\documentclass[12pt]{amsart}
\usepackage{amscd}
\usepackage{amssymb,amsmath}
\usepackage{thmtools}
\usepackage{hyperref}
\usepackage{mathrsfs}
\usepackage{graphicx}
\usepackage{enumitem}
\usepackage{romannum}
\usepackage{mathtools}
\usepackage[utf8]{inputenc}
\usepackage{listings}
\usepackage[capitalise]{cleveref}

\usepackage{lipsum,eso-pic,xcolor}
\usepackage{lineno}

\usepackage{tikz}
\usetikzlibrary{calc}
\usetikzlibrary{arrows,decorations.pathmorphing,backgrounds,positioning,fit}
\usetikzlibrary{positioning}
\hypersetup{
    colorlinks=true,
    linkcolor=blue,
    hypertexnames=false,
}

\usetikzlibrary{trees}
\usetikzlibrary{arrows}

\def\supp{\operatorname{supp}}

\def\reg{\operatorname{reg}}
\def\deg{\operatorname{deg}}

\def\max{\operatorname{max}}

\DeclarePairedDelimiter\ev{\langle}{\rangle}

\newcommand{\m}{\mathfrak m}

\newcommand{\N}{\mathcal{N}}

\newcommand{\G}{\mathcal{G}}

\newcommand{\PP}{\mathcal{P}}

\newtheorem{proposition}{Proposition}[section]
\newtheorem{lemma}[proposition]{Lemma}

\newtheorem{theorem}[proposition]{Theorem}
\newtheorem{definition}[proposition]{Definition}
\newtheorem{remark}[proposition]{Remark}
\newtheorem{notation}[proposition]{Notation}
\newtheorem{example}[proposition]{Example}
\newtheorem{question}[proposition]{Question}

\newcommand{\basegraph}[2]{%
\begin{tikzpicture}[scale=1.15]
  \coordinate (v1) at (0,1.2);
  \coordinate (v2) at (1.8,1.2);
  \coordinate (v3) at (1.8,0);
  \coordinate (v4) at (0,0);
 \foreach \i/\j in {#1}{
    \ifnum\i=1
      \ifnum\j=2
        \draw[red, very thick] (v\i) -- (v\j);
      \else
        \draw (v\i) -- (v\j);
      \fi
    \else
      \draw (v\i) -- (v\j);
    \fi
  }
  \foreach \i in {1,2,3,4} \fill (v\i) circle (2.8pt);
  \node[above left]  at (v1) {$x_1$};
  \node[above right] at (v2) {$x_2$};
  \node[below right] at (v3) {$x_3$};
  \node[below left]  at (v4) {$x_4$};
  \node at (0.9,-0.75) {#2};
\end{tikzpicture}}

\advance\headheight1.15pt

\newtheorem{innercustomthm}{Theorem}
\newenvironment{customthm}[1]
  {\renewcommand\theinnercustomthm{#1}\innercustomthm\itshape}
  {\endinnercustomthm}

\begin{document}
	
	\pagenumbering{arabic}
	
	\title[Componentwise Linearity of Support-Two Monomial Ideals]{Componentwise linearity, Fr\"oberg's analogue, and classification of linear support-two monomial ideals} 
	
	\author[Manohar Kumar]{Manohar Kumar$^a$}
	\address{$^a$Department of Mathematics, Indian Institute of Technology Madras, Chennai, Tamil Nadu, INDIA - 600036.}
	\email{manhar349@gmail.com}

	\author[Kamalesh Saha]{Kamalesh Saha$^b$}
	\address{$^b$Department of Mathematics, SRM University-AP, Amaravati 522240, Andhra Pradesh, India.}
	\email{kamalesh.saha44@gmail.com; kamalesh.s@srmap.edu.in}
	
	%\thanks{$^b$ Corresponding Author}
	
	\thanks{AMS Classification 2020: 13D02, 13F55, 05E40, 05C25}
    \thanks{Keywords: componentwise linear, linear resolution, support-two monomial ideals, co-chordal graphs}

	%\keywords{Regularity of }

	\begin{abstract}
		In this paper, we investigate the componentwise linear property of support-two monomial ideals. Our first main result shows that if $I$ is a support-two monomial ideal, then its underlying simple graph $G_I$ is co-chordal; equivalently, by Fr\"{o}berg's theorem, $\sqrt{I}$ admits a linear resolution. This phenomenon is quite rare for general monomial ideals. In fact, there exist monomial ideals with linear resolutions whose radicals fail to have linear resolutions, even when the radical is the edge ideal of a graph.\par 

For any monomial ideal $I$, one always has $\mu(I)\geq \mu(\sqrt{I})$. In the literature, support-two monomial ideals satisfying $\mu(I)=\mu(\sqrt{I})$ are of special interest, as they include edge ideals of simple graphs, weighted oriented graphs, edge-weighted graphs, and vertex-weighted graphs. We refer to such ideals as \emph{minimal support-two monomial ideals}. We explicitly characterize all minimal support-two monomial ideals, as well as their powers, that admit linear resolutions.  Next, we classify the linearity of non-minimal ones. Consequently, we obtain a complete classification of linear support-two monomial ideals, which shows that the property of being linear does not depend on the characteristics of the base field for support-two monomial ideals. 
\end{abstract}

\maketitle
    
\section{Introduction}
The study of ideals having linear resolutions has received significant attention over the years because of their importance in areas such as commutative algebra, algebraic geometry, combinatorial topology, and combinatorics. These ideals are well-behaved, admit several algebraic interpretations, and their minimal free resolutions take a very special form. A remarkable result of Eagon and Reiner states that the Stanley-Reisner ideal of a simplicial complex is Cohen-Macaulay if and only if its Alexander dual has a linear resolution \cite{ev98}. Inspired by the significance of this class of ideals, Herzog and Hibi \cite{hh99} introduced in 1999 the next best possible class beyond ideals with linear resolution, known as componentwise linear ideals. Let $I \subseteq R=\mathbb{K}[x_1,\ldots,x_n]=\bigoplus_{d\geq0} R_d$ be a graded ideal over a field $\mathbb{K}$. For each integer $d\geq0$, denote by $I_{\langle d\rangle}$ the ideal generated by all homogeneous elements of degree $d$ in $I$, that is, $I_{\langle d\rangle}=(I_d)$. The ideal $I$ is said to be \emph{componentwise linear} if, for every $d\geq0$, the ideal $I_{\langle d\rangle}$ admits a linear resolution. Thus, when $I$ is generated in a single degree, the notion of being componentwise linear
coincides with having a linear resolution.\par 

Componentwise linear ideals have attracted considerable interest in commutative algebra because of their nice properties and algebraic interpretation. Herzog and Hibi \cite{hh99} strengthened the Eagon–Reiner theorem by showing that a Stanley-Reisner ideal $I$ is sequentially Cohen-Macaulay if and only if its Alexander dual $I^{\vee}$ is componentwise linear. One of the primary motivations to study componentwise linear ideals comes from the literature on Koszul algebras and Koszul modules. Specifically, Herzog-Hibi \cite{hh99} and Iyengar-R\"omer \cite{ir09} proved that $R/I$ is Koszul if and only if $I$ is componentwise linear. Another important result of Herzog, Reiner, and Welker \cite{hrw99} states that if a graded ideal $I \subseteq R$ is componentwise linear and contains no linear forms, then $I$ is Golod. Thanks to Harima and Watanabe \cite{hw15}, the class of completely $\mathfrak{m}$-full ideals coincides with the class of componentwise linear ideals in a polynomial ring over an infinite field. Some references on the theory of componentwise linearity  \cite{ds25}, \cite{hv22}, \cite{hh99}, \cite{hhm22}, \cite{nr15}, \cite{nms21}.\par

In this paper, we study the componentwise linearity of support-two monomial ideals. The objects of our study are monomial ideals whose minimal generating sets consist of monomials supported on exactly two variables. More precisely, a monomial ideal $I \subseteq R$ is said to be a \emph{support-two monomial ideal} if every monomial in $\G(I)$ is of the form $x_i^{a_i} x_j^{b_j}$ for two distinct $i,j\in[n]$ and $a_i, b_j \ge 1$, where $\G(I)$ denotes the minimal set of monomial generators of $I$. A classical and well–studied subclass of support-two monomial ideals arises from graphs, namely, edge ideals of graphs. Let $G$ be a simple graph with vertex set $V(G) = \{x_1, \ldots, x_n\}$ and edge set $E(G)$. The \emph{edge ideal} of $G$, denoted by $I(G)$, is defined as $I(G) := ( x_i x_j \mid \{x_i, x_j\} \in E(G))$. Hence, one can see that if $I$ is a support-two monomial ideal, then its radical $\sqrt{I}$ is the edge ideal of a simple graph, which we denote by $G_{I}$ and refer to as the \emph{base graph} of $I$. In other words, $\sqrt{I}=I(G_{I})$. Moreover, for a monomial ideal $I$, we always have $\mu(I)\geq \mu(\sqrt{I})$, where $\mu(I)=|\G(I)|$. We call $I$ a \textit{minimal} support-two monomial ideal if $I$ is a support-two monomial ideal with $\mu(I)= \mu(\sqrt{I})$.\par

The study of edge ideals of graphs, introduced by Villarreal \cite{v90} in 1990, marked an important milestone in the interaction between commutative algebra and graph theory. Since then, edge ideals have been extensively investigated and have become a central topic of research due to their rich algebraic structure and numerous applications. Over time, several classes of ideals associated with graphs have been introduced as generalizations of edge ideals, motivated by their appearance in other areas of Mathematics. Examples include edge ideals of weighted oriented graphs, which arise naturally in the study of Reed-Muller-type codes (see \cite{hlmrv19, prt19}), edge ideals of edge-weighted graphs (see \cite{ps13}), and edge ideals of vertex-weighted graphs (see \cite{k20}). It is worth noting that all these ideals belong to the class of minimal support-two monomial ideals. Consequently, support-two monomial ideals form a large and significant class within the theory of monomial ideals. However, almost nothing is known about these ideals in full generality, and many basic questions remain open. A systematic study of this class could lead to a rich theory unifying several existing classes of monomial edge ideals. As far as we know, the literature contains only one general study on support-two monomial ideals, in which the equality of ordinary and symbolic powers has been investigated for some subclasses (see \cite{bdk25}).\par 

One of the most celebrated results in the literature of edge ideals of simple graphs is Fr\"{o}berg's theorem \cite{f90}, which states that an edge ideal $I(G)$ has a linear resolution if and only if the complement of $G$ is chordal. In \cite{bds23}, the authors classified all weighted-oriented edge ideals that admit linear resolutions. In addition, \cite{kns25} provides some necessary and sufficient conditions for such ideals to be componentwise linear. In contrast to these results and Fr\"oberg's theorem, two fundamental questions that arise in a much broader sense are the following.
\begin{question}\label{Q1}
    Classify all types of monomial edge ideals associated with graphs that admit linear resolutions. More generally, classify minimal support-two monomial ideals with linear resolutions.
\end{question}
\noindent A combinatorial classification of componentwise linear monomial edge ideals is considerably more difficult. In fact, the property of being componentwise linear may depend on the base field. However, inspired by Fr\"oberg's theorem, it is natural to seek necessary conditions for the componentwise linearity of support-two monomial ideals in terms of underlying simple graphs.
\begin{question}\label{Q2}
     Let $I$ be a graph-associated monomial edge ideal that is componentwise linear. More generally, suppose $I$ is a componentwise linear support-two monomial ideal (not necessarily minimal). What can be said about the underlying simple graph $G_{I}$?
\end{question}
\par 
In this paper, we completely answer \Cref{Q1} and \ref{Q2}. The following theorem constitutes the key and somewhat surprising result of this article, establishing a connection between the componentwise linearity of support-two monomial ideals and the chordality of the complements of their underlying graphs. 

\begin{customthm}{\ref{thmcochordal}}
    Let $I\subseteq R$ be a support-two monomial ideal. If $I$ is componentwise linear, then the complement of $G_I$ is chordal (equivalently, $\sqrt{I}$ has a linear resolution).
\end{customthm}
\noindent Let us explain why the above result is both surprising and important. For a componentwise linear monomial ideal $I$, it is quite rare that the radical $\sqrt{I}$ is again componentwise linear. In fact, there are many examples where $I$ even has a linear resolution, $\sqrt{I}$ is the edge ideal of a simple graph, yet $\sqrt{I}$ fails to be componentwise linear. For instance, if we take $I = I(C_5)^k$ for some $k \geq 2$, then $I$ has a linear resolution by \cite[Theorem 5.2]{bht15}, whereas $\sqrt{I} = I(C_5)$ does not. However, \Cref{thmcochordal} together with Fr\"oberg's theorem shows that if $I$ is a componentwise linear support-two monomial ideal, then \(\sqrt{I}\) admits a linear resolution. The proof of \Cref{thmcochordal} is quite technical and relies on a careful interplay between algebraic and combinatorial tools, including componentwise linearity, the linearity of edge ideals, polarization, and the exponents of variables in the generators.

Our next goal is to classify all support-two monomial ideals that have linear resolutions. We divide the classification into two cases, considering minimal ideals first and then non-minimal ideals. The latter case is more challenging and relies on the classification obtained in the minimal case. Our second main result not only provides a complete characterization of minimal support-two monomial ideals with linear resolutions, but also characterizes when their powers have linear resolutions. More generally, we prove the following.

\begin{customthm}{\ref{equivalent}}
Let $I\subseteq R$ be a minimal support-two monomial ideal with $\alpha(I) > 2$. Then the following conditions are equivalent:
\begin{enumerate}
    \item $I$ has a linear resolution.
    \item $I=(x_i x_j^{a_j} \mid i\in [n]\setminus \{j\})$ for some $1 \le j \le n$ and $a_j>1$.
    \item $I^k$ has a linear resolution for all $k \ge 1$.
    \item $I^k$ has a linear resolution for some $k \ge 1$.
    \item $I^k$ has linear quotients for all $k \ge 1$.
\end{enumerate}
\end{customthm}
\noindent Here, $\alpha(I)$ denotes the initial degree of $I$. The case $\alpha(I)=2$ corresponds to edge ideals of simple graphs, and the conditions under which $I$ and all its powers have linear resolutions and linear quotients property are characterized in \cite[Theorem 1]{f90}, \cite[Theorem 3.2]{hhz104} and \cite[Theorem 2.6]{a15}. However, the classification of edge ideals whose certain powers have linear resolutions is still open. Finally, using the structure of minimal support-two monomial ideals having linear resolutions, together with other techniques, we give an explicit classification of non-minimal linear support-two monomial ideals as follows:
\begin{customthm}{\ref{thm:linearresolution}}
Let $I$ be a non-minimal support-two monomial ideal, i.e. $\mu(I)>\mu(\sqrt{I})$. Then $I$ has a linear resolution if and only if $I$ is one of the following ideals (up to the permutation of variables)
\[
\begin{aligned}
\textup{(1)}\quad
I &=
(x_1^{a_1}x_2^{b_1}, \ldots, x_1^{a_1-k}x_2^{b_1+k});\\[1ex]
\textup{(2)}\quad
I &=
(x_1^{a_1}x_2^{b_1}, \ldots, x_1x_2^{b_1+k},
x_2^{b_1+k}x_3, \ldots, x_2^{b_1+k}x_n);\\[1ex]
\textup{(3)}\quad
I &=
(x_1^{k}x_2,\ldots,x_1x_2^{k},
x_3x_1^{k},\ldots,x_sx_1^{k},
x_{s+1}x_2^{k},\ldots,x_nx_2^{k});\\[1ex]
\textup{(4)}\quad
I &=
(x_1^{k}x_2,\ldots,x_1x_2^{k},
x_3x_1^{k},\ldots,x_nx_1^{k},
x_{3}x_2^{k},\ldots,x_nx_2^{k});\\[1ex]
\textup{(5)}\quad
I &=
(x_1^{k}x_2,\ldots,x_1x_2^{k},
x_3x_1^{k},\ldots,x_px_1^{k},
x_{p+1}x_1^{k},\ldots,x_nx_1^{k},\\
&\qquad\qquad\qquad\qquad\qquad\qquad\qquad\qquad\quad x_{p+1}x_2^{k},\ldots,x_nx_2^{k});\\[1ex]
\textup{(6)}\quad
I &=
(x_1^{k}x_2,\ldots,x_1x_2^{k},
x_3x_1^{k},\ldots,x_px_1^{k},
x_{p+1}x_1^{k},\ldots,x_sx_1^{k},\\
&\qquad\qquad\qquad\qquad\qquad
x_{p+1}x_2^{k},\ldots,x_sx_2^{k},
x_{s+1}x_2^{k},\ldots,x_nx_2^{k}).
\end{aligned}
\]
\end{customthm}
The paper is organized as follows. In \Cref{secpreli}, we recall the necessary definitions, basic notions, and known results used throughout the paper. In \Cref{seccochordal}, we first establish several propositions and technical lemmas and then prove our main result \Cref{thmcochordal}. In \Cref{powers-minimal-support}, we again build the required setup through lemmas and propositions before proving our main result, and conclude by giving a complete characterization of the linear resolutions of powers of minimal support-two monomial ideals in \Cref{equivalent}. We observe from \Cref{equivalent} that, for a minimal support-two monomial ideal to admit a linear resolution, its base graph must be a star graph. In \Cref{sec5}, we establish a series of lemmas, and using these lemmas together with \Cref{equivalent}, we give a structural classification of non-minimal support-two monomial ideals admitting linear resolutions; see \Cref{thm:linearresolution}. Hence, we obtain a complete classification of linear support-two monomial ideals by Fr\"oberg's theorem, \Cref{equivalent}, and \Cref{thm:linearresolution}. 
%We conclude the paper by posing an open question concerning the linearity of support-two monomial ideals that are not minimal. 

\section{Preliminaries}\label{secpreli}
In this section, we recall some necessary prerequisites, which are used to describe our work and establish our results.\par 

Let $G=(V(G), E(G))$ be a simple graph. For a vertex $x\in V(G)$, the \textit{neighbourhood} of $x$ in $G$, denoted by $\N_{G}(x)$, is defined as $\mathcal{N}_{G}(x)= \{y \in V(G) \mid \{x,y\} \in E(G)\}$. For a subset $A\subseteq V(G)$, we denote the \textit{induced} subgraph of $G$ on $A$ by $G[A]$, in particular, $G[A]$ is a graph with $V(G[A])=A$ and $E(G[A])=\{e\in E(G)\mid e\subseteq A\}$. Again, by $G\setminus A$, we mean the induced subgraph $G[V(G)\setminus A]$. For simplicity of notation, we write $G\setminus x$ to denote the graph $G\setminus \{x\}$ for any vertex $x\in V(G)$. The \textit{complement} of a simple graph $G$, denoted by $G^c$, is a simple graph such that $V(G^c)=V(G)$ and $E(G^c)=\{ \{u,v\}\subseteq V(G)\mid \{u,v\}\not\in E(G)\}$.

\begin{definition}{\rm
A \textit{cycle} of length $n$ (or $n$-\textit{cycle}), denoted by $C_n$, is a connected graph on the vertex set $V(C_n)=\{x_1,\ldots,x_n\}$ such that $E(C_n)=\{\{x_{i},x_{i+1}\}\mid 1\leq i\leq n \text{ and } x_{n+1}=x_1\}$. A \textit{path} of length $n-1$ (or a \textit{path} on $n$ vertices), denoted by $P_n$, is a connected graph with $V(C_n)=\{x_1,\ldots,x_n\}$ and $E(C_n)=\{\{x_{i},x_{i+1}\}\mid 1\leq i\leq n-1\}$. A simple graph $G$ is called {\it chordal} if $G$ has no cycle of length greater than three as an induced subgraph. A graph $G$ is called {\it co-chordal} if $G^c$ is chordal. A \textit{complete} graph on $n$ vertices, denoted by $K_n$, is defined as $V(K_n)=\{x_1,\ldots,x_n\}$ and $E(K_n)=\{\{x_i,x_j\}\mid 1\leq i<j\leq n\}$.
}
\end{definition}

\begin{definition}{\rm Let $I$ be a graded ideal of $R$ and $\m$ is the unique homogeneous maximal ideal of $R$. The \textit{Castelnuovo-Mumford regularity} (or simply \textit{regularity}) of $I$, denoted by $\reg(I)$, is defined as follows 
 \begin{align*}
 \reg(I): &=  \max \{j - i \mid \beta_{i,j}(I) \neq 0\} \\
         &= \max\{j+i \mid H_{\m}^i(I)_j \neq 0\},   
 \end{align*}
  where $\beta_{i,j}(I)$ is the $(i,j)^{th}$ graded Betti number of $I$ and $H_{\m}^i(I)_j$ denotes the $j^{th}$ graded component of the $i^{t h}$ local cohomology module $H_{\m}^i(I)$.
  }
\end{definition}
 Let us recall the definitions of linear resolution and componentwise linear ideals.
\begin{definition}{\rm
Let $I$ be a graded ideal in $R$. If the minimal free resolution of $I$ is given by
$$0 \to R(-(d+p))^{\beta_p} \to \cdots \to R(-(d+1))^{\beta_1} \to R(-d)^{\beta_0} \to I \to 0,$$
then we say $I$ has a \textit{linear} resolution. In other words, $I$ is said to be linear if $\reg(I)=\alpha(I)$, where $\alpha(I)$ denotes the minimum degree of a homogeneous element belongs to $I$. Note that if $I$ has a linear resolution, then $I$ is equigenerated (generated in a single degree).\par 

A graded ideal $I$ of $R$ is said to be \textit{componentwise linear} if for each $d\geq 0$, the ideal generated by all homogeneous element of degree $d$, denoted by $I_{\ev{d}}$, has a linear resolution over $R$, i.e., $\reg(I_{\langle d \rangle})=d$ for all $d\geq 0$. Note that if $I$ is equigenerated, then linearity and componentwise linearity are equivalent.
   }
\end{definition}
 
\begin{definition}{\rm
    A monomial ideal $I \subseteq R$ has \textit{linear quotient} property if there exists an order $ u_1 < \cdots < u_m$ on the minimal monomial generating set $\mathcal{G}(I)$ of $I$ such that the colon ideal $((u_1, \ldots, u_{i-1}) : u_i)$ is generated by a subset of variables, for $i = 2, \ldots, m$. By \cite[Theorem 8.2.15]{hh11}, if $I$ has linear quotients, then $I$ is componentwise linear.
    }
\end{definition}

Now, let us state the well-known Fr\"{o}berg's theorem \cite[Theorem 1]{f90}, which has been used frequently in this paper.\medskip

\noindent{\bf Fr\"{o}berg's Theorem.} Let $G$ be a simple graph and $I(G)$ be its edge ideal. Then $I(G)$ has a linear resolution if and only if $G^c$ is chordal.
\medskip

For a monomial $u\in R$, define its {\it support} as $\supp(u):= \{x_i \mid x_i \text{ divides } u\}$. We denote the degree of $u$ by $\deg(u)$.

\begin{definition}{\rm
Let $m=x_1^{a_1}\cdots x_n^{a_n}$ be a monomial in $R$. Then the {\it polarization} of $m$ is defined to be the square-free monomial
\begin{equation*}
    \PP(m)=x_{11}x_{12} \cdots x_{1a_1} \cdots x_{n1}x_{n2}\cdots x_{n a_n}
\end{equation*}
in the polynomial ring $\mathbb{K}[x_{i j} \mid 1 \leq i \leq n, 1 \leq j \leq a_i].$ Let $I \subseteq R $ be a monomial ideal with $\mathcal{G}(I)=\{m_1, \ldots, m_u\}$ and $m_i=\prod_{j=1}^{n}x_j^{a_{ij}}$, where each $a_{ij} \geq 0$ for $i\in [u]=\{1, \ldots, u\}$. Then the {\it polarization} of $I$, denoted by $I^{\PP}$, is defined as 
\begin{equation*}
    I^{\PP}=(\PP(m_1), \ldots, \PP(m_u)),
\end{equation*}
which is a square-free monomial ideal in the polynomial ring $R^{\PP}=\mathbb{K}[x_{j1}, \ldots, x_{ja_j} \mid j=1, \ldots, n ]$, where $a_j=\max\{a_{i j} \mid i\in [u] \}$ for any $1 \leq j \leq n.$
}
\end{definition}

\begin{lemma}\cite[Proposition $1$]{nms21}\label{compntpolariz}
   Let $I$ be a monomial ideal in $R=\mathbb{K}[x_1,\ldots,x_n]$. Then $I$ is a componentwise linear ideal if and only if $I^{\PP}$ is a componentwise linear ideal. 
\end{lemma}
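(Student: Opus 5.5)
The approach is to reduce both directions to a statement about the ideals generated by minimal generators of bounded degree, since these are exactly the objects that commute with polarization. For $d\geq 0$ write $I_{\le d}$ for the ideal generated by the elements of $\G(I)$ of degree at most $d$. Polarization sends $\G(I)$ bijectively onto $\G(I^{\PP})$ and preserves degrees. Hence
\[
(I_{\le d})^{\PP}=(I^{\PP})_{\le d} \quad\text{for every } d,
\]
where both sides are viewed in a common polynomial ring. Adding variables that do not occur in the generators does not change graded Betti numbers, so this is harmless. By the classical fact that polarization preserves graded Betti numbers, $\beta_{i,j}(I_{\le d})=\beta_{i,j}((I^{\PP})_{\le d})$ for all $i,j,d$.

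The second step is to express componentwise linearity purely in terms of the ideals $I_{\le d}$. First, $I_{\langle d\rangle}=I_{\le d}\cap\m^d$. Indeed, any monomial generator of the intersection of degree $>d$ is divisible by a degree-$d$ monomial lying in $I_{\le d}$. Then I would show that a monomial ideal $I$ is componentwise linear if and only if $\reg(I_{\le d})\le d$ for every $d$ with $I_{\le d}\neq 0$.

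For the forward direction, I would induct on $d$ using the short exact sequences relating $I_{\le d}$, $I_{\le d-1}$ and $I_{\langle d\rangle}$. Concretely, $I_{\le d}$ and $I_{\langle d\rangle}$ agree in degrees $\ge d$, so the truncation $(I_{\le d})_{\ge d}=I_{\langle d\rangle}$ has regularity $d$. Combined with $\reg(I_{\le d-1})\le d-1$, this gives the bound. For the converse, I would use the standard fact that $\reg(J_{\ge d})\le \max\{\reg J, d\}$. This yields $\reg(I_{\langle d\rangle})=\reg((I_{\le d})_{\ge d})\le d$, and hence $I_{\langle d\rangle}$ has a linear resolution. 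This holds for $d$ at least the initial degree of $I$; below that $I_{\langle d\rangle}=0$.

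With this criterion, the lemma follows at once. The condition "$\reg(I_{\le d})\le d$ for all $d$" depends only on graded Betti numbers, and these are the same for $I_{\le d}$ and $(I^{\PP})_{\le d}=(I_{\le d})^{\PP}$. So the condition holds for $I$ exactly when it holds for $I^{\PP}$.

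The main obstacle is the middle step: the equivalence between componentwise linearity and $\reg(I_{\le d})\le d$ for all $d$. The direct route through $I_{\langle d\rangle}$ fails, because $(I_{\langle d\rangle})^{\PP}$ is not $(I^{\PP})_{\langle d\rangle}$: polarizing degree-$d$ multiples of lower generators uses different new variables. The hardest part is the forward implication, namely controlling $\reg(I_{\le d})$ from the linearity of all the $I_{\langle e\rangle}$ with $e\le d$. I would try the exact sequence $0\to I_{\le d-1}\cap I_{\langle d\rangle}\to I_{\le d-1}\oplus I_{\langle d\rangle}\to I_{\le d}\to 0$. Here $I_{\le d-1}\cap I_{\langle d\rangle}$ should equal $\m\, I_{\langle d-1\rangle}$ in degree $d$ and above. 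I would then use the Herzog–Hibi fact that $\m J$ has a linear resolution whenever $J$ does.
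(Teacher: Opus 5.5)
Your proof is correct. The paper itself gives no argument for this lemma; it is quoted from \cite[Proposition 1]{nms21}, so there is no internal proof to match, and yours is a genuinely independent, self-contained route. Your key move is to replace the components $I_{\langle d\rangle}$ by the ideals $I_{\le d}$. As you rightly note, $I_{\langle d\rangle}$ does not commute with polarization, whereas $I_{\le d}$ does, since $\PP$ maps $\G(I)$ bijectively and degree-preservingly onto $\G(I^{\PP})$. The criterion ``$\reg(I_{\le d})\le d$ for all $d\ge\alpha(I)$'' then turns componentwise linearity into a condition on graded Betti numbers, which polarization preserves. All the steps check out.

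The step you flag as the main obstacle, the forward implication, is actually the easy one. Since $I_{\le d}$ is generated in degrees $\le d$ and agrees with $I$ in degrees $\le d$, you get $(I_{\le d})_{\ge d}=I_{\langle d\rangle}$ exactly. The quotient $I_{\le d}/I_{\langle d\rangle}$ has finite length and is concentrated in degrees $<d$, so its regularity is at most $d-1$. Then \Cref{lem:regularity-lemma}(1) gives $\reg(I_{\le d})\le\max\{d,d-1\}=d$ at once. This needs no induction on $d$, no Mayer--Vietoris sequence, and no fact about $\m J$.

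Your alternative route also works. In fact $I_{\le d-1}\cap I_{\langle d\rangle}=\m I_{\langle d-1\rangle}$ holds exactly, not just in degrees $\ge d$. Moreover, the linearity of $\m J$ for linear $J$ is itself your truncation bound applied to $\m J=J_{\ge \alpha(J)+1}$.

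Compared with the citation, your route is elementary and makes the structural reason for the lemma visible. Componentwise linearity is determined by the graded Betti numbers of the subideals $I_{\le d}$. It is therefore invariant under any Betti-number-preserving operation that commutes with passing to subsets of minimal generators, with polarization as one instance.
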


\begin{lemma}\cite[Corollary 8.2.14]{hh11}\label{maximum}
 Let $I \subseteq R$ be a componentwise linear ideal. Then $\reg(I)=d(I)$, where $d(I)$ is the highest degree of a minimal generator $I$.   
\end{lemma}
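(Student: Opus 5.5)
This is \cite[Corollary 8.2.14]{hh11}, but here is how I would prove it directly from the definitions, with $\operatorname{Tor}$ against $\mathbb{K}$. Write $d=d(I)$ and $\beta_{i,j}(I)=\dim_{\mathbb{K}}\operatorname{Tor}_i(I,\mathbb{K})_j$. The plan has two parts.

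\emph{Lower bound.} Since $I$ has a minimal generator of degree $d$, we have $\beta_{0,d}(I)\neq 0$. Hence $\reg(I)\geq d$.

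\emph{Upper bound.} It suffices to show $\operatorname{Tor}_i(I,\mathbb{K})_{i+j}=0$ for every $i\geq 0$ and every $j>d$.

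First, a structural observation. Every minimal generator of $I$ has degree $\leq d$, so every homogeneous element of $I$ of degree $e\geq d$ is an $R$-combination of elements of $I_d$. Thus, for every $e\geq d$,
\[
I_{\geq e}=I_{\langle e\rangle}.
\]
By componentwise linearity, $I_{\langle e\rangle}$ has a linear resolution. So $\operatorname{Tor}_i(I_{\geq e},\mathbb{K})$ is concentrated in the single degree $i+e$ for all $e\geq d$.

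Now fix $j>d$ and put $e=j-1\geq d$. Consider the short exact sequence
\[
0\to I_{\geq e}\to I\to I/I_{\geq e}\to 0
\]
and the associated long exact sequence of $\operatorname{Tor}(-,\mathbb{K})$ in internal degree $i+j$:
\[
\operatorname{Tor}_i(I_{\geq e},\mathbb{K})_{i+j}\to \operatorname{Tor}_i(I,\mathbb{K})_{i+j}\to \operatorname{Tor}_i(I/I_{\geq e},\mathbb{K})_{i+j}.
\]
I will show both outer terms vanish.

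\begin{enumerate}
\item The left term vanishes because $\operatorname{Tor}_i(I_{\geq e},\mathbb{K})$ lives only in degree $i+e=i+j-1\neq i+j$.
\item For the right term, compute $\operatorname{Tor}$ with the Koszul complex $K_\bullet\otimes M$, where $M=I/I_{\geq e}$. The degree-$l$ part of $K_i\otimes M$ is a sum of copies of $M_{l-i}$. Since $M$ is concentrated in degrees $<e$, this part is zero whenever $l-i\geq e$. Here $l=i+j$, so $l-i=j>e$. Hence $\operatorname{Tor}_i(M,\mathbb{K})_{i+j}=0$.
\end{enumerate}

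By exactness, $\operatorname{Tor}_i(I,\mathbb{K})_{i+j}=0$ for all $i$ and all $j>d$. Therefore $\reg(I)\leq d$, and combined with the lower bound, $\reg(I)=d(I)$.

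The only step that needs care is the identification $I_{\geq e}=I_{\langle e\rangle}$ for $e\geq d$. This is exactly what lets us apply the linearity hypothesis to the truncation $I_{\geq e}$. The rest is the degree bookkeeping in the Koszul complex, which I expect to be routine.
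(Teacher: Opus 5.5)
Your proof is correct. It also takes a genuinely different route from the paper, because the paper gives no argument of its own. It only quotes \cite[Corollary 8.2.14]{hh11}, where the statement is a consequence of the description of the graded Betti numbers of a componentwise linear ideal in terms of those of its components $I_{\langle j\rangle}$. Your two ingredients both check out:
\begin{enumerate}
\item the identification $I_{\ge e}=I_{\langle e\rangle}$ for $e\ge d(I)$, because every generator of degree below $d(I)$ can be multiplied up into $I_{d(I)}$;
\item the vanishing of $\operatorname{Tor}_i(I/I_{\ge e},\mathbb{K})_{i+j}$ for $j\ge e$, read off from the Koszul complex.
\end{enumerate}

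Your choice $e=j-1$ is harmless but unnecessary. Taking $e=d(I)$ for every $j>d(I)$ works verbatim: $\operatorname{Tor}_i(I_{\ge d},\mathbb{K})$ lives only in degree $i+d$, and $(I/I_{\ge d})_j=0$ for $j\ge d$. So your argument uses only that the single component $I_{\langle d(I)\rangle}$ has a linear resolution, and it proves the sharper statement that $\reg(I)=d(I)$ as soon as that one component is linear.

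Your approach buys a short, self-contained proof from a weaker hypothesis. That is exactly the direction the paper needs, since the lemma is only ever used to get $\reg(I)\le d(I)$ from componentwise linearity and then reach a contradiction. The textbook route needs all components to be linear, but in return yields the whole graded Betti table of $I$, with the regularity statement as a corollary.
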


The following lemma is standard in the literature and will be used without explicit reference.

\begin{lemma} \label{reg-sum}
Let $S_1=\mathbb{K}[x_1,\ldots,x_m]$ and $S_2=\mathbb{K}[x_{m+1},\ldots,x_n]$ be two polynomial rings, $I_1 \subset S_1$ and $I_2 \subset S_2$ be two non-zero graded ideals. Then $$\reg(I+J)=\reg(I)+\reg(J)-1.$$
Again, for any homogeneous element $f\in R$ and a homogeneous ideal $I\subseteq R$, we have
$$\reg(fI)=\deg(f)+\reg(I).$$
\end{lemma}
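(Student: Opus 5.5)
The first formula will follow from the fact that the minimal free resolution of a sum of ideals in disjoint sets of variables is the tensor product of the two individual resolutions. I will read $I$ and $J$ in the statement as $I_1R$ and $I_2R$.

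\textbf{Step 1 (reduction to $R$).} The extension $S_1\to R$ is flat, and $R$ is free over $S_1$. So tensoring a minimal graded free resolution of $S_1/I_1$ over $S_1$ with $R$ gives a minimal graded free resolution of $R/I_1R$ with the same graded Betti numbers. Hence $\reg(I_1R)=\reg(I_1)$, and likewise for $I_2$. It is also convenient to pass to quotients using $\reg(I)=\reg(R/I)+1$, valid for any nonzero proper graded ideal. With this, the claimed formula is equivalent to
\[
\reg\bigl(R/(I+J)\bigr)=\reg(S_1/I_1)+\reg(S_2/I_2).
\]

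\textbf{Step 2 (tensor product of resolutions).} Let $F_\bullet\to S_1/I_1$ and $G_\bullet\to S_2/I_2$ be minimal graded free resolutions. Since $R\cong S_1\otimes_{\mathbb K}S_2$, the complex $F_\bullet\otimes_{\mathbb K}G_\bullet$ is a complex of free graded $R$-modules.
\begin{itemize}
\item \emph{Exactness.} Every module involved is a $\mathbb K$-vector space, so the Künneth formula over the field $\mathbb K$ applies. The homology of $F_\bullet\otimes_{\mathbb K}G_\bullet$ is therefore $H(F)\otimes_{\mathbb K}H(G)$. This is concentrated in degree $0$ and equals
\[
S_1/I_1\otimes_{\mathbb K}S_2/I_2\cong R/(I_1R+I_2R).
\]
\item \emph{Minimality.} The differentials of the tensor complex are $d_F\otimes 1\pm 1\otimes d_G$. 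Their matrix entries lie in $\m_{S_1}R+\m_{S_2}R=\m$, so the resolution is minimal.
\end{itemize}
Consequently
\[
\beta_{i,j}\bigl(R/(I+J)\bigr)=\sum_{a+c=i,\;b+d=j}\beta_{a,b}(S_1/I_1)\,\beta_{c,d}(S_2/I_2).
\]
Now maximize $j-i=(b-a)+(d-c)$ over the nonzero terms. The two summands can be maximized independently, which gives the regularity identity of Step 1. The only step needing care is this Künneth and minimality check; everything else is bookkeeping.

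\textbf{Step 3 (the product formula).} Let $f\neq 0$ be homogeneous of degree $d$. Since $R$ is a domain, multiplication by $f$ gives a graded isomorphism $I(-d)\xrightarrow{\ \cdot f\ } fI$. Hence
\[
\beta_{i,j}(fI)=\beta_{i,j-d}(I)\quad\text{for all } i,j,
\]
and therefore $\reg(fI)=\reg(I)+d$.
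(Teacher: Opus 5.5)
Your proof is correct. The paper gives no proof of this lemma to compare against: it calls the lemma ``standard in the literature'' and then uses it without reference.

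Your route, tensoring the two minimal resolutions over $\mathbb{K}$ and applying Künneth, is the standard argument. It proves more than the statement, namely the full product formula for graded Betti numbers of $R/(I_1R+I_2R)$. The regularity identity then follows because all Betti numbers are nonnegative, so a summand is nonzero exactly when both factors are. That is why maximizing $(b-a)+(d-c)$ term by term is legitimate, and it is worth saying explicitly.

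Step 3 is also fine. It needs $f\neq 0$ and $I\neq 0$, which you assume.

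You also use that $I_1$ and $I_2$ are proper when you write $\reg(I)=\reg(R/I)+1$. This hypothesis is really needed, not just convenient. For $I_1=S_1$ and $I_2=(x_{m+1}^2)$ one has $\reg(R)=0\neq 0+2-1$. So ``$I_1\subset S_1$'' must be read as proper containment, and you should state it.

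Your flat base-change remark in Step 1 is useful beyond the lemma itself. It justifies how the paper applies the lemma, for instance in \Cref{thmind4}, where the two ideals live in disjoint sets of variables inside a possibly larger polynomial ring $R$.
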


The following lemma is widely used in the literature and is known as the regularity lemma.
\begin{lemma}\cite[Corollary 20.19]{e95} \label{lem:regularity-lemma}
		Let $0\rightarrow A \rightarrow B \rightarrow C \rightarrow 0$ be a short exact sequence of finitely generated graded $R$-modules. Then 
		\begin{enumerate}
    \item $\reg B \leq \max\{\reg A, \reg C\},$
    
    \item $\reg A \leq \max\{\reg B, \reg C + 1\},$
    
    \item $\reg C \leq \max\{\reg A - 1, \reg B\}.$
\end{enumerate}
\end{lemma}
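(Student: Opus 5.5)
The plan is to work with the Betti-number description of regularity recalled in the definition above, namely $\reg(M)=\max\{j-i \mid \beta_{i,j}(M)\neq 0\}$ with $\beta_{i,j}(M)=\dim_{\mathbb{K}}\operatorname{Tor}_i^R(M,\mathbb{K})_j$. I will not use the local cohomology description.

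\textbf{Step 1: the long exact sequence.} Tensor the short exact sequence $0\to A\to B\to C\to 0$ with $\mathbb{K}=R/\m$. All maps are homogeneous of degree zero, so this gives, for each internal degree $j$, a long exact sequence of finite-dimensional vector spaces
\[
\cdots \to \operatorname{Tor}_{i+1}(C,\mathbb{K})_j \to \operatorname{Tor}_i(A,\mathbb{K})_j \to \operatorname{Tor}_i(B,\mathbb{K})_j \to \operatorname{Tor}_i(C,\mathbb{K})_j \to \operatorname{Tor}_{i-1}(A,\mathbb{K})_j \to \cdots .
\]
The only fact I will use is exactness at each spot: a term is zero whenever both of its neighbours are zero.

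\textbf{Step 2: the three inequalities.} Each part follows by reading off the neighbours of the relevant term.
\begin{enumerate}
\item If $\operatorname{Tor}_i(B,\mathbb{K})_j\neq 0$, then $\operatorname{Tor}_i(A,\mathbb{K})_j\neq0$ or $\operatorname{Tor}_i(C,\mathbb{K})_j\neq0$. Hence $j-i\le \max\{\reg A,\reg C\}$, which gives (1).
\item If $\operatorname{Tor}_i(A,\mathbb{K})_j\neq0$, then $\operatorname{Tor}_i(B,\mathbb{K})_j\neq0$ or $\operatorname{Tor}_{i+1}(C,\mathbb{K})_j\neq0$. In the first case $j-i\le\reg B$. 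In the second case $j-(i+1)\le \reg C$, so $j-i\le \reg C+1$. This gives (2).
\item If $\operatorname{Tor}_i(C,\mathbb{K})_j\neq0$, then $\operatorname{Tor}_i(B,\mathbb{K})_j\neq0$ or $\operatorname{Tor}_{i-1}(A,\mathbb{K})_j\neq0$. In the first case $j-i\le \reg B$. In the second case $j-(i-1)\le\reg A$, so $j-i\le \reg A-1$. This gives (3).
\end{enumerate}
Taking the maximum over all pairs $(i,j)$ with nonvanishing Betti numbers yields the three stated bounds.

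\textbf{Points needing care.} There is no real obstacle; the argument is bookkeeping of homological and internal degree shifts. Two things should be checked. First, the connecting homomorphism lowers the homological index by one but preserves the internal degree $j$; this is exactly what produces the $+1$ in (2) and the $-1$ in (3). Second, the edge cases must be handled correctly: at $i=0$ the term $\operatorname{Tor}_{-1}$ is zero, and when one of the modules is zero its regularity is $-\infty$, so it drops out of the maximum.
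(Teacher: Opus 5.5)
Your proof is correct. Exactness of the degree-$j$ strand of the long exact sequence of $\operatorname{Tor}^R_{\bullet}(-,\mathbb{K})$, where the connecting map lowers the homological index by one and preserves the internal degree, gives all three inequalities with the right shifts, and you handle the edge cases ($i=0$, zero modules) properly. The paper proves nothing here and only cites Eisenbud's Corollary 20.19, and your argument is the standard proof of that fact.
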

\section{Necessary conditions for componentwise linearity}\label{seccochordal}

In this section, we study necessary conditions for a support-two monomial ideal to be componentwise linear. Our main result says that if a support-two monomial ideal $I$ is componentwise linear, then the underlying simple graph $G_I$ is co-chordal, i.e., $\sqrt{I}=I(G_I)$ has a linear resolution. This implication, however, does not hold for arbitrary monomial ideals whose radicals are edge ideals of graphs.

\begin{definition}{\rm
Let us recall support-two monomial ideals and related notions:
\begin{enumerate}
    \item  Let $I\subseteq R$ be a monomial ideal. Then $I$ is said to be a \textit{support-two} monomial ideal if the support of each minimal monomial generator of $I$ has cardinality two.

    \item Let $I$ be a support-two monomial ideal. Then we have $\sqrt{I}=I(G_I)$ for some simple graph $G_I$. We call $G_I$ the \textit{base} graph (or \textit{underlying} graph) of $I$.

    \item Also, note that $\mu(I)\geq \mu(\sqrt{I})=\vert E(G_I)\vert$. Thus, a support-two monomial ideal $I$ with the base graph $G_{I}$ having minimum possible generators is of special interest. We say that $I$ is a \textit{minimal} support-two monomial ideal if $\mu(I)=\mu(\sqrt{I})$.
\end{enumerate}
}    
\end{definition}

\begin{example}{\rm
    Let $I=( x_{1}^{5}x_{2}^2, x_{1}^{3}x_{2}^{4})$. Then $I$ is a support-two monomial ideal with the base graph $P_2$, but $I$ is not minimal as $\mu(I)=2>1=\mu(\sqrt{I})$. Now, if we consider edge ideals of simple graphs, weighted graphs, weighted oriented graphs, and edge-weighted graphs, then all of these form subclasses of minimal support-two monomial ideals.
    }
\end{example}

\begin{notation}\label{notation}{\rm
   Let $I$ be a support-two monomial ideal in $R$. For each $1 \le i \le n$, 
we denote by $w_i(I)$ (resp. $\alpha_i(I)$) the highest (resp. lowest) 
exponent of $x_i$ appearing in $\G(I)$ that is greater than or equal to one.
}
\end{notation}

\begin{proposition}\cite[Proposition 3.1]{kns25}\label{proplin}
	Let $I\subseteq R$ be a monomial ideal. For a variable $x$ in $R$, consider the ideal $I'$ such that $\mathcal{G}(I')=\mathcal{G}(I) \setminus \{u \in \mathcal{G}(I) \mid x \mid u\}$. Then the following holds:
\begin{enumerate}
    \item  If $I$ has a linear resolution, then $I'$ also has a linear resolution.   
\item If $I$ is componentwise linear, then $I'$ is also componentwise linear. 
\end{enumerate}    
\end{proposition}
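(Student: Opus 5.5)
The idea is to compare the multigraded Betti numbers of $I'$ and $I$ directly, using the fact that the multigraded Betti number in a multidegree $\mathbf a\in\mathbb{N}^n$ only depends on the monomials of the ideal dividing $x^{\mathbf a}$. Concretely, for a monomial ideal $J$ I will use the upper Koszul simplicial complex
\[
K^{\mathbf a}(J)=\{F\subseteq \operatorname{supp}(x^{\mathbf a}) \mid x^{\mathbf a}/x_F\in J\},
\]
with $\beta_{i,\mathbf a}(J)=\dim_{\mathbb K}\widetilde H_{i-1}(K^{\mathbf a}(J);\mathbb K)$. One could equally use the restriction of the Taylor complex or the lcm-lattice description. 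In any of these forms, $K^{\mathbf a}(J)$ is determined by which monomials dividing $x^{\mathbf a}$ lie in $J$.

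\textbf{Step 1: comparison of Betti numbers.} Let $x=x_k$ and fix $\mathbf a$ with $a_k=0$. A monomial $m$ dividing $x^{\mathbf a}$ is not divisible by $x$. Hence $m\in I$ if and only if some generator of $I$ not divisible by $x$ divides $m$, that is, if and only if $m\in I'$. Therefore $K^{\mathbf a}(I)=K^{\mathbf a}(I')$, and so $\beta_{i,\mathbf a}(I)=\beta_{i,\mathbf a}(I')$.

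On the other hand, every nonzero multigraded Betti number of $I'$ sits in a multidegree that is an lcm of generators of $I'$, and such an lcm has $a_k=0$. Summing over multidegrees of total degree $j$ gives
\[
\beta_{i,j}(I')\le \beta_{i,j}(I)\quad\text{for all } i,j,
\]
and in particular $\reg(I')\le \reg(I)$. This step is the only place where something nontrivial is used, namely the multidegree-local nature of Betti numbers. I expect it to be the main (though standard) point, so I would cite it rather than reprove it.

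\textbf{Step 2: part (1).} Suppose $I$ has a $d$-linear resolution. Then $I$ is generated in degree $d$, so $I'$ is either $0$ (a trivial case) or also generated in degree $d$. By Step 1, $\reg(I')\le \reg(I)=d=\alpha(I')$. Since always $\reg(I')\ge\alpha(I')$, we get $\reg(I')=\alpha(I')$, so $I'$ has a linear resolution.

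\textbf{Step 3: part (2).} It suffices to show that $(I')_{\langle d\rangle}=(I_{\langle d\rangle})'$ for every $d$, and then apply part (1) to each $I_{\langle d\rangle}$. The minimal monomial generators of $I_{\langle d\rangle}$ are exactly the degree-$d$ monomials of $I$, so $(I_{\langle d\rangle})'$ is generated by the degree-$d$ monomials of $I$ not divisible by $x$. By the divisibility argument of Step 1, these are precisely the degree-$d$ monomials of $I'$, because any monomial of $I'$ lies in $I$ and is divisible by a generator of $I'$. Hence the two ideals coincide. If $I$ is componentwise linear, each $I_{\langle d\rangle}$ is linear, so each $(I')_{\langle d\rangle}$ is linear by (1), and therefore $I'$ is componentwise linear.
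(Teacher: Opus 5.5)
Your Steps 1 and 2 are correct. The upper-Koszul comparison gives $\beta_{i,\mathbf a}(I')=\beta_{i,\mathbf a}(I)$ whenever $a_k=0$, hence $\beta_{i,j}(I')\le\beta_{i,j}(I)$, and part (1) follows; the paper itself gives no proof and only cites \cite{kns25}.

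The gap is in Step 3: the identity $(I')_{\langle d\rangle}=(I_{\langle d\rangle})'$ is false. Take $I=(x^2,y)\subseteq\mathbb{K}[x,y]$ and the variable $x$. Then $I'=(y)$, so $(I')_{\langle 2\rangle}=(xy,y^2)$, while $(I_{\langle 2\rangle})'=(x^2,xy,y^2)'=(y^2)$. Your divisibility argument only shows that the degree-$d$ monomials of $I$ not divisible by $x$ are the degree-$d$ monomials of $I'$ \emph{not divisible by $x$}. But $(I')_{\langle d\rangle}$ also has generators $x^e m$ with $e\ge 1$ and $m$ an $x$-free monomial of $I'$ of degree $d-e$, and these do not appear in $(I_{\langle d\rangle})'$.

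What Step 3 does prove is the following. Let $S'$ be the polynomial ring in the variables other than $x$ and $J=I'\cap S'$, so $I'=JS$. Then $(I_{\langle d\rangle})'=J_{\langle d\rangle}S$, so (1) and flatness of $S'\subseteq S$ show that $J$ is componentwise linear over $S'$. The missing step is the passage from $J$ to $JS$. This is not formal, because $(JS)_{\langle d\rangle}=\sum_{e\ge 0}x^eJ_{\langle d-e\rangle}S$ is in general strictly larger than $J_{\langle d\rangle}S$.

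One way to supply it:
\begin{enumerate}
\item Put $L=(JS)_{\langle d\rangle}$. Since $x$ is a nonzerodivisor on $L$, we have $\beta^S_{i,j}(L)=\beta^{S'}_{i,j}(L/xL)$.
\item Decompose $L=\bigoplus_{e\ge 0}x^eL^{(e)}$ by $x$-degree, where $L^{(e)}=J_{\langle d\rangle}+J_{\langle d-1\rangle}+\cdots+J_{\langle d-e\rangle}\subseteq S'$.
\item For $e\ge 1$, the quotient $L^{(e)}/L^{(e-1)}$ is just the vector space $J_{d-e}$ placed in degree $d-e$. Indeed, in each degree $t>d-e$ the component of $J_{\langle d-e\rangle}$ lies in $J_{\langle t\rangle}$ if $t\le d$, and in $J_{\langle d\rangle}$ if $t>d$; both are contained in $L^{(e-1)}$.
\item Hence $L/xL\cong J_{\langle d\rangle}\oplus\mathbb{K}(-d)^N$ as graded $S'$-modules, with $N=\sum_{e\ge 1}\dim_{\mathbb{K}}J_{d-e}$.
\item Since $\mathbb{K}(-d)$ has a $d$-linear (Koszul) resolution, $L$ is $d$-linear if and only if $J_{\langle d\rangle}$ is, which completes (2).
\end{enumerate}
Alternatively, you may cite the standard fact that componentwise linearity is unchanged by adjoining a variable, for example via the characterization through acyclicity of the linear part of the minimal free resolution.
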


\begin{proposition}\label{thmind4}
	Let $I\subseteq R$ be a support-two monomial ideal. If $I$ is componentwise linear, then $(G_I)^c$ has no induced $4$-cycle.  
	\end{proposition}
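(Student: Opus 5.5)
The plan is to reduce to an ideal in four variables whose base graph is two disjoint edges, and then show that such an ideal cannot be componentwise linear.

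Suppose $(G_I)^c$ contains an induced $4$-cycle. Equivalently, $G_I$ contains an induced subgraph on four vertices $x_a,x_b,x_c,x_d$ consisting of exactly the two edges $\{x_a,x_b\}$ and $\{x_c,x_d\}$. This is because the complement of $C_4$ is $2K_2$, which is $C_4$'s two "diagonals".

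\textbf{Step 1: restrict to the four vertices.} Apply \Cref{proplin}(2) repeatedly, once for each variable $x\notin\{x_a,x_b,x_c,x_d\}$. Each step deletes from the generating set those generators divisible by $x$, and the resulting ideal stays componentwise linear. After all deletions we obtain an ideal $J$ with
\[
\G(J)=\{u\in\G(I)\mid \supp(u)\subseteq\{x_a,x_b,x_c,x_d\}\}.
\]
Each such $u$ has support two, and its support is an edge of $G_I$ inside $\{x_a,x_b,x_c,x_d\}$. Since the induced subgraph is exactly the two edges, $\supp(u)$ is either $\{x_a,x_b\}$ or $\{x_c,x_d\}$. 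Hence
\[
J=J_1+J_2,
\]
where $J_1$ is generated by monomials in $x_a,x_b$ and $J_2$ by monomials in $x_c,x_d$. Both are nonzero, because each edge comes from at least one generator of $I$. So $J$ is a componentwise linear sum of two ideals in disjoint sets of variables.

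\textbf{Step 2: derive a contradiction from the regularity.} By \Cref{maximum}, $\reg(J)=d(J)=\max\{d(J_1),d(J_2)\}$. On the other hand, \Cref{reg-sum} gives
\[
\reg(J)=\reg(J_1)+\reg(J_2)-1\ \ge\ d(J_1)+d(J_2)-1 .
\]
Here we use $\reg(J_i)\ge d(J_i)$, since the highest generator degree is always a lower bound for the regularity. Every generator has degree at least $2$, so $\min\{d(J_1),d(J_2)\}\ge 2$. Therefore
\[
d(J_1)+d(J_2)-1>\max\{d(J_1),d(J_2)\},
\]
which contradicts $\reg(J)=d(J)$. Hence $(G_I)^c$ has no induced $4$-cycle.

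The main point to check carefully is Step 1. One must verify that the iterated deletions leave exactly the generators of $I$ supported on the four chosen vertices, and that these generators are still minimal. The latter holds because they form a subset of $\G(I)$, and any subset of a minimal monomial generating set is again minimal. Step 2 is then a routine computation.
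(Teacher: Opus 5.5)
Your proof is correct and follows essentially the same argument as the paper. You restrict to the four vertices via \Cref{proplin}(2) to get a componentwise linear ideal $J=J_1+J_2$ in disjoint sets of variables, then contradict $\reg(J)=d(J)$ from \Cref{maximum} using $\reg(J)\geq d(J_1)+d(J_2)-1>\max\{d(J_1),d(J_2)\}$; the paper does exactly this, calling the restricted ideal $I'=I_1+I_2$.
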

	\begin{proof}
		Suppose $(G_{I})^c$ has an induced $4$-cycle $C_4$ with the vertex set $\{x_1,x_2,x_3,x_4\}$ and the edge set $\{\{x_1,x_2\},\{x_2,x_3\},\{x_3,x_4\},\{x_1,x_4\}\}$. Then we have $\{\{x_1,x_3\},\{x_2,x_4\}\} \subseteq E(G_I)$, which forms an induced matching in $G_I$. Now, we consider two monomial ideals $I_1$ and $I_2$ such that $\G(I_1)=\{f\in \G(I)\mid \supp(f)=\{x_1,x_3\}\}$ and $\G(I_2)=\{g\in \G(I)\mid \supp(g)=\{x_2,x_4\}\}$. By the definition of regularity, we have $\reg(I_1)\geq d(I_1)$ and $\reg(I_2)\geq d(I_2)$. Thus, $\reg(I_1+I_2)=\reg(I_1)+\reg(I_2)-1\geq d(I_1)+d(I_2)-1$. Let $I'=I_1+I_2$. Then $\reg(I')> d(I')=\max\{d(I_1),d(I_2)\}$ as both $d(I_1),d(I_2)\geq 2$. Since $\{\{x_1,x_3\},\{x_2,x_4\}\}$ forms an induced matching in $G_I$, one can verify that 
        $$\G(I')=\G(I)\setminus \{u \in \G(I) \mid x_i \text{ divides } u \text{ for some } 5\leq i\leq n\}.$$ 
        Thus, by \Cref{proplin}, $I'$ is componentwise linear as $I$ is componentwise linear. This gives a contradiction due to \Cref{maximum} and the fact that $\reg(I')>d(I')$. Hence, $(G_{I})^c$ has no induced $4$-cycle.
	\end{proof}

\begin{lemma}\label{path-upper-bound}
   Let $I \subseteq R$ be a support-two monomial ideal such that $\alpha_{i}(I)=w_{i}(I)$ for all $1 \leq i \leq n$. If $G_{I}$ is co-chordal, then we have
   $$\reg(I) \leq \sum_{i=1}^n \alpha_{i}(I)-\vert V(G_I) \vert +2.$$
\end{lemma}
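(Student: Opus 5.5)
The hypothesis $\alpha_i(I)=w_i(I)$ says that each variable $x_i$ with $x_i\in V(G_I)$ appears in every minimal generator containing it with one and the same exponent, say $a_i:=\alpha_i(I)\geq 1$. Hence $I$ is minimal and
$$I=(x_i^{a_i}x_j^{a_j}\mid \{x_i,x_j\}\in E(G_I)).$$
So $I$ is obtained from the edge ideal $I(G_I)$ by the substitution $x_i\mapsto x_i^{a_i}$. The plan is to transfer a linear resolution of $I(G_I)$ through this substitution and read off the resulting degree shifts. Only vertices of $G_I$ enter the bound; variables not in $V(G_I)$ do not appear in $\G(I)$ and play no role.

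\textbf{Step 1: $I$ is a flat base change of $I(G_I)$.} Let $S=\mathbb{K}[y_1,\ldots,y_n]$ and let $\varphi:S\to R$ be the $\mathbb{K}$-algebra map $y_i\mapsto x_i^{a_i}$. The ring $R$ is a free $S$-module, with basis the monomials $x_1^{c_1}\cdots x_n^{c_n}$ with $0\le c_i<a_i$, so $\varphi$ is flat. Take the minimal $\mathbb{Z}^n$-graded free resolution $F_\bullet$ of $J:=I(G_I)\subseteq S$.
\begin{itemize}
\item By flatness, $F_\bullet\otimes_S R$ is a free resolution of $JR=I$.
\item It is still minimal, because the differentials have entries in $(y_1,\ldots,y_n)$, and these map into $\m$.
\end{itemize}
A basis element of $F_k$ with multidegree $\mathbf{b}\in\mathbb{Z}^n$ becomes a basis element of standard degree $\sum_i a_ib_i$.

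\textbf{Step 2: use Fr\"oberg's theorem on $J$.} Since $G_I$ is co-chordal, $J$ has a linear resolution, so $\beta_{k,j}(J)\ne 0$ only for $j=k+2$. Because $J$ is squarefree, every multidegree $\mathbf b$ occurring in $F_k$ is a $0/1$-vector, namely the indicator vector of a subset $\sigma\subseteq V(G_I)$; this follows from Hochster's formula, or from the fact that multidegrees are lcm's of generators. Linearity then gives $|\sigma|=k+2$.

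\textbf{Step 3: the estimate.} By Step 1, a free summand of $F_k\otimes_S R$ sits in degree $\sum_{i\in\sigma}a_i$. Its contribution to $\reg(I)$ is therefore
$$\sum_{i\in\sigma}a_i-k=\sum_{i\in\sigma}(a_i-1)+|\sigma|-k=\sum_{i\in\sigma}(a_i-1)+2.$$
Since $a_i-1\ge 0$ for every vertex, this is at most
$$\sum_{i\in V(G_I)}(a_i-1)+2=\sum_{i}\alpha_i(I)-|V(G_I)|+2.$$
Taking the maximum over all summands yields the claimed bound on $\reg(I)$.

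\textbf{Main obstacle.} The delicate point is Step 1: checking that the base change preserves minimality and produces exactly the degree shifts $\sum_{i\in\sigma}a_i$. If one prefers to avoid the flatness argument, an alternative is polarization. Since all exponents of $x_i$ coincide, $I^{\PP}$ is the edge ideal of $G_I$ with each vertex $x_i$ replaced by the squarefree monomial $x_{i1}\cdots x_{ia_i}$ in disjoint variables. One then computes via Hochster's formula that $\reg(I^{\PP})=\reg(I)$ obeys the same bound.
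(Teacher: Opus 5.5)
Your proof is correct, but it takes a genuinely different route from the paper.

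The paper argues by induction on $m(I)=\sum_i\alpha_i(I)$. It picks $x_1$ with $\alpha_1(I)>1$ and notes two things: $(I:x_1)$ is of the same type, with the same base graph and $m$ lowered by one; and $(I,x_1)=(I'',x_1)$, where $I''$ is of the same type on the co-chordal graph $G_I\setminus x_1$ with $m(I'')\le m(I)-2$. It then closes the induction with the regularity lemma applied to $0\to R/(I:x_1)(-1)\to R/I\to R/(I,x_1)\to 0$. Fr\"oberg's theorem enters only in the base case $I=I(G_I)$.

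You instead realize $I$ as the extension of $I(G_I)$ along the flat substitution $y_i\mapsto x_i^{a_i}$, grading $S$ by $\deg y_i=a_i$. The minimal multigraded resolution of $I(G_I)$ then base-changes to the minimal resolution of $I$, and the bound follows from Fr\"oberg's theorem together with the squarefreeness of the multidegrees.

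Your route gives more than the inequality. It computes all graded Betti numbers of $I$, and in particular $\reg(I)=\max\{\sum_{i\in\sigma}\alpha_i(I)-k \mid \beta_{k,\sigma}(I(G_I))\neq 0\}$ for any base graph, co-chordal or not. Combined with Hochster's formula, it also yields \Cref{lemcochord} directly. If $(G_I)^c$ is an $n$-cycle with $n\ge 4$, then $\sigma=V(G_I)$ occurs only in homological degree $n-3$ and contributes $\sum_i\alpha_i(I)-n+3$. Every proper subset $\sigma$ induces a chordal subgraph of $(G_I)^c$, so $\beta_{k,\sigma}\neq 0$ forces $|\sigma|=k+2$, and such $\sigma$ contribute at most $\sum_i\alpha_i(I)-n+2$.

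The paper's argument, by contrast, is entirely elementary: it uses only colon ideals and short exact sequences, with no flatness or multigraded Betti numbers. The same inductive scheme is then reused in the proof of \Cref{lemcochord}.
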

\begin{proof}
Observe that due to the given condition, $I$ is a minimal support-two monomial ideal; in fact, $I$ is the edge ideal of a vertex-weighted graph. We proceed by mathematical induction on the sum of the exponents of variables $m(I)=\sum_{i=1}^n \alpha_i(I)$. Note that $m(I)\geq n$, where $n$ is the number of variables involved in $\G(I)$, i.e., $n=|V(G_I)|$. So, the base case is $m(I)=n$. Note that $m(I)=n$ if and only if $\alpha_i(I)=1$ for all $i$. In this case, we have $I=I(G_I)$, and thus, by Fr\"{o}berg's theorem, we have $\reg(I)=2$. Now, let us assume $m(I)>n$. Then there exists a vertex $x_i$ for which $\alpha_i(I)>1$. Without loss of generality, suppose the exponent of the variable $x_1$ in $\G(I)$ is greater than one, i.e., $\alpha_{1}(I)>1$. Then
$$(I:x_1)=\left(x_1^{\alpha_1(I)-1}x_{i}^{\alpha_{i}(I)} \mid x_1^{\alpha_1(I)}x_{i}^{\alpha_{i}(I)} \in \mathcal{G}(I)\right)+\left(u\in\G(I)\mid x_1\nmid u\right).$$
It is easy to see that $I'=(I:x_1)$ is again a support-two monomial ideal with $\alpha_i(I')=w_{i}(I')$ for all $1\leq i\leq n$ and the base graph of $I'$ is $G_{I}$ itself. Since $m(I')=\sum_{i=1}^{n}\alpha_i(I')=m(I)-1$ and $G_{I'}=G_{I}$ is co-chordal, using the induction hypothesis, we get $\reg(I') \leq \sum_{i=1}^n \alpha_i(I) -\vert V(G) \vert +1$. Now, consider the ideal $I''=(u\in\G(I)\mid x_1\nmid u)$. Then we have $(I,x_1)=(I'',x_1)$. Observe that $I''$ is a support-two monomial ideal with the base graph $G_{I''}=G_{I}\setminus x_1$ and $\alpha_{i}(I'')=w_{i}(I'')$ for all $2\leq i\leq n$. Since $G_I$ is co-chordal, $G_{I''}$ is so. It is obvious that $m(I'')\leq m(I)-2$ as $\alpha_{1}(I)\geq 2$. Thus, applying the induction hypothesis, we have 
$$\reg(I,x_1)=\reg(I'') \leq \sum_{i=1}^n \alpha_{i}(I) - \vert V(G_I) \vert +1.$$
\noindent Now, we consider the following exact sequence
$$0 \rightarrow R/(I:x_1)(-1) \rightarrow R/I \rightarrow R/(I,x_1) \rightarrow 0.$$
  Then by applying \Cref{lem:regularity-lemma}(1) on the above exact sequence, we have $$\reg(R/I)\leq \max\{\reg(R/(I:x_1))+1, \reg(R/(I,x_1))\}.$$
\noindent Hence, we have $\reg(I) \leq \sum_{i=1}^n \alpha_i(I)-\vert V(G_I) \vert +2$.

% Hence, \textcolor{red}{by \cite[Corollary 3.3(ii)]{chhktt19}}, we have $\reg(I) \leq \sum_{i=1}^n \alpha_i(I)-\vert V(G_I) \vert +2$.
\end{proof}

\begin{proposition}\label{lemcochord}
Let $I\subseteq R$ be a support-two monomial ideal such that $\alpha_i(I)=w_i(I)$ for all $1 \leq i \leq n$. Suppose $(G_{I})^c$ is a cycle of length greater than $3$. Then we have 
$$ \reg(I) = \sum_{i=1}^n \alpha_i(I)-\vert V(G_I) \vert +3.$$
 \end{proposition}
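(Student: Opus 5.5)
The plan is to compute the graded Betti numbers of $I$ directly from those of the squarefree edge ideal $I(G_I)$. The tool is the fact that $I$ is obtained from $I(G_I)$ by a flat substitution of variables.

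First, set $a_i=\alpha_i(I)=w_i(I)$. Every $x_i$ then occurs in $\G(I)$ with the single exponent $a_i$. So, as observed in the proof of \Cref{path-upper-bound}, $I$ is minimal and $I=(x_i^{a_i}x_j^{a_j}\mid \{x_i,x_j\}\in E(G_I))$. We may assume all $n$ variables are vertices of $G_I$. Consider the subring $S=\mathbb{K}[y_1,\ldots,y_n]\to R$ with $y_i\mapsto x_i^{a_i}$. Then $R$ is a free $S$-module, with basis the monomials $x^{b}$ satisfying $0\le b_i<a_i$, so $R$ is flat over $S$, and $I=I(G_I)R$ where $I(G_I)\subseteq S$. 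Tensoring a minimal $\mathbb{Z}^n$-graded free resolution of $I(G_I)$ over $S$ with $R$ gives a free resolution of $I$. It stays minimal because the entries of the differentials still lie in the maximal ideal. The multidegree $\sigma$ (a squarefree shift) is sent to the degree $\sum_{k\in\sigma}a_k$. Hence
$$\reg(I)=\max\Big\{\sum_{k\in\sigma}a_k-i \;\Big|\; \beta_{i,\sigma}(I(G_I))\neq 0\Big\}.$$

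Second, describe the nonzero multigraded Betti numbers of $I(G_I)$ using Hochster's formula:
$$\beta_{i,\sigma}(I(G_I))=\dim_{\mathbb{K}}\tilde H_{|\sigma|-i-2}\big(\Delta(G_I^c)[\sigma]\big),$$
where $\Delta(G_I^c)$ is the independence complex of $G_I$, i.e.\ the clique complex of $G_I^c$. Since $G_I^c=C_n$ with $n\ge 4$, it has no triangles, so this clique complex is the $1$-dimensional cycle $C_n$ itself.

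For $\sigma=V$ the complex is a circle, with $\tilde H_1=\mathbb{K}$ and all other reduced homology zero. This gives $i=n-3$ and the contribution $\sum_k a_k-n+3$.

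For $\sigma\subsetneq V$ the induced subgraph is a disjoint union of paths, so only $\tilde H_0$ can be nonzero. This forces $i=|\sigma|-2$, giving the contribution $\sum_{k\in\sigma}a_k-|\sigma|+2$. Because every $a_k\ge 1$, this is at most $\sum_{k=1}^n a_k-n+2$.

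Taking the maximum yields $\reg(I)=\sum_i\alpha_i(I)-|V(G_I)|+3$, and the answer is independent of the field.

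The step needing the most care is the flat base change: one must check that minimality and the multidegree bookkeeping survive the substitution $y_i\mapsto x_i^{a_i}$. If that route is uncomfortable, the fallback is to polarize. The polarization $I^{\PP}$ is obtained from $I(G_I)$ by replacing each vertex $x_i$ with a product of $a_i$ new variables with pairwise disjoint supports. The same Hochster/upper-Koszul-simplicial-complex computation on lcm-lattices then gives the identical formula, since the lcm lattices of $I$ and $I(G_I)$ are isomorphic. As a consistency check, the upper bound agrees with \Cref{path-upper-bound} plus the single extra top Betti number coming from $\sigma=V$.
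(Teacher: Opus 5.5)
Your argument is correct, but it takes a different route from the paper's.

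\textbf{The paper's route.} The paper inducts on $m(I)=\sum_i\alpha_i(I)$.
\begin{itemize}
\item The base case $I=I(G_I)$ is handled by Nevo's theorem on gap-free, claw-free graphs, with $n=4$ done directly.
\item In the inductive step it picks $x_k$ with $\alpha_k(I)>1$. The colon ideal $(I:x_k)$ has the same base graph and exponent sum one smaller, so induction gives $\reg(I:x_k)=m(I)-n+2$.
\item It bounds $\reg(I,x_k)=\reg(I'',x_k)$ strictly below this value, by applying \Cref{path-upper-bound} to the co-chordal graph $P_{n-1}^c$.
\item The short exact sequence for multiplication by $x_k$ then forces $\reg(I)=\reg(I:x_k)+1$.
\end{itemize}

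\textbf{Your route.} You realize $I$ as the extension of $I(G_I)$ along the flat map $y_i\mapsto x_i^{a_i}$. The graded Betti numbers of $I$ are then the multigraded Betti numbers of $I(G_I)$, with each squarefree shift $\sigma$ reweighted to $\sum_{k\in\sigma}a_k$. You then read everything off Hochster's formula on the clique complex of $C_n$.

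\textbf{What your route buys.}
\begin{itemize}
\item It needs no induction and no appeal to Nevo.
\item It does not use \Cref{path-upper-bound}. In fact the same reweighting proves that lemma in one line: for co-chordal $G$ only shifts with $|\sigma|=i+2$ occur.
\item It produces the whole Betti table of $I$ and makes field-independence transparent.
\item It pinpoints the extra $+1$ in the regularity: it comes from the single top Betti number $\beta_{n-3,V}$, i.e.\ $\tilde H_1$ of the circle.
\end{itemize}

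\textbf{What the paper's route buys.} Your argument imports external machinery: Hochster's formula and flat base change, or equivalently the lcm-lattice comparison. The paper's proof stays within the elementary toolkit it uses elsewhere (colon ideals and the regularity lemma), and its induction deliberately parallels the proof of \Cref{path-upper-bound}. The hypothesis $\alpha_i(I)=w_i(I)$ is essential in both proofs, but for different reasons. For the paper, it keeps $(I:x_k)$ in the same class. For you, it makes $I$ a substitution image of $I(G_I)$.
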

 \begin{proof}
We proceed by induction on $m(I)=\sum_{i=1}^{n}\alpha_i(I)$. Since $m(I)\geq n=|V(G_I)|$, the base case is $m(I)=n$, which implies $I=I(G_I)$. In this case, it follows from Fr\"{o}berg's theorem that $\reg(I)\geq 3$. If $(G_I)^c=C_4$, then $G_I$ is a disjoint union of two edges, which gives $\reg(I)=3$. Again, if $(G_I)^c=C_n$ for some $n\geq 5$, then we see that $G_I$ is a gap-free and claw-free graph. Thus, $\reg(I)= 3$ by Nevo's result \cite[Theorem 1.2(2)]{n11}. Therefore, the inductive statement is true in the base case. Now, assume $m(I)>n$. Then there exists $k \in [n]$ such that $\alpha_{k}(I)>1$. Then, proceeding as in \Cref{path-upper-bound}, we see that $I'=(I:x_k)$ is a support-two monomial ideal with $m(I')=m(I)-1$ and $G_{I'}=G_{I}$. Thus, by the induction hypothesis, we have $\reg(I') = \sum_{i=1}^n \alpha_i(I) -\vert V(G_I) \vert +2$. Now, let us focus on the ideal $I''=(g\in \G(I)\mid x_{k}\nmid g)$. Then one can see that $I''$ is a support-two monomial ideal with the base graph $G_{I''}=P_{n-1}^c$. Since path graph is a chordal graph, $G_{I''}$ is co-chordal, and thus, by \Cref{path-upper-bound}, we have
     \begin{align*}
         \reg(I,x_k)&=\reg(I'',x_k)\\
         &\leq \sum_{i\in [n]\setminus \{k\}} \alpha_{i}(I'') -\vert V(G_{I''}) \vert +2\\
         &\leq\sum_{i=1}^n \alpha_i(I)-\alpha_{k}(I)-(\vert V(G_{I})\vert-1)+2\\
         &<\sum_{i=1}^n \alpha_i(I)-\vert V(G_{I})\vert+2=\reg(I:x_k) \quad (\text{since }\alpha_k(I)>1).
     \end{align*}
 \noindent Now, we consider the following exact sequence
$$0 \rightarrow R/(I:x_k)(-1) \rightarrow R/I \rightarrow R/(I,x_k) \rightarrow 0.$$
  Then by applying \Cref{lem:regularity-lemma}(1) \& (2) on the above exact sequence, we have 
  $$\reg(R/I)= \reg(R/(I:x_k))+1,  \text{ because } \reg(R/(I,x_k))+1 < \reg(R/(I:x_k))+1.$$
Therefore, we have $\reg(I)= \reg(I:x_k)+1=\sum_{i=1}^n \alpha_i(I)-\vert V(G_I) \vert +3$.
 \end{proof}

 Now, we are going to prove our first main result of this paper.
\begin{theorem}\label{thmcochordal}
	Let $I\subseteq R$ be a support-two monomial ideal. If $I$ is componentwise linear, then the complement of $G_I$ is chordal (equivalently, $\sqrt{I}$ has a linear resolution).  
\end{theorem}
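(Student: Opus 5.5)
We argue by contradiction. Suppose $I$ is componentwise linear but $(G_I)^c$ is not chordal. Then $(G_I)^c$ has an induced cycle $C$ of length $m\geq 4$; relabel so that $V(C)=\{x_1,\ldots,x_m\}$. If $m=4$, \Cref{thmind4} already gives a contradiction, so assume $m\geq 5$.

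\emph{Step 1: restrict to the cycle.} Let $I'$ be the ideal with $\G(I')=\G(I)\setminus\{u\mid x_i \text{ divides } u \text{ for some } i>m\}$. Applying \Cref{proplin}(2) once for each deleted variable shows that $I'$ is componentwise linear. Moreover $I'$ is a support-two monomial ideal with $G_{I'}=G_I[\{x_1,\ldots,x_m\}]$, so $(G_{I'})^c=C_m$.

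\emph{Step 2: make the exponents uniform.} The aim is a componentwise linear ideal $J$ to which \Cref{lemcochord} applies, i.e.\ one with $\alpha_i(J)=w_i(J)$ for all $i$ and $(G_J)^c$ still a cycle of length $\geq 5$. I would pass to the polarization $(I')^{\PP}$, which is componentwise linear by \Cref{compntpolariz} and squarefree. Deleting, via \Cref{proplin}(2), all generators of $(I')^{\PP}$ divisible by a chosen polarized variable $x_{i,j}$ keeps componentwise linearity. Concretely, I would choose an exponent $e_i$ for each $i$ and delete the variables $x_{i,j}$ with $j>e_i$. The surviving generators are then the polarizations of those $x_i^{a}x_k^{b}\in\G(I')$ with $a\leq e_i$ and $b\leq e_k$.

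The main obstacle is choosing the $e_i$ so that two things hold simultaneously:
\begin{enumerate}
\item every edge of $C_m^c$ still carries a surviving generator;
\item each variable appears among the surviving generators with a single exponent.
\end{enumerate}
Note that a polarized ideal with a single exponent per variable is, after depolarizing the variables $x_{i,1}\cdots x_{i,e_i}$, exactly of the type treated in \Cref{path-upper-bound} and \Cref{lemcochord}. The first attempt would be $e_i=\alpha_i(I')$. If some edge $\{x_i,x_k\}$ then loses all its generators, I would lower the obstruction by an induction on $\sum_i (w_i(I')-\alpha_i(I'))$. At each step I would either enlarge the relevant $e_i$ or trim generators, using the colon/restriction exact sequence of \Cref{lem:regularity-lemma} to transport a regularity lower bound instead of componentwise linearity itself. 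If edges are lost irreparably, the complement of the new base graph is no longer $C_m$ but contains an induced cycle or an induced $4$-cycle on fewer vertices. That case is handled by induction on $m$, with \Cref{thmind4} as the base.

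\emph{Step 3: the regularity contradiction.} Once we have a componentwise linear $J$ with $\alpha_i(J)=w_i(J)$ and $(G_J)^c=C_m$, \Cref{lemcochord} gives
$$\reg(J)=\sum_{i=1}^m\alpha_i(J)-m+3.$$
Every generator of $J$ has the form $x_i^{\alpha_i}x_k^{\alpha_k}$, and each $\alpha_\ell\geq1$. Bounding the remaining $m-2$ exponents below by $1$ gives, for every such generator,
$$\reg(J)\geq \alpha_i+\alpha_k+(m-2)-m+3=\alpha_i+\alpha_k+1.$$
Hence $\reg(J)>d(J)$, which contradicts \Cref{maximum}. Therefore $(G_I)^c$ is chordal, and by Fr\"oberg's theorem $\sqrt{I}=I(G_I)$ has a linear resolution.
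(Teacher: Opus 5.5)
\textbf{There is a genuine gap in Step 2.} Your Steps 1 and 3 are sound. They match the paper's Case I and the uniform-exponent part of Case II, where \Cref{lemcochord} forces $\reg>d$. The whole difficulty sits in Step 2, and what you wrote there does not work as stated.

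\textbf{Conditions (1) and (2) can be incompatible.} Take $m=5$, so that $G_{I'}$ has edges $x_1x_3,x_1x_4,x_2x_4,x_2x_5,x_3x_5$, and let $\G(I')=\{x_1^2x_3,\,x_1x_4,\,x_2x_4,\,x_2x_5,\,x_3x_5^2\}$. Condition (1) forces both $x_1x_4$ and $x_1^2x_3$ to survive, which violates (2). So you cannot avoid losing edges.

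\textbf{Your claim about lost edges is unjustified and in general false.} You assert that after losing edges the complement of the new base graph still contains an induced cycle of length at least $4$. For an uncontrolled choice of the $e_i$ this fails. With your first attempt $e_i=\alpha_i(I')$ in the example above, both edges at $x_3$ disappear. The surviving base graph is the path $x_1x_4x_2x_5$, whose complement is chordal, so no contradiction is available. The phrases ``enlarge the relevant $e_i$ or trim generators'' and ``transport a regularity lower bound'' do not supply the missing control. Moreover, a bound $\reg(J)>d(J)$ for a restriction $J$ says nothing about $\reg(I')$ versus $d(I')$, because $d(J)$ can be smaller than $d(I')$. This is why one pushes componentwise linearity \emph{down} to $J$ and derives the contradiction at the level of $J$.

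\textbf{The missing idea is to lower exponents by one unit at a single vertex.} Pick $x_1$ with $\alpha_1(I')<w_1(I')$, and delete only the generators of $(I')^{\PP}$ divisible by $x_{1,w_1}$.
\begin{itemize}
\item The removed generators are exactly those containing $x_1^{w_1}$. So the only edges that can vanish are edges $\{x_1,x_k\}$ whose unique generator is $x_1^{w_1}x_k^{b}$.
\item The edge carrying a generator with $x_1$-exponent $\alpha_1<w_1$ survives.
\item Hence the new complement is $C_m$ together with some, but not all, chords from $x_1$ to its non-neighbours on the cycle. Take a non-neighbour $x_i$ and the nearest neighbours of $x_1$ on either side of it along the cycle. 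Closing that arc through $x_1$ gives an induced cycle of length at least $4$.
\end{itemize}
The smaller ideal is componentwise linear by \Cref{compntpolariz} and \Cref{proplin}(2), yet its base graph is not co-chordal. Since $\sum_i w_i$ strictly drops, a strong induction on $\sum_i w_i$ over all support-two monomial ideals finishes the proof. This is precisely the paper's Subcases II(a) and II(b).

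Without this controlled one-vertex reduction, or some other argument guaranteeing that the complement stays non-chordal, Step 2 is not a proof.
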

	\begin{proof}
		For a clearer understanding of the proof, the reader is encouraged to be thoroughly familiar with \Cref{notation}. We proceed by induction on $m(I)=\sum_{i=1}^n w_i(I)$. It is clear that $m(I)\geq n$, where $n$ is the number of vertices of the underlying simple graph $G_{I}$ of $I$. So, the base case is $m(I)=n$, which is equivalent to saying $w_i(I)=1$ for all $i\in [n]$, i.e., $I=I(G_I)$. In this case, componentwise linearity of $I$ implies $I$ has a linear resolution, since $I$ is equigenerated in degree $2$. Hence, $(G_{I})^c$ is chordal by Fr\"{o}berg's theorem. Now, let us assume $m(I)>n$. Then there exists at least one vertex $x_i\in V(G_I)$ with $w_i(I)>1$. From \Cref{thmind4}, it follows that $(G_{I})^c$ has no induced cycle of length $4$. Now, suppose $(G_{I})^c$ is not chordal, and it contains a cycle $C$ of length greater than or equal to $5$. Let $G_{I}[V(C)]$ be the induced subgraph of $G_{I}$ on the vertex set $V(C)$. Then $G_{I}[V(C)]=C^c$, the complement of the cycle $C$. For simplicity of notation, we write $G_I=G$. We consider the following possible cases:\par 
  
		\noindent {\bf Case-I.} Suppose $G$ contains a vertex $x_j$ other than the vertices of $C$. Then $I'$ is componentwise linear by \Cref{proplin}(2), where $I'=(g\in \G(I)\mid x_j\nmid g)$. Note that $I'$ is again a support-two monomial ideal with the underlying simple graph $G\setminus x_j$ and $m(I')<m(I)$. Therefore, by the induction hypothesis, $(G\setminus x_{j})^c$ is chordal. However, since $x_j\not\in V(C^c)$, $C^c$ is an induced subgraph of $G\setminus x_j$. This contradicts the fact that $C$ is a cycle of length greater than or equal to $5$.\par
  
	\noindent {\bf Case-II.}  Suppose $G$ does not contain any vertex other than the vertices of $C$, i.e., $G=C^c$. Since $I$ is componentwise linear, by \Cref{maximum}, we have $\mathrm{reg}(I)=d(I)$. If $\alpha_{i}(I)=w_{i}(I)$ for all $1\leq i\leq n$, then using \Cref{lemcochord}, one can derive that $\reg(I)>d(I)$. Thus, there exists $i\in [n]$ for which $\alpha_{i}(I)\neq w_{i}(I)$. In this case, note that $w_i(I)>1$. Let $V(G^c)=\{x_1,\ldots,x_n\}$ and $E(G^c)=\{\{x_{i},x_{i+1}\}\mid 1\leq i\leq n\,\,\text{and}\,\, x_{n+1}=x_{1}\}$. Without loss of generality, let $x_1$ be the vertex for which $\alpha_1(I)\neq w_1(I)$. Now, we have the following two possible subcases:\\
\noindent \textbf{Subcase-II(a).} For every $x_1^{w_1(I)}x_{k}^{b_k}\in \G(I)$, there exists $x_1^{a_1}x_k^{c_k}\in \G(I)$ for some $a_1<w_1(I)$ and $c_k>b_k$, where $x_k\in\N_{G}(x_1)=\{x_3,\ldots,x_{n-1}\}$. Since $I$ is componentwise linear, we have $I^{\PP}$ is componentwise linear by \Cref{compntpolariz}. Now, consider the square-free monomial ideal $J'$ such that $\G(J')=\{u\in \G(I^{\PP})\mid x_{1w_1(I)}\nmid u\}$. Then by \Cref{proplin}(2), $J'$ is componentwise linear. Now, due to our assumption, one can observe that $J'=I_{1}^{\PP}$, where $I_{1}$ is a support-two monomial ideal with the same underlying graph $G$ and $w_{1}(I_1)<w_1(I), w_{2}(I_1)=w_2(I),\ldots,w_{n}(I_1)=w_n(I)$. Therefore, we have $m(I_1)<m(I)$, which implies $I_{1}$ is not componentwise linear by the induction hypothesis as $G$ is not co-chordal. Thus, from \Cref{compntpolariz}, we can say that $J'$ is not componentwise linear, which gives a contradiction.\par

\noindent \textbf{Subcase-II(b).} Suppose there exists $x_{k}\in \N_{G}(x_1)$ such that $x_{1}^{w_1(I)}x_{k}^{b_k}\in \G(I)$ for some $b_k\geq 1$ and no other minimal generators of $I$ has support $\{x_1,x_k\}$. Let $A=\{x_{i_1},\ldots,x_{i_{r}}\}\subseteq \N_{G}(x_1)$ be such that $x_{1}^{w_1(I)}x_{i_j}^{b_{i_j}}\in \G(I)$ but no other minimal generators of $I$ has support $\{x_1,x_{i_j}\}$ for each $1\leq j\leq r$. Due to our choice of $x_1$ at the beginning (i.e., $\alpha_1(I)\neq w_1(I)$), we have $A\subsetneq \N_{G}(x_1)$. Now, consider the graph $G_1$ as follows: 
$$V(G_1)=V(G) \text{ and } E(G_1)=E(G)\setminus \{\{x_1,x_{i_1}\},\ldots,\{x_1,x_{i_r}\}\}.$$
Again, we recall the square-free monomial ideal $J'$ with $\G(J')=\{u\in \G(I^{\PP})\mid x_{1w_1(I)}\nmid u\}$. Since $I^{\PP}$ is componentwise linear by \Cref{compntpolariz}, it follows from \Cref{proplin}(2) that $J'$ is also componentwise linear. Next, let us define the monomial ideal $I_1$ such that 
$$\G(I_1)=\G(I)\setminus \{x_{1}^{w_1(I)}x_{i_j}^{b_{i_j}}\mid 1\leq j\leq r\}.$$ 
Then one can easily verify that $I_1$ is a support-two monomial ideal with the base graph $G_1$ and $I_1^{\PP}=J'$. If we look at the complement of $G_1$, then we can see that $E((G_1)^{c})=E(G^c)\cup \{\{x_1,x_{i_{1}}\},\ldots,\{x_{1},x_{i_{r}}\}\}$. Since $A\subsetneq \N_{G}(x_1)$, there should exists a vertex $x_{i}\in \N_{G}(x_1)$ such that $\{x_1,x_i\}\not\in E((G_1)^c)$. Therefore, $(G_1)^c$ contains an induced cycle of length greater than or equal to $4$ as $G^c$ is a cycle of length greater than or equal to $5$. Hence, $G_1$ is not co-chordal. From the construction of the ideal $I_1$, we see that $m(I_1)<m(I)$. Thus, by the induction hypothesis, $I_1$ is not componentwise linear. This gives a contradiction to the fact that $J'$ is componentwise linear due to \Cref{compntpolariz}.\par 
In each of the above cases, we arrive at a contradiction. Therefore, our initial assumption is false, and hence $G = G_I$ is co-chordal.
\end{proof}
The example below conveys that the converse of \Cref{thmcochordal} is not true.

 \begin{example}
Let $R = \mathbb{Q}[x_1,x_2,x_3]$ and $I=(x_1^3x_2^2, x_2^3x_3^4, x_3^4x_1^5).$ Then by Macaulay2 \cite{M2}, $\reg(I)=10$. Thus, by \Cref{maximum}, $I$ is not componentwise linear, whereas $G_I$ is co-chordal. 
\end{example}   

\section{Linear resolution of powers of minimal support-two monomial ideals}\label{powers-minimal-support}
In this section, we investigate the linearity of powers of support-two monomial ideals. In particular, for minimal support-two monomial ideals and their powers, we provide a complete characterization of those that admit a linear resolution. To make the proof of our main result more transparent, we first establish several lemmas, which in fact constitute essential parts of the proof.
\begin{proposition}\label{prop:reg ci}
 Let $I=(m_1,\ldots,m_r)$ be a complete intersection monomial ideal 
with $\deg(m_i)=d_i$ for all $1\leq i\leq r$. Then for all $k\geq 1$,
\[\displaystyle \reg(I^k)= (k-1) \max_{i\in [r]}\{d_i\}
+\sum_{i=1}^{r} d_i -r+1.\]
\end{proposition}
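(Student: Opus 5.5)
We want $\reg(I^k)$ for a monomial complete intersection $I=(m_1,\ldots,m_r)$. Since the $m_i$ form a regular sequence of monomials, they have pairwise disjoint supports. We may therefore substitute new variables: the map $\mathbb{K}[y_1,\ldots,y_r]\to R$ with $y_i\mapsto m_i$ is flat. Indeed, $R$ is free over $\mathbb{K}[m_1,\ldots,m_r]$ because the $m_i$ involve disjoint sets of variables and each $\mathbb{K}[x_{S_i}]$ is free over $\mathbb{K}[m_i]$. Under this map the minimal free resolution of $(y_1,\ldots,y_r)^k$ base-changes to the minimal free resolution of $I^k$, with the grading twisted by assigning $\deg y_i=d_i$.

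So the plan is to compute the multigraded Betti numbers of $\mathfrak{n}^k$, where $\mathfrak{n}=(y_1,\ldots,y_r)$, and then read off degrees with weights $d_i$. The ideal $\mathfrak{n}^k$ has linear quotients, and the Eliahou–Kervaire resolution of this stable ideal gives its Betti numbers explicitly. Basis elements in homological degree $p$ correspond to pairs $(u,F)$, where $u$ is a monomial of degree $k$ in the $y$'s, $F\subseteq\{1,\ldots,\max(u)-1\}$ and $|F|=p$. The multidegree of $(u,F)$ is $u\prod_{j\in F}y_j$. The resolution is minimal and multigraded, so it transfers under the flat substitution to a minimal resolution of $I^k$. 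Hence
\[\reg(I^k)=\max_{(u,F)}\Big(\deg_d(u)+\sum_{j\in F}d_j-|F|\Big),\]
where $\deg_d$ is the weighted degree.

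To maximize, order the generators so that $d_r=\max_i d_i$. Taking $u=y_r^{k}$ and $F=\{1,\ldots,r-1\}$ gives
\[kd_r+\sum_{j<r}(d_j-1)=(k-1)d_r+\sum_{i=1}^r d_i-r+1.\]
For the upper bound, take any pair $(u,F)$ with $u=y_{i_1}\cdots y_{i_k}$ and $\max(u)=m$. Each $d_{i_t}\le d_r$, so $\deg_d(u)\le (k-1)d_r+d_m$. Also $F\subseteq[m-1]$ and every $d_j\ge1$, so $\sum_{j\in F}(d_j-1)\le\sum_{j\neq m}(d_j-1)$. Adding these gives at most $(k-1)d_r+d_m+\sum_{j\ne m}(d_j-1)$, which equals the claimed value. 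The one subtlety is that the Eliahou–Kervaire indexing depends on the variable order while the weights do not; I handle this by fixing the order with $d_r$ maximal before applying the formula.

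The main obstacle is justifying the flat base change cleanly, in particular that minimality and the grading are preserved. I would either cite the standard fact that Betti numbers of ideals obtained by substituting monomials in disjoint variables are obtained by this substitution, or argue directly by induction on $r$ and $k$.

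An induction alternative avoids flatness. Use the exact sequence
\[0\to (I^k:m_r)(-d_r)\to R/I^k\to R/(I^k,m_r)\to 0,\]
noting that $I^k:m_r=I^{k-1}$ for a regular sequence and that $(I^k,m_r)=(J^k,m_r)$ with $J=(m_1,\ldots,m_{r-1})$. By \Cref{reg-sum}, $\reg(J^k,m_r)=\reg(J^k)+d_r-1$. Combining this with \Cref{lem:regularity-lemma} and the induction hypothesis yields the upper bound. If that bound turns out not to be sharp, I would obtain the lower bound from the explicit Betti number found above.
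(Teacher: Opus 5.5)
Your proof is correct, but it follows a genuinely different route from the paper. The paper gives no argument of its own: it cites \cite[Theorem 1.1]{nv19}, a general formula for the regularity of powers of a sum of ideals living in disjoint sets of variables. Since the $m_i$ have pairwise disjoint supports and $\reg\bigl((m_i)^k\bigr)=kd_i$, that formula gives the statement by induction on $r$.

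You instead use the disjoint supports to make $y_i\mapsto m_i$ a free (hence flat) extension. This reduces the problem to $\mathfrak{n}^k$ with weights $\deg y_i=d_i$, and you read the regularity off the Eliahou--Kervaire resolution. Your optimization over the pairs $(u,F)$ is right. Minimality does survive the base change, because the entries of the differentials lie in $\mathfrak{n}$ and are sent into $(m_1,\ldots,m_r)\subseteq\mathfrak{m}$; you should add that sentence.

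Your route is self-contained and elementary. It yields all graded Betti numbers of $I^k$, not just the regularity. It also works verbatim for any homogeneous complete intersection, since a polynomial ring is flat over the subalgebra generated by a homogeneous regular sequence. The cited theorem is heavier machinery, but it handles sums of arbitrary ideals in disjoint variables, which is far more than is needed here.

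On your fallback: the left-hand term of the exact sequence should be $R/(I^k:m_r)(-d_r)$, not $(I^k:m_r)(-d_r)$. With $d_r$ maximal, this sequence gives only the upper bound in general. For example, take $I=(x_1,x_2^2)$ and $k=2$. The sequence is $0\to R/I(-2)\to R/I^2\to R/(x_1^2,x_2^2)\to 0$, with $\reg(R/I(-2))=3=\reg(R/(x_1^2,x_2^2))+1$. Then parts (2) and (3) of \Cref{lem:regularity-lemma} say nothing about $\reg(R/I^2)$. So, as you anticipated, the Eliahou--Kervaire count is what actually carries the lower bound.
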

\begin{proof}
 The proof follows from \cite[Theorem 1.1]{nv19}.  
\end{proof}

\begin{lemma}\label{star}
Let $I=(x_1^{a_1}x_2^{a_2},x_2^{b_2}x_3^{b_3})$ be a support-two monomial ideal with $\alpha(I)>2$ and $G_{I}=P_3$. Then $I^k$ has a linear resolution if and only if $a_1=b_3=1$ and $a_2=b_2$. 
\end{lemma}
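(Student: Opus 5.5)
The statement must be read as "for all $k\ge 1$" or "for some $k$"; the argument below shows that both readings are equivalent to the stated conditions. Write $u=x_1^{a_1}x_2^{a_2}$ and $v=x_2^{b_2}x_3^{b_3}$. If $I^k$ has a linear resolution, then $I^k$ is equigenerated. Since $\G(I^k)=\{u^iv^{k-i}\}$ (these are minimal because $x_1$ and $x_3$ separate them), we get $\deg u=\deg v=:d$. The condition $\alpha(I)>2$ gives $d\ge 3$.

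Put $c=\min(a_2,b_2)$, $w=x_2^c$, $u'=u/w$ and $v'=v/w$. Then $I=w(u',v')$ and $I^k=w^k(u',v')^k$. By \Cref{reg-sum}, $\reg(I^k)=kc+\reg((u',v')^k)$. Now $u',v'$ have disjoint supports, because only one of them can still contain $x_2$. So $(u',v')$ is a complete intersection with $\deg u'=\deg v'=d-c$. By \Cref{prop:reg ci},
\[\reg(I^k)=kc+(k-1)(d-c)+2(d-c)-1=kd+(d-c)-1.\]
Hence $I^k$ is linear, i.e. $\reg(I^k)=kd$, if and only if $d-c=1$.

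It remains to translate $d-c=1$. Suppose $a_2\le b_2$, so $c=a_2$. Then $d-c=a_1$, and $d-c=1$ means $a_1=1$. Also $d-c=\deg v'=b_2-a_2+b_3\ge b_3\ge 1$, so equality forces $b_3=1$ and $a_2=b_2$. The case $b_2\le a_2$ is symmetric. Conversely, if $a_1=b_3=1$ and $a_2=b_2$, then $d$ agrees for $u$ and $v$ and $d-c=1$.

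The main point needing care is the equigeneration reduction together with the correct use of \Cref{reg-sum} for the factor $w^k$. I also need to check that \Cref{prop:reg ci} applies to $(u',v')$ even when it is principal-free of common factors; it does, since its generators have disjoint support. The rest is the arithmetic above, and the conclusion holds for every $k\ge1$ simultaneously.
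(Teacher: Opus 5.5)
Your proof is correct and follows essentially the same route as the paper: factor out $x_2^{\min\{a_2,b_2\}}$, apply \Cref{prop:reg ci} to the remaining two-generator complete intersection, and compare $\reg(I^k)$ with $kd$. The difference is one of presentation. You treat the paper's three cases $a_2>b_2$, $a_2<b_2$, $a_2=b_2$ uniformly through the formula $\reg(I^k)=kd+(d-c)-1$, and you also justify equigeneration by checking that $\G(I^k)=\{u^iv^{k-i}\}$. This is slightly cleaner than the paper's case-by-case computation.
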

\begin{proof}
First, we assume $I^k$ has a linear resolution. Then $I$ is equigenerated, equivalently, $a_1 + a_2 =  b_2 + b_3=d$ (say). Consider the following cases: \par
\noindent{\bf Case-I.} Suppose $a_2 > b_2$. Then $a_2 + b_3 > b_2 + b_3 = d$ and
\(
I = (x_2^{\,b_2})\big(x_1^{a_1} x_2^{a_2 - b_2},\, x_3^{b_3}\big).
\)
This implies that $I^k=(x_2^{kb_2} \big)\big(x_1^{a_1} x_2^{a_2 - b_2},\, x_3^{b_3})^k$. Thus, by \Cref{prop:reg ci}, we have
\begin{align*}
\reg(I^k) &= kb_2 +(k-1)\max\{a_1+a_2-b_2,b_3\}+(a_1 + a_2 - b_2) + b_3 -2+ 1  \\
       &=kb_2 + (k-1)\{b_3\}+ a_1 + a_2 -b_2 + b_3 - 1 \\
       &= kd-b_3+d-b_2+b_3-1=kd+d-b_2+1.
\end{align*}
By the definition of linear resolution, we have $kd=\reg(I^k)=kd+d-b_2-1$. Thus, we have $b_2+b_3=d=b_2+1$, which implies that $b_3=1$. Also, we have $a_1+a_2=b_2+1$. Since $a_2 > b_2$, then the only choice is $a_1=0$ and $a_2=b_2+1$, which is not possible as $I$ is a support-two monomial ideal. \\
\noindent
{\bf Case-II.} Suppose $a_2 < b_2$. Then similarly, we get $a_1=1$, $b_3=0$ and $b_2=a_2+1$, which is not possible as $I$ is a support-two monomial ideal. \\ 
\noindent
{\bf Case-III.} Suppose $a_2 = b_2=t$. Then $I=(x_2^t)(x_1^{a_1},x_3^{b_3})$. This implies that $I^k=(x_2^{kt})(x_1^{a_1},x_3^{b_3})^k$. Thus, using \Cref{prop:reg ci}, we obtain 
$$\reg(I^k) = tk + (k-1)\max\{a_1,b_3\}+a_1 + b_3 - 1.$$ 
Again, using $a_1+a_2=d=b_2+b_3$ and  $a_2 = b_2=t$, we get $a_1=b_3=d-t$. Since $I^k$ is linear, $\reg(I^k) = kd= tk +(k-1)(d-t)+2(d-t)- 1$. This gives $d-t=1$. Therefore, $a_1+a_2=d$ and $a_2=t$ together imply $a_1=1$. Similarly, we have $b_3=1$.\par

\noindent Considering all the above cases, we see that $a_1 = b_3=1$ and $a_2 = b_2$.\par 

Conversely, assume $a_1=b_3=1$ and $a_2=b_2$. Then $I$ is equigenerated in degree $d=1+a_2=1+b_2$. Thus, using \Cref{prop:reg ci}, we obtain 
\begin{align*}
    \reg(I^k)&=\reg(x_2^{k(d-1)}(x_1,x_3)^k)\\
    &= k(d-1)+(k-1)+2-2+1\\
    &=kd.
\end{align*}
\noindent Hence, the ideal $I^k$ has a linear resolution.
\end{proof}
\begin{lemma}\label{path}
Let $I=(x_1^{a_1}x_2^{a_2},x_2^{b_2}x_3^{b_3},x_3^{c_3}x_4^{c_4})$ be a support-two monomial ideal with the base graph $P_4$. If $\alpha(I)>2$, then $I^k$ has no linear resolution for all $k \geq 1$. 
\end{lemma}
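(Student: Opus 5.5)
The plan is to reduce to \Cref{star} by deleting an end vertex of the path, using \Cref{proplin}(1) applied to the power $I^k$ rather than to $I$. Suppose, for a contradiction, that $I^k$ has a linear resolution for some $k\geq 1$. Then $I^k$ is equigenerated, so $I$ is equigenerated, say in degree $d=\alpha(I)\geq 3$. This means
$$a_1+a_2=b_2+b_3=c_3+c_4=d.$$
Write the generators as $f=x_1^{a_1}x_2^{a_2}$, $g=x_2^{b_2}x_3^{b_3}$ and $h=x_3^{c_3}x_4^{c_4}$.

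\textbf{Step 1 (delete $x_4$).} Every product of $k$ generators of $I$ has degree $kd$. Hence $\G(I^k)$ consists of the distinct monomials $f^{i}g^{j}h^{l}$ with $i+j+l=k$: none of them divides another, since they all have the same degree. Only $h$ involves $x_4$, and $c_4\geq 1$. So a minimal generator of $I^k$ is divisible by $x_4$ exactly when $l\geq 1$. Therefore
$$\G(I^k)\setminus\{u\in\G(I^k)\mid x_4\mid u\}$$
generates $J_1^k$, where $J_1=(f,g)$. By \Cref{proplin}(1), $J_1^k$ has a linear resolution. The ideal $J_1$ is a support-two monomial ideal with base graph $P_3$ and $\alpha(J_1)=d>2$. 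So \Cref{star} gives $a_1=b_3=1$ and $a_2=b_2$.

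\textbf{Step 2 (delete $x_1$).} By the symmetric argument, only $f$ involves $x_1$. Removing from $\G(I^k)$ the generators divisible by $x_1$ leaves the generators of $J_2^k$, where $J_2=(g,h)$. The ideal $J_2$ has base graph $P_3$ with middle vertex $x_3$. Applying \Cref{proplin}(1) and then \Cref{star}, with the roles of the variables relabelled, gives $b_2=c_4=1$ and $b_3=c_3$.

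\textbf{Step 3 (conclude).} Combining the two steps gives $b_2=1$ and $b_3=1$, so $d=b_2+b_3=2$. This contradicts $\alpha(I)>2$, so no power $I^k$ has a linear resolution.

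The only point needing care is the identification in Steps 1 and 2 of the surviving minimal generators of $I^k$ with exactly $\G(J_1^k)$ and $\G(J_2^k)$. This is where equigeneration of $I$ is used, and it ensures \Cref{proplin} yields precisely the power of the smaller ideal to which \Cref{star} applies.
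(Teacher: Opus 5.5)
Your proof is correct and matches the paper's argument: you delete $x_4$ and then $x_1$ from $\G(I^k)$ via \Cref{proplin}(1), apply \Cref{star} to the two resulting $P_3$-ideals, and read off $b_2=b_3=1$, which contradicts $\alpha(I)>2$. Your explicit identification $\{u\in\G(I^k)\mid x_4\nmid u\}=\G(J_1^k)$ fills in a detail the paper leaves implicit, although it does not need equigeneration: no product involving $h$ can divide a monomial that is not divisible by $x_4$.
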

\begin{proof}
If possible, suppose $I^k$ has a linear resolution for some $k\geq 1$. Then by \Cref{proplin}(1), the ideals $(x_1^{a_1}x_2^{a_2},x_2^{b_2}x_3^{b_3})^k$ and $(x_2^{b_2}x_3^{b_3},x_3^{c_3}x_4^{c_4})^k$ have linear resolutions. Now, applying \Cref{star} to these two ideals, we get $a_1=1$, $a_2=b_2$, $b_3=1$ and $b_2=1,b_3=c_3, c_4=1$. Consequently, we obtain $I=(x_1x_2,x_2x_3,x_3x_4)$, which contradicts the fact that $\alpha(I)>2$. Hence, $I^k$ has no linear resolution for every $k\geq 1$.
\end{proof}

\begin{proposition}\label{lem:triangle}
Let $I=(x_1^{a_1}x_2^{a_2},x_2^{b_2}x_3^{b_3},x_1^{c_1}x_3^{c_3})$ be a support-two monomial ideal with the base graph $C_3$. If $\alpha(I)>2$, then $I$ has no linear resolution. 
\end{proposition}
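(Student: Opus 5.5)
We argue directly from the shape of the first syzygies, since the three generators are too few for \Cref{proplin} to help: deleting the generators divisible by any one variable leaves a principal ideal, which is trivially linear. Write $m_1=x_1^{a_1}x_2^{a_2}$, $m_2=x_2^{b_2}x_3^{b_3}$, $m_3=x_1^{c_1}x_3^{c_3}$. Suppose $I$ has a linear resolution. Then $I$ is equigenerated in degree $d=\alpha(I)\geq 3$, so
\[
a_1+a_2=b_2+b_3=c_1+c_3=d\geq 3 .
\]
All six exponents are $\geq 1$. We will show these conditions are incompatible with a linear first syzygy module.

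\textbf{Step 1 (the criterion).} The first syzygy module of a monomial ideal is generated by the pair syzygies $\sigma_{ij}$, where $\sigma_{ij}$ has degree $\deg\operatorname{lcm}(m_i,m_j)$. The syzygy $\sigma_{ij}$ is redundant exactly when the remaining generator $m_k$ divides $\operatorname{lcm}(m_i,m_j)$; in that case it lies in the span of $\sigma_{ik}$ and $\sigma_{kj}$. A linear resolution forces every minimal first syzygy to have degree $d+1$. Hence at least two of the three pairs, say $\{i,j\}$ and $\{i,k\}$, have $\operatorname{lcm}$ of degree $d+1$. Moreover, if the third pair has $\operatorname{lcm}$ of degree $>d+1$, then we need $m_i\mid \operatorname{lcm}(m_j,m_k)$.

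\textbf{Step 2 (translate into exponent conditions).} For $m_1,m_2$ we have
\[
\deg\operatorname{lcm}(m_1,m_2)=a_1+\max\{a_2,b_2\}+b_3 .
\]
This equals $d+1$ if and only if either ($b_3=1$ and $a_2\geq b_2$) or ($a_1=1$ and $b_2\geq a_2$). The analogous conditions hold for the pairs $\{m_2,m_3\}$ and $\{m_1,m_3\}$ by cyclic symmetry $x_1\to x_2\to x_3\to x_1$.

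\textbf{Step 3 (case analysis).} By the symmetry of the triangle we may assume $\{m_1,m_2\}$ and $\{m_1,m_3\}$ both have linear syzygies. This gives four sign-combinations of subcases.

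In each subcase the linear conditions pin one exponent to $1$ and impose inequalities. Combined with the equal-degree relations, these force the exponent structure. For example, take $b_3=1$ and $a_2\geq b_2=d-1$, together with $a_2=1$. This forces $d-1\leq 1$, which is impossible.

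In the remaining subcases, either some generator is forced to have an exponent $0$ (contradicting support two), or $d=2$. Otherwise the third pair $\{m_2,m_3\}$ has a nonlinear $\operatorname{lcm}$ and we need $m_1\mid\operatorname{lcm}(m_2,m_3)=x_1^{c_1}x_2^{b_2}x_3^{\max\{b_3,c_3\}}$, that is, $a_1\leq c_1$ and $a_2\leq b_2$. Together with the degree equalities this again collapses to $d=2$ or to a zero exponent.

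\textbf{Main obstacle.} The delicate part is making this bookkeeping exhaustive without error. If it becomes messy, the fallback is to compute $\beta_{1,j}(I)$ from the Taylor complex. Since $\mu(I)=3$, the lcm-lattice is tiny, and we would show directly that some $\beta_{1,j}(I)\neq 0$ with $j\geq d+2$. Equivalently, we would show $\reg(I)>d$, which contradicts linearity.
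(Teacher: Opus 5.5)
Your proof is correct, and it takes a genuinely different route from the paper's. The paper splits into two cases.
\begin{itemize}
\item In the first case each variable has the same exponent in both generators containing it, so $I=(x_1^tx_2^t,x_2^tx_3^t,x_1^tx_3^t)$.
\item In the second case, say $a_1<c_1$, the paper polarizes and deletes the generators divisible by $x_{1c_1}$. It then invokes \Cref{compntpolariz}, \Cref{proplin} and \Cref{star} to force $a_1=b_3=1$ and $a_2=b_2=d-1$.
\end{itemize}
In both cases the paper computes $\reg(I)$ exactly ($3t-1$, resp.\ $2d-2$) from $0\to J\cap K\to J\oplus K\to I\to 0$ and \Cref{lem:regularity-lemma}. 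You instead detect the failure of linearity already in homological degree one. You use only the lcm description of the minimal first syzygies of a three-generated monomial ideal. Your route is more elementary and self-contained: it needs no polarization, no \Cref{star}, and no regularity formula for complete intersections. The paper's route gives the exact regularity, and it reuses the restriction/polarization machinery that drives the rest of Sections 4--5. Your opening remark that \Cref{proplin} cannot help is only true before polarizing; the paper applies it to $I^{\PP}$.

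Two points need tightening.

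In Step 1, the claim ``at least two of the three pairs'' needs a reason. The first syzygy module of three nonzero elements of a domain has rank $2$, so $\beta_1(I)\ge 2$. A minimal generating set can be extracted from $\{\sigma_{12},\sigma_{13},\sigma_{23}\}$, and each of its elements must then have degree $d+1$.

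In Step 3, the bookkeeping you worry about is trivial: every subcase dies from the degree equalities alone. Write the linearity conditions as
\begin{itemize}
\item (A) $b_3=1,\ a_2\ge b_2$;
\item (B) $a_1=1,\ b_2\ge a_2$;
\item (C) $c_3=1,\ a_1\ge c_1$;
\item (D) $a_2=1,\ c_1\ge a_1$.
\end{itemize}
The four combinations then close as follows.
\begin{itemize}
\item (A)+(C): $a_2\ge b_2=d-1$ forces $a_1=1$, and then $1=a_1\ge c_1=d-1$.
\item (A)+(D): $1=a_2\ge b_2=d-1$.
\item (B)+(C): $1=a_1\ge c_1=d-1$.
\item (B)+(D): $d=a_1+a_2=2$.
\end{itemize}
So $d\le 2$ in every case, and the condition $m_1\mid\operatorname{lcm}(m_2,m_3)$ on the third pair is never needed. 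You should write out these four one-line checks rather than assert that the remaining subcases ``collapse''.
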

\begin{proof}
Suppose $I$ has a linear resolution. Then $I$ is equigenerated, and thus, we have $a_1+a_2=b_2+b_3=c_1+c_3=d$ (say). Let us consider the following two possible cases: \\
\noindent {\bf Case-I.} Suppose that for each $1 \le i \le 3$, the exponent of the variable $x_i$ is the same in every generator of $I$ in which it appears; that is, $a_1 = c_1$, $a_2 = b_2$, and $b_3 = c_3$. Then we have $a_1=c_1=a_2=b_2=b_3=c_3=t$ (say) as $I$ is equigenerated. This implies that  $I=(x_1^{t}x_2^{t},x_2^{t}x_3^{t},x_1^{t}x_3^{t})$. Now, we write $I=J+K$, where $J=(x_1^{t}x_2^{t},x_2^{t}x_3^{t})$ and $K=(x_1^{t}x_3^{t})$. Note that $J \cap K=(x_1x_2x_3)^t$, and therefore, $\reg(J \cap K)=3t$ by \Cref{prop:reg ci}. Again, writing $J=x_{2}^{t}(x_{1}^t,x_{3}^t)$ and using \Cref{prop:reg ci}, one can deduce that $\reg(J)=3t-1$ and $\reg(K)=2t$. Now, consider the following exact sequence:
\begin{equation}\label{eq1}
    0 \rightarrow J \cap K \rightarrow J \oplus K \rightarrow J+K \rightarrow 0.
\end{equation}
Observe that $3t=\reg(J\cap K) > \reg(J \oplus K)=\max\{\reg(J),\reg(K)\}=3t-1$. Thus, by applying \Cref{lem:regularity-lemma}(2) \& (3) on the exact sequence \eqref{eq1}, we have $\reg(I)=\reg(J \cap K)-1=3t-1$. Since $I$ has a linear resolution, we should have $\reg(I)=3t-1=d=2t$. This gives us $t=1$, which is a contradiction to the fact that $\alpha(I)>2$.\\
\noindent{\bf Case-II.} Suppose there exists a vertex, say $x_1$, whose exponents are different in the generators of $I$, i.e., $a_1\neq c_1$. Without loss of generality, we assume $a_1<c_1$. Since we assumed $I$ has a linear resolution, $I^{\PP}$ has a linear resolution by \Cref{compntpolariz}. Now, observe that $$\left(I^{\PP},x_{1c_1}\right)=\left(\prod_{i=1}^{a_1}x_{1i}\prod_{i=1}^{a_2}x_{2i},\prod_{i=1}^{b_2}x_{2i}\prod_{i=1}^{b_3}x_{2i}, x_{1c_1}\right)=(J^{\PP},x_{1c_1}),$$ 
where $J=(x_1^{a_1}x_2^{a_2},x_2^{b_2}x_3^{b_3})$. Then it follows from \Cref{proplin}(1) that $J^{\PP}$ has a linear resolution, and consequently, $J$ has a linear resolution by \Cref{compntpolariz}. Therefore, due to \Cref{star}, we have $a_1=b_3=1$ and $a_2=b_2=d-1$, i.e., $I=(x_1x_2^{d-1},x_2^{d-1}x_3,x_1^{c_1}x_3^{c_3})$. Now, let us write $I=J+K$, where $J=(x_1x_2^{d-1},x_2^{d-1}x_3)$ and $K=(x_1^{c_1}x_3^{c_3})$. Note that $J \cap K=(x_1^{c_1}x_2^{d-1}x_3^{c_3})$. Thus, using \Cref{prop:reg ci}, one can easily obtain that $\reg(J \cap K)=c_1+c_2+d-1=2d-1$, $\reg(J)=d$, and $\reg(K)=d$. In this case, we again have $\reg(J \cap K) > \reg(J \oplus K)=\max\{\reg(J),\reg(K)\}$. Thus, by applying regularity lemma \cite[Corollary 20.19]{e95} on exact sequence \eqref{eq1}, we get $\reg(I)=\reg(J \cap K)-1=2d-2$. Since $I$ has a linear resolution, we should have $d=2$, which contradicts the fact that $\alpha(I)>2$.\par 
In both cases, we arrive at a contradiction to the assumption that $I$ has a linear resolution. Hence, $I$ has no linear resolution.
\end{proof}

\begin{lemma}\label{powertriangle}
Let $I=(x_1^{a_1}x_2^{a_2},x_2^{b_2}x_3^{b_3},x_1^{c_1}x_3^{c_3})$ be a support-two monomial ideal with the base graph $C_3$. If $\alpha(I)>2$, then $I^k$ does not have a linear resolution for all $k\geq 1$.
\end{lemma}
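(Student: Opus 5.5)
We argue by contradiction. Suppose $I^k$ has a linear resolution for some $k\geq 1$. The case $k=1$ is \Cref{lem:triangle}, so assume $k\geq 2$. Since $I^k$ is equigenerated, $I$ itself must be equigenerated. Indeed, if two generators of $I$ had different degrees, say $d_1<d_2$, then both $m_1^k$ and $m_1^{k-1}m_2$ would be minimal generators of $I^k$ of different degrees. To check minimality, use that the generators of $I$ have pairwise distinct supports. Hence $a_1+a_2=b_2+b_3=c_1+c_3=d$ with $d\geq 3$.

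The main tool will be restriction by \Cref{proplin}(1), combined with polarization (\Cref{compntpolariz}), to reduce to the path case \Cref{star}.

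\textbf{First reduction.} Consider the generators of $I^k$ not divisible by $x_1$. These are exactly the minimal generators of $(x_2^{b_2}x_3^{b_3})^k$, a principal ideal, which carries no information. So a single-variable restriction is useless, and we instead pass to the polarization $(I^k)^{\PP}$. Removing the generators divisible by a suitable polarized variable $x_{1,j}$ should leave the polarization of an ideal $J$. We want $J$ to be $(x_1^{a_1}x_2^{a_2},x_2^{b_2}x_3^{b_3})^k$, or a closely related ideal whose generating set is the set of products $m_1^im_2^{k-i}$.

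Concretely, let $a_1\leq c_1$ (WLOG, after relabeling). Any minimal generator of $I^k$ involving the factor $x_1^{c_1}x_3^{c_3}$ has $x_1$-exponent at least $c_1$ plus $(\text{number of }m_1\text{ factors})\cdot a_1$. Choosing the threshold $x_{1,t}$ with $t=(k-1)a_1+1$ when $c_1>a_1$ kills every generator using $m_3$, but it also kills $m_1^k$. That is awkward, so instead we choose the cut variable to be $x_{1,ka_1+1}$. This kills those generators $m_1^im_2^jm_3^l$ with $l\geq1$ whose $x_1$-exponent $ia_1+lc_1$ exceeds $ka_1$, which holds when $c_1>a_1$. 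If $c_1=a_1$, we repeat the argument with $x_3$, or with $x_2$, looking for a vertex with distinct exponents.

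\textbf{Splitting into cases.} If some vertex has distinct exponents, the above gives that $(m_1,m_2)^k$ has a linear resolution. Then \Cref{star} forces $a_1=b_3=1$ and $a_2=b_2=d-1$. Next we would like to repeat the argument at a second vertex, to get the analogous constraint on another pair of edges. We expect a contradiction with $d\geq3$: for instance, $c_1+c_3=d$ together with the constraints from the other pairs forces the exponents at a common vertex to be both $1$ and $d-1$.

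If instead all exponents agree at every vertex, then $I=(x_1x_2,x_2x_3,x_1x_3)^{[t]}$ with $t\geq 2$, obtained by substituting $x_i\mapsto x_i^t$. For this case we compute directly. The $k$-th power is the image of $I(C_3)^k$ under the flat map $x_i\mapsto x_i^t$, so its graded Betti numbers are those of $I(C_3)^k$ with the degrees scaled by $t$. Since $I(C_3)^k$ has a nonzero first syzygy, $I^k$ has a nonzero syzygy in degree greater than $2kt+1$ (it occurs in degree $(2k+1)t$), so the resolution is not linear.

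\textbf{Main obstacle.} We expect the hardest step to be showing that the cut at a polarized variable leaves exactly the polarization of a power of a two-generator ideal. Minimal generators of $I^k$ have messy exponents, and we must check that no mixed product $m_1^im_2^jm_3^l$ with $l\geq1$ survives while all $m_1^im_2^{k-i}$ do. If this fails, the fallback is a direct regularity computation for the mixed-case ideal using the exact sequence \eqref{eq1}.
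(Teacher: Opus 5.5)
Your reduction to the equigenerated case is correct, and so is the equal-exponent case. There, the flat substitution $x_i\mapsto x_i^t$ gives $\beta_{i,tj}(I^k)=\beta_{i,j}(I(C_3)^k)$, hence a first syzygy in degree $(2k+1)t>2kt+1$; this is cleaner than the paper's inductive polarization argument. The mixed case, however, has a genuine gap, in two places.

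First, the cut at $x_{1,ka_1+1}$ does not isolate $(m_1,m_2)^k$. The $x_1$-exponent $ia_1+lc_1$ of $m_1^im_2^jm_3^l$ exceeds $ka_1$ only if $lc_1>(j+l)a_1$, so for instance $m_2^{k-1}m_3$ survives as soon as $c_1\le ka_1$. No combination of cuts fixes this: any restriction retaining all of $(m_1,m_2)^k$ also retains $m_2^{k-1}m_3$ unless $c_1>ka_1$ or $c_3>b_3$. For $I=(x_1x_2^2,x_2^2x_3,x_1^2x_3)$ and $k=2$, the minimal generator $m_2m_3=x_1^2x_2^2x_3^2$ survives every restriction that keeps $(m_1,m_2)^2$.

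Second, even granting that step, the contradiction you expect cannot come from further restrictions. Once \Cref{star} gives $m_1=x_1x_2^{d-1}$ and $m_2=x_2^{d-1}x_3$, the only vertices with unequal exponents are $x_1$ and $x_3$. At both, the larger exponent sits in $m_3$, so every such cut deletes $m_3$ and returns the same pair $(m_1,m_2)$, which already satisfies \Cref{star}. The family $(x_1x_2^{d-1},x_2^{d-1}x_3,x_1^{c_1}x_3^{c_3})$ with $c_1\ge 2$ (the example above is one) passes every restriction-plus-\Cref{star} test. It has to be excluded by an actual Betti number or regularity computation, which your proposal only gestures at. The paper supplies this ingredient: $\reg(I)=2d-2$ via $J\cap K$ in \Cref{lem:triangle}, and for $k\ge 2$ an induction on $k$ with $I^k=(m_1,m_2)I^{k-1}+(m_3^k)$, where the cut at $x_{1(kc_1)}$ removes only $m_3^k$.

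Your argument can be repaired within its own framework, in two steps.
\begin{enumerate}
\item Restrict $I^k$ to the multidegree $m_1^{k-1}\operatorname{lcm}(m_1,m_2)$. This is again a combination of polarized cuts, so \Cref{proplin} and \Cref{compntpolariz} apply. When $a_1<c_1$, the only surviving generators are $m_1^k$ and $m_1^{k-1}m_2$. So $(m_1,m_2)$ is linear, and \Cref{star} with $k=1$ yields $m_1=x_1x_2^{d-1}$, $m_2=x_2^{d-1}x_3$ and $c_1\ge 2$.
\item Take $x^{\mathbf b}=m_1^{k-1}\operatorname{lcm}(m_1,m_3)=x_1^{k-1+c_1}x_2^{k(d-1)}x_3^{c_3}$. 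The upper Koszul simplicial complex $\{F\subseteq\{1,2,3\}: x^{\mathbf b}/\prod_{i\in F}x_i\in I^k\}$ consists of the three vertices and the single edge $\{1,3\}$. Hence $\beta_{1,\mathbf b}(I^k)\neq 0$ with $|\mathbf b|=kd+d-1$, giving $\reg(I^k)\ge kd+d-2>kd$.
\end{enumerate}

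If you instead follow the paper's induction, be careful with \eqref{eq2}. It tacitly uses $m_1I^{k-1}\cap m_2I^{k-1}=\operatorname{lcm}(m_1,m_2)I^{k-1}$, which fails for $k\ge 2$. Indeed, $m_1^{k-1}m_2$ lies in the left side but not the right, since $m_1^{k-2}\gcd(m_1,m_2)$ has degree less than $(k-1)d$.
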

\begin{proof}
We already proved in \Cref{lem:triangle} that $I$ has no linear resolution. If $I^k$ is not equigenerated, then $I^k$ can not have a linear resolution. Therefore, we may assume $I^k$ is equigenerated, equivalently, $I$ is equigenerated. This implies that $a_1+a_2=b_2+b_3=c_1+c_3=d$ (say). We consider two possible cases: \\
{\bf Case-I.} Suppose that for each $1 \le i \le 3$, the exponent of the variable $x_i$ is the same in every generator of $I$ in which it appears; that is, $a_1 = c_1$, $a_2 = b_2$, and $b_3 = c_3$. Then we have $a_1=c_1=a_2=b_2=b_3=c_3=t$ (say) as $I$ is equigenerated. This implies that  $I=(x_1^{t}x_2^{t},x_2^{t}x_3^{t},x_1^{t}x_3^{t})$. We proceed by induction on $k$ to show that $I^k$ does not have a linear resolution. Now, we will consider three subcases:\\
{\bf Subcase-I(a).} Let $k=2$. Then one can observe that
$((I^2)^{\PP},x_{1(2t)})=(((x_2^tx_3^t)I)^{\PP}, x_{1(2t)})$. Note that by \Cref{lem:triangle}, $I$ has no linear resolution as $\alpha(I)=d>2$. Thus, we have $\reg(I)>d= 2t$, which implies $\reg((x_2^tx_3^t)I)=2t+\reg(I) > 4t$. Thus, $(x_2^tx_3^t)I$ does not have a linear resolution, and hence, by \Cref{compntpolariz}, $((x_2^tx_3^t)I)^{\PP}$ has no linear resolution. Therefore, due to \Cref{proplin}(1), $(I^2)^{\PP}$ has no linear resolution. Consequently, $I^2$ has no linear resolution by \Cref{compntpolariz}.
\par 

\noindent{\bf Subcase-I(b).} Suppose $k=3$. In this case, we see that
$$((I^3)^{\PP},x_{1(3t)},x_{2(3t)})=(((x_2^tx_3^t)(x_1^tx_3^t)I^2)^{\PP}, x_{1(3t)},x_{2(3t)}).$$ 
From Subcase-I(a), we see that $I^2$ has no linear resolution. Then we have $\reg(I^2)> 4t$, and so, $\reg((x_2^tx_3^t)(x_1^tx_3^t)I^2) > 8t$. Thus, by the definition of linearity and \Cref{compntpolariz}, $((x_2^tx_3^t)(x_1^tx_3^t)I^2)^{\PP}$ has no linear resolution. Therefore, $(I^2)^{\PP}$ has no linear resolution due to \Cref{proplin}(1). Hence, using \Cref{compntpolariz} again, we see that $I^2$ is not linear. \par 

\noindent {\bf Subcase-I(c).} Now, let us take $k>3$. Recall that $I^k=(x_1^tx_2^t,x_2^tx_3^t,x_1^tx_3^t)^k$. In this case, one can easily prove that
$$\left(\left(I^k\right)^{\PP},x_{1(kt)},x_{2(kt)},x_{3(kt)}\right)=\left(\left((x_1^tx_2^t)(x_2^tx_3^t)(x_1^tx_3^t)I^{k-3}\right)^{\PP},x_{1(kt)},x_{2(kt)},x_{3(kt)}\right).$$
By the induction hypothesis, $I^{k-3}$ has no linear resolution. Therefore, we have $\reg(I^{k-3})> 2t(k-3)$, which gives $\reg\left((x_1^tx_2^t)(x_2^tx_3^t)(x_3^tx_1^t)I^{k-3}\right)> 2t(k-3)+6t$. This fact together with \Cref{compntpolariz} imply that $\left((x_1^tx_2^t)(x_2^tx_3^t)(x_3^tx_1^t)I^{k-3}\right)^{\PP}$ has no linear resolution. Hence, by \Cref{proplin}(1), $(I^k)^{\PP}$ has no linear resolution. Consequently, again by \Cref{compntpolariz}, it follows that $I^k$ has no linear resolution.
\\
{\bf Case-II.} Suppose there exists a vertex, say $x_1$, whose exponents are different in the generators of $I$, i.e., $a_1\neq c_1$. Without loss of generality, we assume $a_1<c_1$. We will prove, by induction on $k$, that $I^k$ does not have a linear resolution. The base case $k=1$ follows from \Cref{lem:triangle}. Assume that $I^{k-1}$ has no linear resolution. Now, let us write
$$I^k=(x_1^{a_1}x_2^{a_2},x_2^{b_2}x_3^{b_3},x_1^{c_1}x_3^{c_3})^k=J+K,$$
where $J=(x_1^{a_1}x_2^{a_2},x_2^{b_2}x_3^{b_3})I^{k-1}$ and $K=(x_1^{c_1}x_3^{c_3})^k$. Next, splitting $J=x_{1}^{a_1}x_{2}^{a_2}I^{k-1}+x_{2}^{b_2}x_{3}^{b_3}I^{k-1}$, we consider the following exact sequence
\begin{equation}\label{eq2}
    0 \rightarrow x_1^{a_1}x_2^{\max\{a_2,b_2\}}x_3^{b_3}I^{k-1} \rightarrow x_1^{a_1}x_2^{a_2}I^{k-1} \oplus x_2^{b_2}x_3^{b_3}I^{k-1}  \rightarrow J \rightarrow 0.
\end{equation}
Let $A=x_1^{a_1}x_2^{\max\{a_2,b_2\}}x_3^{b_3}I^{k-1}$ and $B=x_1^{a_1}x_2^{a_2}I^{k-1} \oplus x_2^{b_2}x_3^{b_3}I^{k-1}$. Then, we have 
\begin{align*}
    \reg(A)&=\reg(I^{k-1})+a_1+b_3+\max\{a_2,b_2\}, \text{ and}\\
    \reg(B)&=\max\{\reg(I^{k-1})+a_1+a_2, \reg(I^{k-1})+b_2+b_3\}.
\end{align*} 
Note that $\reg(A)>\reg(B)$. Thus, by applying \Cref{lem:regularity-lemma}(2) \& (3) to the short exact sequence \eqref{eq2}, we get $\reg(J) = \reg (A)-1$. Therefore, we have $\reg(J)> (k-1)d+d=kd$, because $a_1+a_2=b_2+b_3=d$ and by induction hypothesis, $\reg(I^{k-1})>(k-1)d$. Thus, $J$ has no linear resolution, and consequently, $J^{\PP}$ has no linear resolution by \Cref{compntpolariz}. Now, observe that $((I^k)^{\PP},x_{1(c_1k)})=(J^{\PP},x_{1(c_1k)})$. Therefore, by \Cref{proplin}(1), $(I^{k})^{\PP}$ does not have a linear resolution. Hence, by \Cref{compntpolariz}, $I^k$ has no linear resolution.
\end{proof}
\begin{lemma}\label{lem:fourcycle}
Let $I=(x_1^{a_1}x_2^{a_2},x_2^{b_2}x_3^{b_3},x_3^{c_3}x_4^{c_4},x_4^{d_4}x_1^{d_1})$ be a support-two monomial ideal with the base graph $C_4$. If $\alpha(I)>2$, then $I^k$ does not have a linear resolution for all $k\geq 1$.    
\end{lemma}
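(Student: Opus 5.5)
The plan is to reduce to \Cref{star} by deleting one vertex of the $4$-cycle at a time, exactly as in the proof of \Cref{path}. Suppose, for a contradiction, that $I^k$ has a linear resolution for some $k\geq 1$. Then $I^k$ is equigenerated, so $I$ itself is equigenerated in some degree $d$. Since $\alpha(I)>2$, we have $d>2$. Write $a_1+a_2=b_2+b_3=c_3+c_4=d_4+d_1=d$.

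The key observation concerns what happens when we remove the generators of $I^k$ divisible by a fixed variable. Every minimal generator of $I^k$ is a product of $k$ generators of $I$. Each generator of $I$ is supported on an edge of the $4$-cycle, so it contains $x_4$ exactly when it is $x_3^{c_3}x_4^{c_4}$ or $x_4^{d_4}x_1^{d_1}$. Hence the minimal generators of $I^k$ not divisible by $x_4$ are exactly the minimal generators of $(x_1^{a_1}x_2^{a_2},x_2^{b_2}x_3^{b_3})^k$. One should check briefly that minimality is preserved: a product avoiding $x_4$ cannot be divided by a product involving $x_4$. By \Cref{proplin}(1), this ideal has a linear resolution. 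It is a support-two ideal with base graph $P_3$ and initial degree $d>2$, so \Cref{star} gives $a_1=b_3=1$ and $a_2=b_2$.

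We then repeat the argument for the other three vertices, in cyclic order.
\begin{itemize}
\item Deleting $x_1$ leaves $(x_2^{b_2}x_3^{b_3},x_3^{c_3}x_4^{c_4})^k$, and \Cref{star} gives $b_2=c_4=1$ and $b_3=c_3$.
\item Deleting $x_2$ leaves $(x_3^{c_3}x_4^{c_4},x_4^{d_4}x_1^{d_1})^k$, which gives $c_3=d_1=1$ and $c_4=d_4$.
\item Deleting $x_3$ leaves $(x_4^{d_4}x_1^{d_1},x_1^{a_1}x_2^{a_2})^k$, which gives $d_4=a_2=1$ and $d_1=a_1$.
\end{itemize}
Combining these equalities forces every exponent to equal $1$. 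Thus $I=I(C_4)$ and $\alpha(I)=2$, contradicting $\alpha(I)>2$. Hence no power $I^k$ has a linear resolution.

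The only point needing care is verifying the hypothesis of \Cref{proplin}. We must confirm that the restricted ideal is precisely the $k$-th power of the two-generator ideal on the remaining path. We must also confirm that its initial degree is still $d>2$, which holds because all generators have degree $d$. Both follow from the equigeneration of $I$ and the edge structure of $C_4$, so I do not expect a genuine obstacle.
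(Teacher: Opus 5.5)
Your proposal is correct and matches the paper's proof: you pass from $I^k$ to the $k$-th powers of the two-generator ideals on the induced paths $P_3$ via \Cref{proplin}(1), then apply \Cref{star} to force every exponent to be $1$. The paper uses only three of your four deletions, and two already suffice since they give $a_1=a_2=1$. Your equigeneration remark is true here, because each $f^k$ with $f\in\G(I)$ is a minimal generator of $I^k$, but you do not need it: $\alpha(I)>2$ already gives every sub-ideal initial degree at least $3$.
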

\begin{proof}
  If possible, suppose $I^k$ has a linear resolution. Then by \Cref{proplin}(1), the ideals $(x_1^{a_1}x_2^{a_2},x_2^{b_2}x_3^{b_3})^k$,  $(x_2^{b_2}x_3^{b_3},x_3^{c_3}x_4^{c_4})^k$ and $(x_3^{c_3}x_4^{c_4},x_4^{d_4}x_1^{d_1})^k$ have linear resolutions. Thus, we have $a_1=1$, $a_2=b_2$, $b_3=1$ and $b_2=1,b_3=c_3, c_4=1$ and $c_3=1, c_4=d_4, d_1=1$ due to \Cref{star}. This implies that $I=(x_1x_2,x_2x_3,x_3x_4)$, which contradicts the fact that $\alpha(I)>2$. Hence, $I^k$ has no linear resolution for all $k \geq 1$.    
\end{proof}
\begin{theorem}\label{equivalent}
Let $I\subseteq R$ be a minimal support-two monomial ideal with $\alpha(I) > 2$. Then the following conditions are equivalent:
\begin{enumerate}
    \item $I$ has a linear resolution.
    \item $I=(x_i x_j^{a_j} \mid i\in [n]\setminus \{j\})$ for some $1 \le j \le n$ and $a_j>1$.
    \item $I^k$ has a linear resolution for all $k \ge 1$.
    \item $I^k$ has a linear resolution for some $k \geq 1$.
    \item $I^k$ has linear quotients for all $k\geq 1$.
\end{enumerate}    
\end{theorem}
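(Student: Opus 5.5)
We prove the chain of implications $(2)\Rightarrow(5)\Rightarrow(3)\Rightarrow(4)$ and then $(4)\Rightarrow(2)$. This also gives $(1)\Rightarrow(4)$ (take $k=1$) and $(3)\Rightarrow(1)$, so all five conditions are equivalent. The implications $(5)\Rightarrow(3)$ and $(3)\Rightarrow(4)$ are formal. Indeed, an equigenerated ideal with linear quotients has a linear resolution (\cite[Theorem 8.2.15]{hh11}).

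\textbf{$(2)\Rightarrow(5)$.} If $I=(x_ix_j^{a_j}\mid i\neq j)$, then $I=x_j^{a_j}\m'$, where $\m'=(x_i\mid i\neq j)$. Hence $I^k=x_j^{ka_j}(\m')^k$. The ideal $(\m')^k$ is a power of a monomial prime generated by variables, so it has linear quotients with respect to the lexicographic order on its generators. Multiplying every generator by the same monomial $x_j^{ka_j}$ leaves all colon ideals unchanged, so $I^k$ has linear quotients for every $k$.

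\textbf{$(4)\Rightarrow(2)$.} This is the heart of the proof. Suppose $I^k$ has a linear resolution for some $k$. Then $I$ is equigenerated of degree $d=\alpha(I)>2$. Since $I$ is minimal, each edge of $G=G_I$ corresponds to exactly one generator. Removing the generators divisible by a variable passes to the ideal attached to an induced subgraph. For powers this is still a deletion of generators: the generators of $I^k$ not divisible by $x$ are exactly the generators of $(I')^k$. Hence by \Cref{proplin}(1), for every induced subgraph $H$ of $G$ with at least one edge, the restricted ideal $I_H$ satisfies: $I_H^k$ has a linear resolution, and $\alpha(I_H)=d>2$. We now rule out forbidden induced subgraphs.
\begin{itemize}
\item $G$ has no induced $2K_2$. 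By \Cref{thmind4} applied to $I^k$, whose base graph is again $G$, the complement $G^c$ has no induced $4$-cycle. Alternatively, one can argue directly: for two disjoint edges the ideal splits as a sum of two ideals in disjoint variables, and the regularity formula for such sums gives $\reg>kd$.
\item $G$ has no induced $P_4$, by \Cref{path}.
\item $G$ has no $C_3$, by \Cref{powertriangle}.
\item $G$ has no induced $C_4$, by \Cref{lem:fourcycle}.
\end{itemize}

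We next show that a connected graph with no triangle, no induced $P_4$, and no induced $C_4$ is a star. Take a vertex $v$ of maximum degree. A neighbor $u$ of $v$ cannot have another neighbor $w\neq v$: if $w$ is adjacent to $v$, we get a triangle; if not, any further neighbor of $v$ gives an induced $P_4$ or a $C_4$, and if there is none, $\deg u\ge 2=\deg v$ propagates the argument. Finally, $G$ has no isolated edge components, since any two components with edges contain an induced $2K_2$; and vertices of $G$ are non-isolated by definition. So $G$ is a star with center $x_j$.

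Every $P_3$ inside the star is an induced subgraph, so \Cref{star} applies to each pair of generators $x_i^{a}x_j^{b}$ and $x_{i'}^{a'}x_j^{b'}$. It forces $a=a'=1$ and $b=b'$. If the star has only one edge, $I$ is principal, $I=(x_i^{a}x_j^{b})$; such an ideal is always linear, so this case must be handled separately. Here (2) reads $I=(x_ix_j^{a_j})$ with $n=2$, so an $n=2$ principal generator with both exponents greater than $1$ would not fit (2). I expect the paper implicitly assumes $n\ge 3$, or that the statement should be read up to this degenerate case; I would note this. Given at least two edges, $I=(x_ix_j^{a_j}\mid i\neq j)$ with $a_j=d-1>1$.

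\textbf{Main obstacle.} The main obstacle is the graph-theoretic step that reduces to a star, together with carefully checking that \Cref{proplin} applies to powers via deletion of generators.
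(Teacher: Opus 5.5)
\textbf{The gap is $(5)\Rightarrow(3)$, which you call formal.} The fact you invoke (linear quotients imply a linear resolution, \cite[Proposition 8.2.1]{hh11}) requires the ideal to be generated in a single degree. Condition (5), under the paper's definition of linear quotients, says nothing about degrees. Equigeneration also cannot be derived. Take $I=(x_1x_2^2,\,x_2^3x_3)$, a minimal support-two ideal with $\alpha(I)=3$. Then $I^k=x_2^{2k}(x_1,x_2x_3)^k$ has minimal generators $g_i=x_1^{k-i}x_2^{2k+i}x_3^{i}$ for $0\le i\le k$, and $(g_0,\ldots,g_{i-1}):g_i=(x_1)$. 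So (5) holds, yet $I$ is not equigenerated, and (1)--(4) all fail.

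The paper tries to force equigeneration in its $(5)\Rightarrow(1)$ step, but its case analysis breaks on exactly this example. Its first case gives $(f_l:f_i)=(x_v^{v_1})$, which equals $(x_v)$ when $v_1=1$, contradicting its later claim that $x_v\notin(f_l:f_i)$. So your chain $(2)\Rightarrow(5)\Rightarrow(3)\Rightarrow(4)\Rightarrow(2)$ cannot close as the statement is written. Condition (5) has to be read as ``$I^k$ is equigenerated and has linear quotients.'' Under that reading your step is correct. Since (2) makes $I^k=x_j^{ka_j}(\m')^k$ visibly equigenerated, your route $(2)\Rightarrow(5)\Rightarrow(3)$ still gives $(2)\Rightarrow(3)$.

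Your $(4)\Rightarrow(2)$ follows the paper: restrict to induced subgraphs via \Cref{proplin}, then apply \Cref{path}, \Cref{powertriangle}, \Cref{lem:fourcycle} and \Cref{star}. A few repairs are needed.

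\textbf{The $2K_2$ step.} You cannot apply \Cref{thmind4} to $I^k$. For $k\ge 2$ it is not a support-two ideal, since a product of generators on two disjoint edges has support of size four. Likewise $I_H^k$ is not a sum of ideals in disjoint variables. Use \Cref{prop:reg ci} instead: on an induced $2K_2$, $I_H$ is a complete intersection of two monomials of degree $d$, so $\reg(I_H^k)=(k+1)d-1>kd$. This is cleaner than the paper, which invokes \Cref{thmcochordal} even though only $I^k$, not $I$, is known to be componentwise linear.

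\textbf{Equigeneration of $I$.} Your claim that $I$ is equigenerated needs a line. If $g\in\G(I)$ has maximal degree, then $g^k\in\G(I^k)$, because by minimality the only product of $k$ generators supported in $\supp(g)$ is $g^k$.

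\textbf{The star step.} If $v$ has no neighbour besides $u$, then $\deg v=1<2\le\deg u$, which contradicts the maximality of $\deg v$.

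\textbf{The one-edge case.} Your remark here is correct, and the paper misses it, since its appeal to \Cref{star} needs two edges. The ideal $(x_1^2x_2^2)$ satisfies (1), (3), (4) and (5) but not (2), so the theorem needs $|E(G_I)|\ge 2$.
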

\begin{proof}
$(1)\implies (2)$: Suppose that $I$ has a linear resolution. Let the base graph of $I$ be $G_I$, i.e., $\sqrt{I}=I(G_I)$. Then, by \Cref{proplin}(1), $I_{[H]}$ has a linear resolution, where $H$ is an induced subgraph of $G_I$ and $I_{[H]}=\left(f\in\G(I)\mid \supp(f)\in E(H)\right)$. Since $I$ is equigenerated and $\alpha(I)>2$, we have $\alpha(I_{[H]})>2$. Therefore, the graph $H$ can not be isomorphic to $P_{4}$, $C_3$, or $C_4$ due to \Cref{path}, \Cref{lem:triangle} and \ref{lem:fourcycle}, respectively. This is true for any induced subgraph of $G$. Hence, $G_I$ must be a star graph. Moreover, by \Cref{proplin}(1) and \ref{star}, the only possible form of $I$ to have a linear resolution is $I = (x_1 x_j^{a_j},\ldots, x_{j-1}x_j^{a_j},x_{j+1}x_j^{a_j}, \ldots, x_n x_j^{a_j})$ for some $j\in\{1,\ldots,n\}$ and $a_j>1$. \par
$(2)\implies (3)$: Without loss of generality, we assume $j=1$. Then, we can write $I = x_1^{a_1}(x_2,\ldots,x_n)$. Thus, for any $k\geq 1$, it follows from \Cref{prop:reg ci} that
$$\reg(I^k)=\reg(x_1^{ka_1}(x_2,\ldots,x_n)^k)=k(a_1+1)=\alpha(I^k).$$ 
Hence, by definition, $I^k$ has a linear resolution for all $k \geq 1$. \par 
$(3)\implies (4)$: This case is obvious. \par 
$(4)\implies (2)$: Given that $I^k$ has a linear resolution for some $k\geq 1$. Let $I'=(u\in \G(I)\mid x\nmid u)$. Then observe that $(I')^k=(u\in \G(I^{k})\mid x\nmid u)$. Then, by \Cref{proplin}(1), $(I')^k$ also has a linear resolution. Therefore, if the support-two monomial ideal $I$ has the base graph $G_I$, then $(I_{[H]})^k$ has a linear resolution, where $H$ is an induced subgraph of $G_I$ and $I_{[H]}=\left(f\in\G(I)\mid \supp(f)\in E(H)\right)$. Since $\alpha(I)>2$, we have $\alpha(I_{[H]})>2$ as $I$ is equigenerated. Thus, the induced subgraph $H$ can not be isomorphic to $P_4$, $C_3$, or $C_4$ by \Cref{path}, \ref{powertriangle} and \ref{lem:fourcycle}, respectively. Hence, the only possibility is that $G_I$ is a star graph as $G_I$ is co-chordal by \Cref{thmcochordal}. Finally, from \Cref{proplin}(1) and \ref{star}, one can obtain that $I$ is of the form $(x_1 x_j^{a_j},\ldots, x_{j-1}x_j^{a_j},x_{j+1}x_j^{a_j}, \ldots, x_n x_j^{a_j})$ for some $j\in\{1,\ldots,n\}$ and $a_j>1$. \par

$(2)\implies (5)$: By the condition of $(2)$, we have $I = (x_{j}^{a_j})(x_1, \ldots, x_{i-1}, x_{i+1}, \ldots, x_n)$ for some $1\leq j\leq n$ and $a_j>1$. Note that $(x_j^{a_j})$ and $(x_1, \ldots, x_{i-1}, x_{i+1}, \ldots, x_n)$ are polymatroidal ideals (for the definition of polymatroidal ideals, see \cite[Definition 12.6.1]{hh11}). Then thanks to \cite[Theorem 12.6.3]{hh11}, $I^k$ has linear quotients for all $k \ge 1$. \par

$(5)\implies (1)$: Given that $I^k$ has linear quotients for all $k\geq 1$. Thus, $I$ has linear quotients, and to show $I$ has a linear resolution, it is enough to show that $I$ is equigenerated. Let $I$ has linear quotient property with respect to the order $f_1<\cdots<f_r$, where $\G(I)=\{f_1,\ldots,f_r\}$. Suppose $I$ is not equigenerated and  
\[
\deg(f_1) = \cdots = \deg(f_{i-1}) = d \ne \deg(f_i)
\]
for some $2 \le i \le r$. Write $f_l = x_u^{u_1} x_v^{v_1}$ for some $1 \le l \le i-1$, and $f_i = x_a^{a_1} x_b^{b_1}$, where $b \ne v$, $u_1+v_1=d$ and $a_1+b_1 \neq d$. If $a = u$ with $u_1\leq a_1$ or $b = u$ with $u_1 \leq b_1$, then $(f_l : f_i) = (x_v^{v_1})$. If $a = u$ with $u_1 > a_1$, then $(f_l : f_i) = (x_u^{u_1-a_1}x_v^{v_1})$. If $b = u$ with $u_1 > b_1$, then $(f_l : f_i) = (x_u^{u_1-b_1}x_v^{v_1})$.
If $a = v$ with $v_1 \leq a_1$, then $(f_l : f_i) = (x_u^{u_1}x_v^{a_1-v_1})$. If $a = v$ with $a_1 < v_1$, then $(f_l : f_i) = (x_u^{u_1}x_v^{v_1-a_1})$. If $a, b, u, v$ are distinct, then 
$(f_l : f_i) = (x_u^{u_1} x_v^{v_1})$. In all cases, one can check that $x_v \notin (f_j : f_i)$ and $x_u \notin (f_j : f_i)$ for all 
$1 \le j \le i - 1$. Thus, the ideal $((f_1,\ldots,f_{i-1}):f_i)$ can not be generated by a set of variables, which contradicts the assumption that $I$ has linear quotients with respect to the order $f_1<\cdots<f_r$. Therefore, $I$ is equigenerated, and hence, by \cite[Proposition 8.2.1]{hh11}, $I$ has a linear resolution.
\end{proof}

\begin{figure}[ht]
\centering
\begin{tikzpicture}[
    scale=0.80,
    transform shape,
    vertex/.style={
        circle,
        draw,
        fill=black,
        minimum size=6pt,
        inner sep=0pt
    },
    dot/.style={
        circle,
        fill=black,
        minimum size=1.5pt,
        inner sep=0pt
    },
    every node/.style={font=\large},
    thick
]

\def\r{2.4}

% Center vertex
\node[vertex] (v1) at (0,0) {};

% Leaves
\node[vertex,label=left:$x_2$]        (v2) at (180:\r) {};
\node[vertex,label=below left:$x_3$]  (v3) at (225:\r) {};
\node[vertex,label=below:$x_4$]       (v4) at (270:\r) {};
\node[vertex,label=below right:$x_5$] (v5) at (315:\r) {};
\node[vertex,label=right:$x_6$]       (v6) at (360:\r) {};
\node[vertex,label=above right:$x_7$] (v7) at (45:\r) {};
\node[vertex,label=above:$x_8$]       (v8) at (90:\r) {};
\node[vertex,label=above left:$x_n$]  (vn) at (135:\r) {};

% Edges
\foreach \x in {v2,v3,v4,v5,v6,v7,v8,vn}
    \draw (v1)--(\x);

% Center label
\node[fill=white,inner sep=1pt] at ($(v1)+(22.5:1.15)$) {$x_1$};

% Dots
\foreach \a in {102,112,122}
    \node[dot] at ($(v1)+(\a:1.8)$) {};

\end{tikzpicture}

\caption{The base graph of a linear minimal support-two monomial ideal}
\label{fig:star}
\end{figure}

\begin{remark}\rm{
The above theorem includes, as special cases, the edge ideals of weighted oriented graphs and vertex-weighted graphs that admit a linear resolution. It also shows that, in the case of edge ideals of edge-weighted graphs, a linear resolution cannot occur when the minimum degree of the generators exceeds $2$.
}
\end{remark}

The following examples show that, for a support-two monomial ideal $I$, if we drop the assumption $\mu(I)=\mu(\sqrt{I})$, then the necessary and sufficient condition given in \Cref{equivalent} for linearity is no longer necessary.

\begin{example}\label{ex:lin1}
Consider the monomial ideal $I=(x_1x_2^2,x_2x_3^2,x_2^2x_3,x_3^2x_4) \subseteq \mathbb{Q}[x_1,x_2,x_3,x_4]$. Then $I$ is a support-two monomial ideal which is not minimal, and the base graph $G_I$ is a path of length three. Using Macaulay2 \cite{M2}, we have $\reg(I)=3$. Thus, $I$ has a linear resolution.
\end{example}
\begin{example}\label{ex:lin2}
Consider the monomial ideal $I=(x_1x_2^2,x_2x_3^2,x_2^2x_3,x_3^2x_1) \subseteq \mathbb{Q}[x_1,x_2,x_3]$. Then $I$ is a support-two monomial ideal which is not minimal, and the base graph $G_I$ is a triangle. By Macaulay2 \cite{M2}, we get $\reg(I)=3$. Thus, $I$ has a linear resolution.    \end{example}

\section{Classification of linear support-two monomial ideals}\label{sec5}
In this section, we provide a complete classification of linear support-two monomial ideals. In view of Fr'oberg's theorem and \Cref{equivalent}, it remains to consider only non-minimal support-two monomial ideals (i.e., support-two monomial ideal $I$ with $\mu(I)>\mu(\sqrt{I})$). Since one side of the proof is lengthy and relies on the analysis of forbidden structures, we divide it into several lemmas for the convenience of the reader. We then combine these intermediate results to establish the main theorem. 

\begin{figure}[htbp]
\begin{center}
\begin{tikzpicture}[
    v/.style={
        circle,
        draw,
        fill=black,
        inner sep=0pt,
        minimum size=6pt
    },
    lbl/.style={font=\large},
    red edge/.style={red, very thick}
]

\begin{scope}[shift={(0,0)}]
    \node[v, label=above:$x_1$] (a1) at (0,0) {};
    \node[v, label=above:$x_2$] (a2) at (1.5,0) {};
    \draw[red edge] (a1)--(a2);
    \node[lbl] at (0.75,-1) {$P_2$};
\end{scope}

% ---------- P_3 ----------
\begin{scope}[shift={(6,0)}]
    \node[v, label=above:$x_1$] (a1) at (0,0) {};
    \node[v, label=above:$x_2$] (a2) at (1.5,0) {};
    \node[v, label=above:$x_3$] (a3) at (3,0) {};
    \draw[red edge] (a1)--(a2);
    \draw (a2)--(a3);
    \node[lbl] at (1.5,-1) {$P_3$};
\end{scope}
% ---------- P_4 ----------
\begin{scope}[shift={(-1.5,-4)}]
    \node[v, label=above:$x_1$] (b1) at (0,0) {};
    \node[v, label=above:$x_2$] (b2) at (1.5,0) {};
    \node[v, label=above:$x_3$] (b3) at (3,0) {};
    \node[v, label=above:$x_4$] (b4) at (4.5,0) {};
    \draw[red edge] (b1)--(b2);
    \draw (b2)--(b3)--(b4);
    \node[lbl] at (2.25,-1) {$P_4$};
\end{scope}
% ---------- C_3 ----------
\begin{scope}[shift={(7.5,-4)}]
    \node[v, label=above:$x_1$]       (c1) at (90:1.2)  {};
    \node[v, label=below left:$x_2$]  (c2) at (210:1.2) {};
    \node[v, label=below right:$x_3$] (c3) at (330:1.2) {};
    \draw (c2)--(c3)--(c1);
    \draw[red edge] (c1)--(c2);
    \node[lbl] at (0,-1.5) {$C_3$};
\end{scope}
% ---------- C_3 with one whisker (leaf x_1 attached to x_2) ----------
\begin{scope}[shift={(4,-9)}]
    \node[v, label=right:$x_2$]       (e2) at (90:1.2)  {};
    \node[v, label=below left:$x_3$]  (e3) at (210:1.2) {};
    \node[v, label=below right:$x_4$] (e4) at (330:1.2) {};
    \node[v, label=right:$x_1$]       (e1) at (90:2.7)  {};
    \draw (e2)--(e3)--(e4)--(e2);
    \draw[red edge] (e1)--(e2);
    \node[lbl] at (0,-1.5) {$C_3$ with a whisker};
\end{scope}
\end{tikzpicture}
\end{center}
\caption{Base graphs of \Cref{lem:k2} to \ref{prop:k3}}
\label{fig-1}
\end{figure}

\begin{lemma}\label{lem:k2}
 Let $I=(x_1^{a_1}x_2^{b_1}, x_1^{a_2}x_2^{b_2}, \ldots, x_1^{a_k}x_2^{b_k}) \subseteq \mathbb{K}[x_1,x_2]$ be any support-two monomial ideal with the base graph $K_2$, where $a_1 > a_2 > \ldots > a_k >0$ and $0 < b_1 < b_2 < \ldots < b_{k - 1} < b_k$. Then $I$ has linear resolution if and only if $a_{i+1}=a_i-1$ for all $1 \leq i \leq k-1$ and $b_{i+1}=b_i+1$ for all $1 \leq i \leq k-1$.  
\end{lemma}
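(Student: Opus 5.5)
Write $u_i=x_1^{a_i}x_2^{b_i}$. The plan rests on the explicit minimal free resolution of a monomial ideal in two variables, which in the setting of the statement is completely transparent. With the ordering $a_1>\cdots>a_k$ and $b_1<\cdots<b_k$, the ideal $I$ has linear quotients in the order $u_1,u_2,\ldots,u_k$. The colon ideal satisfies
\[
((u_1,\ldots,u_{i-1}):u_i)=(x_1^{a_{i-1}-a_i}),
\]
because for $j<i$ we have $(u_j:u_i)=(x_1^{a_j-a_i})$, and the smallest such power comes from $j=i-1$. Equivalently, the Hilbert–Burch (Taylor-type) resolution reads
\[
0\to \bigoplus_{i=1}^{k-1}R(-(a_i+b_{i+1}))\to \bigoplus_{i=1}^{k}R(-(a_i+b_i))\to I\to 0.
\]
This resolution is minimal, since the syzygy between $u_i$ and $u_{i+1}$ has coefficients $x_2^{b_{i+1}-b_i}$ and $x_1^{a_i-a_{i+1}}$, neither of which is a unit. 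Its degree $a_i+b_{i+1}=\deg\operatorname{lcm}(u_i,u_{i+1})$ in homological degree one therefore gives
\[
\reg(I)=\max\Bigl\{\max_i(a_i+b_i),\ \max_i(a_i+b_{i+1})-1\Bigr\}.
\]

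For the forward direction, assume $I$ has a linear resolution. Then $I$ is equigenerated, so $a_i+b_i=d$ for all $i$. Linearity forces every first syzygy degree to equal $d+1$, that is, $a_i+b_{i+1}=d+1$ for each $i$. Together with $a_{i+1}+b_{i+1}=d$ this gives $a_i-a_{i+1}=1$, and likewise $b_{i+1}-b_i=1$.

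For the converse, assume $a_{i+1}=a_i-1$ and $b_{i+1}=b_i+1$. Then all generators have degree $a_1+b_1$, and every colon ideal above equals $(x_1)$. Hence $I$ has linear quotients and is equigenerated, so $I$ has a linear resolution by \cite[Proposition 8.2.1]{hh11}. A second route for the converse is to write
\[
I=x_1^{a_k}x_2^{b_1}(x_1,x_2)^{k-1}
\]
and apply \Cref{prop:reg ci} together with \Cref{reg-sum}.

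The only delicate point is the minimality claim in the forward direction. It has to be justified that the syzygy degrees really are the numbers $a_i+b_{i+1}$ and that no cancellation occurs. I plan to handle this with the linear-quotient computation, which gives the Betti numbers directly: $\beta_{1}$ is concentrated in degrees $\deg u_i+(a_{i-1}-a_i)$, and these are the degrees listed above. This avoids any appeal to minimality of the Taylor complex.
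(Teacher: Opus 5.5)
Your proof is correct. It differs from the paper's mainly in the forward direction.

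For the converse, your second route is exactly what the paper does: it writes $I=x_1^{a_1-(k-1)}x_2^{d-a_1}(x_1,x_2)^{k-1}$ and applies \Cref{prop:reg ci}.

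For the forward direction, the paper computes no resolution. It assumes a gap $a_{i+1}\le a_i-2$, exhibits a monomial lying between $u_i$ and $u_{i+1}$ but outside $I$, and extracts $\reg(I)>d$. The clean form of that idea is the socle witness $x_1^{a_i-1}x_2^{b_{i+1}-1}\in(I:\mathfrak m)\setminus I$, which gives $\reg(I)\ge a_i+b_{i+1}-1=d+(a_i-a_{i+1})-1$. As printed, the paper's monomial has degree at most $d$, and its bound has the sign of $a_i-a_{i+1}$ reversed. Your computation therefore supplies a rigorous version of this step. It also gives the exact value $\reg(I)=\max_i(a_i+b_{i+1})-1$ for equigenerated $I$, at the cost of invoking the two-variable Hilbert--Burch resolution.

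One correction to your wording. $I$ does not have linear quotients in the order $u_1,\ldots,u_k$ unless it is already tight. The colon ideal $((u_1,\ldots,u_{i-1}):u_i)=(x_1^{a_{i-1}-a_i})$ is not generated by a variable once the gap exceeds one, so the Betti-number formula for linear quotients does not apply as stated. What you want is the iterated mapping cone for these principal colon ideals:
\begin{itemize}
\item the comparison map sends the new basis element to $x_2^{b_i-b_{i-1}}e_{i-1}$;
\item the resulting cone is precisely your displayed complex;
\item this construction is what proves exactness, which is the real content of your ``delicate point''.
\end{itemize}
Minimality then follows because all entries of the differential lie in $\mathfrak m$, as you already observed.
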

\begin{proof}
Note that $a_i+b_i=d$ for all $1 \leq i \leq k$. Suppose $a_{i+1}=a_i-1$ for all $1 \leq i \leq k-1$ and $b_{i+1}=b_i+1$ for all $1 \leq i \leq k-1$. Then $b_{i+1}=d-a_i+1$. Thus, we have $a_{i+1}=a_1-i$ for all $1 \leq i \leq k-1$. Therefore, we have 
\begin{align*}
    I=&(x_1^{a_1}x_2^{d-a_1}, x_1^{a_2}x_2^{d-a_2}, \ldots, x_1^{a_k}x_2^{d-a_{k}}) \\
    =& (x_1^{a_1}x_2^{d-a_1}, x_1^{a_1-1}x_2^{d-a_1+1}, \ldots, x_1^{a_1-(k-1)}x_2^{d-a_{1}+(k-1)}) \\
    =& x_1^{a_1-(k-1)}x_2^{d-a_1}(x_1^{k-1}, x_1^{k-2}x_2,\ldots, x_2^{k-1}) \\
    =& x_1^{a_1-(k-1)}x_2^{d-a_1} (x_1,x_2)^{k-1}
\end{align*}
Therefore by \Cref{prop:reg ci}, $\reg(I)=a_1-(k-1)+d-a_1+(k-2)+1=d$. This implies that $I$ has linear resolution. \\
Conversely, suppose $I$ has linear resolution. Then $\reg(I)=d$. If possible, suppose $a_{i+1} \leq a_i-2$. Consider the monomial $m=x_1^{a_{i+1}+1}x_2^{d-(a_i-1)}.$
If $m\in I$, then there exists $j$ such that $x_1^{a_j}x_2^{d-a_j}\mid m.$ Hence, $a_j\leq a_{i+1}+1$ and $a_j\geq a_i-1.$
Therefore, $a_i-1\leq a_j\leq a_{i+1}+1.$
Since $a_{i+1}\leq a_i-2$, we have $a_{i+1}+1\leq a_i-1,$
forcing $a_j=a_i-1$. This is impossible because there is no exponent
$a_j$ between $a_i$ and $a_{i+1}$. Hence
$x_1^{a_{i+1}+1}x_2^{d-(a_i-1)}\notin I.$ Then $x_1^{a_{i+1}+1}x_2^{d-(a_i-1)} \not \in I$. Thus, we have
$\reg(I) \geq \max\{d+a_{i+1}-a_i-1 \mid i=0,\ldots, k-1\}.$ This implies that $\reg(I) > d$ as $a_{i+1} \geq a_i+2$. This is a contradiction, as $\reg(I)=d$. Thus, $a_{i+1}=a_i-1$ and $b_{i+1}=d-a_{i+1}=d-a_i+1=b_i+1$.
\end{proof}

\begin{definition}{\rm
  Let $I=(x_1^{a_1}x_2^{b_1}, x_1^{a_2}x_2^{b_2}, \ldots, x_1^{a_k}x_2^{b_k}) \subseteq \mathbb{K}[x_1,x_2]$ be a monomial ideal with a base graph $K_2$, where $a_1 > a_2 > \ldots > a_k >0$ and $0 < b_1 < b_2 < \ldots < b_{k - 1} < b_k$. Then $I$ is said to be {\it tight} if $a_{i+1}=a_i-1$ for all $1 \leq i \leq k-1$ and $b_{i+1}=b_i+1$ for all $1 \leq i \leq k-1$. For any support-two monomial ideal $I$ with the base graph $G$, we call it tight if for each $e\in E(G)$, the ideal $I_{e}:=(u\in \G(I)\mid \supp(u)=e)$ is tight.
  }
\end{definition}

\begin{remark}{\rm
    In \Cref{fig-1}, the red coloured edge $\{x_1,x_2\}$ indicates that the deal $I$ has more than one minimal generators with support $\{x_1,x_2\}$. We use the same convention in \Cref{fig-2} and \ref{fig-3}.
    }
\end{remark}

\begin{lemma}\label{lem:p3}
 Let $I=(x_1^{a_1}x_2^{b_1},x_1^{a_1-1}x_2^{b_1+1},\ldots, x_1^{a_1-k}x_2^{b_1+k}, x_2^{c_1}x_3^{d_1}, x_2^{c_1-1}x_3^{d_1+1},\ldots, x_2^{c_1-l}x_3^{d_1+l})$ be a support-two monomial ideal in $R=\mathbb{K}[x_1,x_2,x_3]$ with the base graph $P_3$ and $k \geq 1$. If $I$ has a linear resolution, then $l=0$, $c_1=b_1+k$, $d_1=1$ and $a_1-k=1$, i.e. $I=(x_1^{k+1}x_2^{b_1},\ldots,x_1x_2^{b_1+k},x_2^{b_1+k}x_3)$.
\end{lemma}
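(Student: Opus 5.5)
The plan is to cut $I$ down to minimal support-two ideals with base graph $P_3$, working in the polarization, and then apply \Cref{star} with $k=1$ to each of them. Throughout, $I$ has a linear resolution, so it is equigenerated in degree $d=a_1+b_1=c_1+d_1$. Since $k\ge 1$ forces $a_1\ge 2$, we have $d\ge 3$, so every equigenerated sub-ideal of $I$ also has $\alpha>2$ and \Cref{star} applies to it. By \Cref{compntpolariz}, $I^{\PP}$ has a linear resolution. By \Cref{proplin}(1), deleting from $\G(I^{\PP})$ all generators divisible by a chosen polarized variable $x_{ij}$ keeps the linear resolution. After deleting, the surviving generators are exactly $\PP(u)$ for those $u\in\G(I)$ with exponent of $x_i$ less than $j$. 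So the result is $J^{\PP}$ for the corresponding sub-ideal $J$ of $I$, and $J$ has a linear resolution by \Cref{compntpolariz} again.

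\emph{Step 1 (the shape of the two ``extreme'' generators).} Delete using $x_{1,a_1-k+1}$ and $x_{3,d_1+1}$. The first removes every $\{x_1,x_2\}$-generator except $x_1^{a_1-k}x_2^{b_1+k}$. The second removes every $\{x_2,x_3\}$-generator except $x_2^{c_1}x_3^{d_1}$. What remains is $J=(x_1^{a_1-k}x_2^{b_1+k},\,x_2^{c_1}x_3^{d_1})$, a minimal support-two ideal with base graph $P_3$ and a linear resolution. \Cref{star} then gives $a_1-k=1$, $d_1=1$ and $b_1+k=c_1$. This is exactly the claimed form, apart from the claim $l=0$.

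\emph{Step 2 (ruling out $l\ge 1$).} Suppose $l\ge1$, so $x_2^{c_1-1}x_3^{2}\in\G(I)$ by Step 1 and the tight shape. The tempting sub-ideal is $(x_1x_2^{c_1},\,x_2^{c_1}x_3,\,x_2^{c_1-1}x_3^2)$, but it is $x_2^{c_1-1}(x_1x_2,x_2x_3,x_3^2)$. Its polarization is essentially a co-chordal edge ideal, and it is in fact linear, much as in \Cref{ex:lin1}. So that choice gives no contradiction, and the hypothesis $k\ge 1$ must be used here.

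Instead, use the generator $x_1^{2}x_2^{c_1-1}$, which exists because $k\ge 1$ and $a_1-k=1$. Delete with $x_{2,c_1}$: this kills $x_1x_2^{c_1}$ and $x_2^{c_1}x_3$, and keeps every generator with $x_2$-exponent at most $c_1-1$. Then delete with $x_{1,3}$ and $x_{3,3}$: these kill the $\{x_1,x_2\}$-generators with $x_1$-exponent at least $3$ and the $\{x_2,x_3\}$-generators with $x_3$-exponent at least $3$. The remainder is $J=(x_1^2x_2^{c_1-1},\,x_2^{c_1-1}x_3^2)$, which has a linear resolution. But \Cref{star} requires the exponent of $x_1$ to be $1$, which is a contradiction. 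Hence $l=0$, and $I=(x_1^{k+1}x_2^{b_1},\ldots,x_1x_2^{b_1+k},x_2^{b_1+k}x_3)$.

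The main obstacle is the bookkeeping in Step 2. A polarized variable $x_{2,j}$ can kill generators on both edges at once, so one has to check carefully that exactly the two intended generators survive the deletions. I will verify this by listing the $x_1$-, $x_2$- and $x_3$-exponents of all generators, which are explicit by the tight shape and Step 1.
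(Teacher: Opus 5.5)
Your proof is correct, but only half of it matches the paper's route.

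Your Step 1 is exactly the paper's second deletion (by $x_{1,(a_1-k+1)}$ and $x_{3,(d_1+1)}$), and it yields $a_1-k=1$, $d_1=1$, $c_1=b_1+k$.

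For $l=0$ the paper argues differently. It deletes by $x_{2,(c_1-l+1)}$ and $x_{1,(a_1-k+1)}$, claims the surviving ideal is $(x_1^{a_1-k}x_2^{b_1+k},\,x_2^{c_1-l}x_3^{d_1+l})$, and reads off $b_1+k=c_1-l$. That identification needs $b_1+k\le c_1-l$. Otherwise $x_{2,(c_1-l+1)}$ divides $\PP(x_1^{a_1-k}x_2^{b_1+k})$, that generator is killed, and only the principal ideal generated by $\PP(x_2^{c_1-l}x_3^{d_1+l})$ survives, which yields no condition on $l$. Since $c_1=b_1+k$, this failure occurs exactly when $l\ge 1$, i.e.\ in the very case to be ruled out.

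The paper's argument for $l=0$ also makes no use of $k\ge 1$. It would therefore equally ``prove'' $l=0$ for $k=0$. But there $(x_1x_2^2,x_2^2x_3,x_2x_3^2)=x_2(x_1x_2,x_2x_3,x_3^2)$ is linear with $l=1$, which is precisely the obstruction you flagged.

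Your Step 2 repairs this. You isolate the pair $x_1^2x_2^{c_1-1}$, $x_2^{c_1-1}x_3^2$ by deleting with $x_{2,c_1}$, $x_{1,3}$, $x_{3,3}$, and this is where $k\ge 1$ is genuinely used. Your bookkeeping checks out: $x_{2,c_1}$ kills only $x_1x_2^{c_1}$ and $x_2^{c_1}x_3$, and the other two variables kill only generators with $x_1$- or $x_3$-exponent at least $3$. So Step 2 is a fix of a gap in the paper's argument, not merely an alternative route.

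Two small points:
\begin{enumerate}
\item When $k=1$ (resp.\ $l=1$), the variable $x_{1,3}$ (resp.\ $x_{3,3}$) does not exist in $R^{\PP}$. In that case there is nothing to delete, so simply skip that step.
\item Instead of invoking \Cref{star} at the end, you can observe directly that $x_2^{c_1-1}(x_1^2,x_3^2)$ has regularity $c_1+2>c_1+1$.
\end{enumerate}
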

\begin{proof}
 Suppose that $I$ has a linear resolution. Observe that 
 $$(I^{\PP},x_{2 (c_1-l+1)},x_{1 a_1-(k-1)})=
\big((x_1^{a_1-k}x_2^{b_1+k},\, x_2^{c_1-l}x_3^{d_1+l})^{\PP}, x_{2 (c_1-l+1)},x_{1 a_1-(k-1)}\big)$$
and 
$$(I^{\PP},x_{3 d_1+1},x_{1 a_1-(k-1)})
=\big((x_1^{a_1-k}x_2^{b_1+k},\, x_2^{c_1}x_3^{d_1})^{\PP},x_{3 d_1+1},x_{1 a_1-(k-1)}\big).$$ 
Then, due to \Cref{compntpolariz} and \Cref{proplin}, the ideals $(x_1^{a_1-k}x_2^{b_1+k},\, x_2^{c_1-l}x_3^{d_1+l})$ and $(x_1^{a_1-k}x_2^{b_1+k},\, x_2^{c_1}x_3^{d_1})$ have linear resolutions. Hence by \Cref{equivalent}, we have
\[
a_1-k=1,\qquad b_1+k=c_1-l,\qquad d_1+l=1, b_1+k=c_1,\qquad d_1=1.
\]
Consequently, $l=0$, $c_1=b_1+k$, $d_1=1$ and $a_1-k=1$.
\end{proof}

\begin{lemma}\label{lem:p4}
 Let 
 $$I=(x_1^{a_1}x_2^{b_1}, \ldots, x_1^{a_1-k}x_2^{b_1+k}, x_2^{c_1}x_3^{d_1},\ldots, x_2^{c_1-l}x_3^{d_1+l}, x_3^{e_1}x_4^{f_1},\ldots, x_3^{e_1-t}x_4^{f_1+t})$$ 
 be a support-two monomial ideal with the base graph $P_4$. If $k \geq 1$, then $I$ has no linear resolution.   
\end{lemma}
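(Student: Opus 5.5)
Suppose, for contradiction, that $I$ has a linear resolution of degree $d$. The plan is to restrict to the $P_3$ subgraph on $\{x_1,x_2,x_3\}$, apply \Cref{lem:p3} to pin down the generators there, and then show that the remaining part of the ideal is incompatible with linearity.

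\textbf{Step 1: restrict to $\{x_1,x_2,x_3\}$.} Deleting all generators divisible by $x_4$ gives, by \Cref{proplin}(1), an ideal with a linear resolution. This ideal is
\[
(x_1^{a_1}x_2^{b_1},\ldots,x_1^{a_1-k}x_2^{b_1+k},\,x_2^{c_1}x_3^{d_1},\ldots,x_2^{c_1-l}x_3^{d_1+l}),
\]
whose base graph is $P_3$ and which has $k\geq 1$. By \Cref{lem:p3} we get $l=0$, $d_1=1$, $a_1-k=1$ and $c_1=b_1+k$. Hence the $\{x_2,x_3\}$-part is the single generator $x_2^{d-1}x_3$, and $d=b_1+k+1\geq 3$ because $b_1\geq 1$ and $k\geq 1$.

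\textbf{Step 2: reduce to the $P_3$ on $\{x_2,x_3,x_4\}$ with a minimal $\{x_2,x_3\}$-edge.} Next I would kill the extra $\{x_1,x_2\}$-generators so that only $x_1x_2^{d-1}$ remains from that edge. This uses the polarization trick from the proof of \Cref{lem:p3}. The ideal $I^{\PP}$ has a linear resolution by \Cref{compntpolariz}. Adding the variable $x_{1,2}$ removes every $\{x_1,x_2\}$-generator whose $x_1$-exponent is at least $2$, and leaves $x_1x_2^{d-1}$ and the other generators untouched. By \Cref{proplin}(1) (deleting generators divisible by $x_{1,2}$), the reduced ideal
\[
J=(x_1x_2^{d-1},\,x_2^{d-1}x_3,\,x_3^{e_1}x_4^{f_1},\ldots,x_3^{e_1-t}x_4^{f_1+t})
\]
has a linear resolution. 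Deleting $x_1$ from $J$ by \Cref{proplin}(1) then yields
\[
J'=(x_2^{d-1}x_3,\,x_3^{e_1}x_4^{f_1},\ldots,x_3^{e_1-t}x_4^{f_1+t}),
\]
with base graph $P_3$ centred at $x_3$.

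\textbf{Step 3: apply the minimal classification.} First handle the case $t\geq 1$. After relabelling $x_4\leftrightarrow x_1$ and $x_2\leftrightarrow x_3$, \Cref{lem:p3} applies to $J'$ and forces the other edge to be minimal. Its conclusion demands that the $x_2$-exponent of that single generator be $1$, so $d-1=1$, i.e.\ $d=2$. This contradicts $d\geq 3$.

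Now handle $t=0$. Then $J$ is a minimal support-two ideal with base graph $P_4$ and $\alpha(J)=d>2$, so \Cref{path} (with $k=1$) shows $J$ has no linear resolution, again a contradiction. Alternatively, \Cref{equivalent} forces the base graph of $J$ to be a star, which $P_4$ is not.

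\textbf{Main obstacle.} The delicate point is the polarization step in Step 2. One must check that adding $x_{1,2}$ cuts out exactly the non-minimal $\{x_1,x_2\}$-generators, and that \Cref{proplin} applies to the polarized ideal in the same way as in \Cref{lem:p3}. I would verify this by writing out $\PP(x_1^{a}x_2^{b})=x_{11}\cdots x_{1a}x_{21}\cdots x_{2b}$, which is divisible by $x_{12}$ if and only if $a\geq 2$.
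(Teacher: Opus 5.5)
Your proof is correct. It shares the paper's first step: delete the $x_4$-generators and invoke \Cref{lem:p3}, so that the $\{x_1,x_2,x_3\}$-part becomes $x_1^{k+1}x_2^{b_1},\ldots,x_1x_2^{d-1},x_2^{d-1}x_3$ with $d\geq 3$. After that you split on $t$, and the paper does not.

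The paper adds both $x_{12}$ and $x_{4(f_1+1)}$ to $I^{\PP}$. These delete exactly the extra $\{x_1,x_2\}$- and $\{x_3,x_4\}$-generators and nothing else. So for every $t$ one reaches the minimal ideal $(x_1x_2^{d-1},x_2^{d-1}x_3,x_3^{e_1}x_4^{f_1})$ with base graph $P_4$, which \Cref{equivalent} excludes. Your $t=0$ branch is exactly this argument, since there $x_{4(f_1+1)}$ would delete nothing.

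Your $t\geq 1$ branch instead applies \Cref{lem:p3} a second time, from the other end of the path. The two end-edges then force incompatible shapes $x_2^{d-1}x_3$ and $x_2x_3^{d-1}$ on the middle generator. This branch needs no polarization at all, because $J'$ is simply $I$ with its $x_1$-divisible generators removed. So the step you flagged as the main obstacle is only used when $t=0$.

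The paper's route gives uniformity and a single appeal to \Cref{lem:p3}. Yours gives a symmetric picture of the obstruction, at the cost of a case split and a second use of \Cref{lem:p3} in reversed orientation. If you want the $t\geq1$ branch to rest only on the two-generator case, add $x_{4(f_1+1)}$ to $(J')^{\PP}$. What remains is $(x_2^{d-1}x_3,x_3^{e_1}x_4^{f_1})$, which \Cref{star} excludes because $d-1\geq 2$. That is precisely the paper's extra step.
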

\begin{proof}
  If possible, suppose that $I$ has a linear resolution. Let us consider the ideal $I'=(u\in \G(I)\mid x_4\nmid u)$. Since $I$ has a linear resolution, $I'$ also has a linear resolution by \Cref{proplin}. Thus, applying \Cref{lem:p3} to the ideal $I'$, we get 
  $$I=(x_1^{k+1}x_2^{b_1},\ldots,x_1x_2^{b_1+k},x_2^{b_1+k}x_3, x_3^{e_1}x_4^{f_1},\ldots, x_3^{e_1-t}x_4^{f_1+t}).$$ 
  Now, one can observe that
  \[
  (I^{\PP},x_{12},x_{4(f_1+1)})=\big((x_1x_2^{b_1+k},\, x_2^{b_1+k}x_3, x_3^{e_1}x_4^{f_1})^{\PP}, x_{12},x_{4(f_1+1)}\big).
  \]
Due to \Cref{compntpolariz} and \Cref{proplin}, it follows that the ideal $(x_1x_2^{b_1+k},\, x_2^{b_1+k}x_3, x_3^{e_1}x_4^{f_1})$ has a linear resolution, which is a contradiction by \Cref{equivalent}. Therefore, $I$ has no linear resolution.
\end{proof}

\begin{lemma}\label{lem:whiskers}
Let $I=(x_1^{a_1}x_2^{b_1}, \ldots, x_1^{a_1-k}x_2^{b_1+k}, x_2^{c_1}x_3^{d_1},\ldots, x_2^{c_1-l}x_3^{d_1+l}, x_3^{e_1}x_4^{f_1},\ldots, x_3^{e_1-t}x_4^{f_1+t},\\ x_2^{g_1}x_4^{h_1},\ldots,x_2^{g_1-s}x_4^{h_1+s})$
be a support-two monomial ideal whose base graph is obtained from the complete graph $K_3$ by attaching a whisker at the vertex $x_2$. If $k\geq 1$, then $I$ has no linear resolution.
\end{lemma}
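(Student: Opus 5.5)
Suppose, for contradiction, that $I$ has a linear resolution, with common degree $d$. The plan mirrors \Cref{lem:p4}: delete a variable to land in a configuration already settled, then use a polarization trick to reach a minimal ideal that \Cref{equivalent} or \Cref{lem:triangle} forbids.

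First delete $x_4$. By \Cref{proplin}(1), the ideal $I'=(u\in\G(I)\mid x_4\nmid u)$ has a linear resolution. This ideal has base graph the path $x_1-x_2-x_3$ and $k\ge 1$, so \Cref{lem:p3} applies. It gives $l=0$, $c_1=b_1+k$, $d_1=1$ and $a_1-k=1$. Hence the generators supported on $\{x_1,x_2\}$ and $\{x_2,x_3\}$ are $x_1^{k+1}x_2^{b_1},\ldots,x_1x_2^{b_1+k}$ and $x_2^{b_1+k}x_3$, and the common degree is $d=b_1+k+1\ge 3$.

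Deleting $x_3$ instead leaves the path $x_1-x_2-x_4$ with $k\ge1$. \Cref{lem:p3} again gives $s=0$, $g_1=b_1+k$ and $h_1=1$, so the only generator on $\{x_2,x_4\}$ is $x_2^{b_1+k}x_4$. We may assume $I$ is tight on each edge, as the statement's parametrisation indicates; otherwise apply \Cref{lem:k2} after deleting the other variables.

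Now isolate the generators involving $x_1$ down to the single one $x_1x_2^{b_1+k}$. Polarize and add the variable $x_{12}$. Every generator $x_1^{a}x_2^{b}$ with $a\ge 2$ polarizes to a multiple of $x_{12}$, so
\[
(I^{\PP},x_{12})=(J^{\PP},x_{12}),\qquad J=(x_1x_2^{b_1+k},\,x_2^{b_1+k}x_3,\,x_2^{b_1+k}x_4,\,x_3^{e_1}x_4^{f_1},\ldots,x_3^{e_1-t}x_4^{f_1+t}).
\]
By \Cref{compntpolariz} and \Cref{proplin}(1), $J$ has a linear resolution.

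Next reduce the $\{x_3,x_4\}$ block to a single generator. If $t\ge1$, add $x_{4(f_1+1)}$ to the polarization, exactly as in \Cref{lem:p3}. This generator does not divide the polarizations of $x_2^{b_1+k}x_4$ or $x_1x_2^{b_1+k}$, since $f_1\ge 1$. It kills every $\{x_3,x_4\}$-generator except $x_3^{e_1}x_4^{f_1}$. This yields the minimal support-two ideal
\[
L=(x_1x_2^{b_1+k},\,x_2^{b_1+k}x_3,\,x_2^{b_1+k}x_4,\,x_3^{e_1}x_4^{f_1}),
\]
which has a linear resolution and satisfies $\alpha(L)=d>2$. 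Its base graph is $C_3$ with a whisker, which is not a star. This contradicts \Cref{equivalent}(1)$\Rightarrow$(2).

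Alternatively, one can delete $x_1$ from $L$ via \Cref{proplin}(1). This leaves a triangle ideal of degree $d>2$, which has no linear resolution by \Cref{lem:triangle}.

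The main obstacle is the second polarization step. We must check that adding $x_{4(f_1+1)}$ together with $x_{12}$ does not also swallow a generator we need to keep. The concern is $x_2^{b_1+k}x_4$, whose $x_4$-exponent $1$ is at most $f_1$, so it survives. The exponent bookkeeping that justifies $(I^{\PP},\cdot)=(L^{\PP},\cdot)$ must be verified carefully.

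If tightness of the $\{x_3,x_4\}$ block is not given, first apply \Cref{lem:k2} to the ideal obtained by deleting $x_1$ and $x_2$, which has a linear resolution by \Cref{proplin}(1).
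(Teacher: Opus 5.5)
Your proposal is correct and follows the paper's proof. You delete $x_3$ and $x_4$ separately and apply \Cref{lem:p3}, which forces $l=s=0$, $a_1-k=1$, $d_1=h_1=1$ and $c_1=g_1=b_1+k$. You then discard the polarized generators divisible by $x_{12}$ and $x_{4(f_1+1)}$, reaching the minimal ideal $(x_1x_2^{b_1+k},x_2^{b_1+k}x_3,x_2^{b_1+k}x_4,x_3^{e_1}x_4^{f_1})$ of degree $d\ge 3$, whose non-star base graph contradicts \Cref{equivalent}. Your divisibility check that $x_{4(f_1+1)}$ spares $x_2^{b_1+k}x_4$ and $x_3^{e_1}x_4^{f_1}$ is right, and so is your alternative finish via \Cref{lem:triangle} after deleting $x_1$.
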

\begin{proof}
Let us assume $I$ has a linear resolution. Then $I_1=(u\in \G(I)\mid x_3\nmid u)$ and $I_2=(u\in \G(I)\mid x_4\nmid u)$ both have linear resolution by \Cref{proplin}. Note that $I_1$ and $I_2$ are support-two monomial ideals of the form given in \Cref{lem:p3}, and thus, we have
$$I=(x_1^{k+1}x_2^{b_1},\ldots,x_1x_2^{b_1+k}, x_2^{b_1+k}x_3, x_2^{b_1+k}x_4, x_3^{e_1}x_4^{f_1},\ldots, x_3^{e_1-t}x_4^{f_1+t}).$$
Now, we consider the ideal $I'=(u\in\G(I^{\PP})\mid x_{12}\nmid u\text{ and } x_{4(f_1+1)}\nmid u)$. Then we see that $I'=(x_1x_2^{b_1+k}, x_2^{b_1+k}x_3, x_2^{b_1+k}x_4,x_3^{e_1}x_4^{f_1})^{\PP}$. Since $I$ has linear resolution, by \Cref{compntpolariz} and \Cref{proplin} it follows that the minimal support-two monomial ideal 
$$(x_1x_2^{b_1+k}, x_2^{b_1+k}x_3, x_2^{b_1+k}x_4,x_3^{e_1}x_4^{f_1})$$ 
has a linear resolution. This fact contradicts \Cref{equivalent}. Hence, $I$ has no linear resolution.
\end{proof}  

\begin{proposition}\label{Lem:k3}
Let $I$ be a support-two monomial ideal generated in degree $3$ with the base graph $K_3$. Then the following hold:

\begin{enumerate}
   \item[\rm(1)] The ideal $I=(x_1^2x_2,x_1x_2^2,x_2^2x_3,x_1^2x_3)$
    has a linear resolution.
    
    \item[\rm(2)] The ideal $I=(x_1^2x_2,x_1x_2^2,x_2^2x_3,x_2x_3^2,x_1^2x_3,x_1x_3^2)$
    does not have a linear resolution.

    \item[\rm(3)] The ideal $I=(x_1^2x_2,x_1x_2^2,x_2^2x_3,x_1^2x_3,x_1x_3^2)$
    does not have a linear resolution.

    \item[\rm(4)] The ideal $I=(x_1^2x_2,x_1x_2^2,x_2x_3^2,x_1x_3^2)$
    does not have a linear resolution.

    \item[\rm(5)]
    The ideal $I=(x_1^2x_2,x_1x_2^2,x_2^2x_3,x_1x_3^2)$
    does not have a linear resolution.

\end{enumerate}
\end{proposition}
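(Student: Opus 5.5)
For part (1) I would show that $I$ has linear quotients and then use that $I$ is equigenerated. For parts (2)--(5) I would produce a socle element of $R/I$ in degree $3$, which forces $\reg(I)\ge 4>3$.

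\emph{Part (1).} Order the generators as $x_1^2x_2<x_1x_2^2<x_1^2x_3<x_2^2x_3$ and compute the successive colon ideals directly.
\begin{itemize}
\item First, $(x_1^2x_2):x_1x_2^2=(x_1)$.
\item Next, $(x_1^2x_2,x_1x_2^2):x_1^2x_3=(x_2,x_2^2)=(x_2)$.
\item Finally, $(x_1^2x_2,x_1x_2^2,x_1^2x_3):x_2^2x_3=(x_1^2,x_1,x_1^2)=(x_1)$.
\end{itemize}
Each colon is generated by a variable, so $I$ has linear quotients. Since $I$ is generated in the single degree $3$, \cite[Proposition 8.2.1]{hh11} gives a linear resolution. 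This is the same conclusion reached at the end of the proof of \Cref{equivalent}.

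\emph{Parts (2)--(5).} I would use the local cohomology description of regularity from the definition in \Cref{secpreli}. If a monomial $w\notin I$ satisfies $(I:w)=\m$, then the image of $w$ is a nonzero socle element of $R/I$ of degree $\deg w$. It therefore lies in $H^0_{\m}(R/I)_{\deg w}$, so $\reg(R/I)\ge \deg w$ and $\reg(I)\ge \deg w+1$.

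Take $w=x_1x_2x_3$. In each of the ideals (2)--(5), every generator has some exponent equal to $2$, while $w$ is squarefree, so $w\notin I$. It remains to check that $x_iw\in I$ for $i=1,2,3$:
\begin{itemize}
\item In (2), $x_1w$, $x_2w$, $x_3w$ are divisible by $x_1^2x_2$, $x_1x_2^2$, $x_1x_3^2$ respectively.
\item In (3), the same three generators work, and all of them lie in (3).
\item In (4), $x_1w$, $x_2w$, $x_3w$ are divisible by $x_1^2x_2$, $x_1x_2^2$, $x_2x_3^2$.
\item In (5), $x_1w$, $x_2w$, $x_3w$ are divisible by $x_1^2x_2$, $x_1x_2^2$, $x_1x_3^2$.
\end{itemize}
Hence $(I:w)=\m$ in every case. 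This gives $\reg(I)\ge 4>3=\alpha(I)$, so none of these ideals has a linear resolution.

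\emph{Consistency check and main obstacle.} As a sanity check, in (1) the monomial $x_3w=x_1x_2x_3^2$ is divisible by none of $x_1^2x_2$, $x_1x_2^2$, $x_2^2x_3$, $x_1^2x_3$. So this obstruction correctly disappears exactly where linearity holds. The only step needing care is the socle-to-regularity implication: the socle element lies in $H^0_{\m}(R/I)$ in degree $3$, and the formula $\reg=\max\{i+j\}$ applied with $i=0$ gives the bound. Everything else is direct divisibility checking.
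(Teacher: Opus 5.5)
Your proof is correct, but it takes a different route from the paper.

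\textbf{The paper's route.} It handles all five parts by Betti splittings $I=J+K$ via \cite[Corollary 2.4]{fhv09}. It computes $J\cap K$ and then uses $\reg(I)=\max\{\reg(J),\reg(K),\reg(J\cap K)-1\}$.

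\textbf{Your route.} You use linear quotients for (1). For (2)--(5) you use the single socle witness $w=x_1x_2x_3$. This gives $H^0_{\m}(R/I)_3\neq 0$, equivalently $H^1_{\m}(I)_3\neq 0$ in the paper's own definition of regularity, and hence $\reg(I)\geq 4$.

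\textbf{What each buys.} Your argument is more elementary and uniform. It is also fully explicit for (3)--(5), where the paper only names the splittings and says the proof is similar. The paper's method gives the exact value $\reg(I)=4$, whereas you get only the lower bound, but that is all the statement needs.

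\textbf{Part (1).} Here your linear-quotient argument is what actually secures the result. The paper splits off $K=(x_1^2x_3,x_2^2x_3)$ and asserts that $K$ is linear by \Cref{equivalent}. However, $K=x_3(x_1^2,x_2^2)$ has the exponent $2$ on the leaves rather than on the centre, so $\reg(K)=1+3=4$. Thus \cite[Corollary 2.4]{fhv09} does not apply. In fact $J+K$ cannot be a Betti splitting at all: that would force $\reg(I)\geq\reg(K)=4$, contradicting the linearity you prove.

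\textbf{Minor point.} The identification $(I:w)=\m$ presumes $R=\mathbb{K}[x_1,x_2,x_3]$. If $R$ has extra variables, compute in that ring first; adjoining variables does not change regularity.
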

\begin{proof}
\underline{Proof of (1):} We can write $I=J + K$, where $J=(x_1^2x_2,x_1x_2^2)$ and $K=(x_1^2x_3,x_2^2x_3)$. Note that $J$ has a linear resolution by \Cref{lem:k2} and $K$ has a linear resolution by \Cref{equivalent}. Therefore, due to \cite[Corollary 2.4]{fhv09}, $I$ has a Betti splitting. Hence, we have $\reg(I)=\max\{\reg(J), \reg(K), \reg(J \cap K)-1\}$ by \cite[Corollary 2.2(1)]{fhv09}. Now, $J \cap K=x_1x_2x_3(x_1,x_2)$, which gives $\reg(J \cap K)=4$. Since $\reg(J)=\reg(K)=3$, we have $\reg(I)=3$. Thus, $I$ has a linear resolution.
\\
 \underline{Proof of (2):} Let us write $I=J + K$, where $J=(x_1^2x_2,x_1x_2^2,x_2^2x_3,x_1^2x_3)$ and $K=(x_1x_3^2,x_2x_3^2)$. Then by the Proof of (1), $J$ has a linear resolution, and by \Cref{equivalent}, $K$ has a linear resolution. Therefore, it follows from \cite[Corollary 2.4]{fhv09} that $I=J+K$ is a Betti splitting. Consequently, by \cite[Corollary 2.2(1)]{fhv09}, 
 $$\reg(I)=\max\{\reg(J), \reg(K), \reg(J \cap K)-1\}.$$
 Note that $J \cap K=(x_3^2(x_1^2,x_2^2))$, and thus, using \Cref{reg-sum} and \Cref{prop:reg ci}, we obtain $\reg(J \cap K)=5$. Since $\reg(J)=\reg(K)=3$, we have $\reg(I)=4$. Thus, $I$ has no linear resolution. \\
 One can similarly show that ideals of $(3), (4), (5)$ do not have linear resolution. Indeed, for the ideal $I$ mentioned in (3), one can take $J=(x_1^2x_2,x_1x_2^2,x_2^2x_3)$ and $K=(x_1^2x_3,x_1x_3^2)$; for the ideal $I$ mentioned in (4), one can take $J=(x_1^2x_2,x_1x_2^2)$ and $K=(x_2x_3^2,x_1x_3^2)$; for the ideal $I$ mentioned in (5), one can take $J=(x_1^2x_2,x_1x_2^2,x_2^2x_3)$ and $K=(x_1x_3^2)$.
\end{proof}

\begin{lemma}\label{prop:k3}
 Let $$I=(x_1^{a_1}x_2^{b_1},\ldots, x_1^{a_1-k}x_2^{b_1+k}, x_2^{c_1}x_3^{d_1}, \ldots,x_2^{c_1-l}x_3^{d_1+l},x_3^{e_1}x_1^{f_1},\ldots,x_3^{e_1-t}x_1^{f_1+t} )$$ be a support-two monomial ideal with the base graph $K_3$ and $k\geq 1$. Then $I$ has a linear resolution if and only if $l=t=0$, $d_1=e_1=1$, $b_1=a_1-k=1$ and $c_1=f_1=k+1$, i.e. $I=(x_1^{k+1}x_2,\ldots,x_1x_2^{k+1}, x_2^{k+1}x_3, x_1^{k+1}x_3)$.
\end{lemma}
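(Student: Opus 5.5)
For the "only if" direction, I will mimic \Cref{lem:p3}: first remove a variable, then polarize and kill suitable polarized variables to reach smaller ideals that must also be linear. Assume $I$ has a linear resolution, so $I$ is equigenerated in some degree $d$.

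\textbf{Step 1: restrict to the edges through $x_3$.} By \Cref{proplin}(1), the ideal $I_1=(u\in\G(I)\mid x_3\nmid u)$ is linear; it is the $x_1x_2$-part, so by \Cref{lem:k2} the given tight form is forced, which is consistent with the hypothesis. The ideal $I_2=(u\in\G(I)\mid x_1\nmid u)$ is the $x_2x_3$-part, and $I_3=(u\in\G(I)\mid x_2\nmid u)$ is the $x_3x_1$-part. These give no new information.

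\textbf{Step 2: apply \Cref{lem:p3} after polarizing.} The main step is to reduce to a $P_3$ configuration. Because $k\ge1$, killing the polarized variables $x_{3(d_1+l)}$ would remove only the $x_2x_3$-generators with maximal $x_3$-exponent, not all of them. So I will instead work with colon/killing: polarize $I$ and set to zero a variable $x_{1j}$ that divides every generator on the edge $\{x_3,x_1\}$ but none on $\{x_1,x_2\}$ beyond the top ones. This fails in general, so the cleaner route is to kill $x_{3 j}$ for $j=\min(d_1,e_1-t)$. That variable occurs in every generator with $x_3$ in its support, which just recovers $I_1$.

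A better choice is to kill $x_{1,a_1-k+1}$. This removes all $x_1x_2$-generators except $x_1^{a_1-k}x_2^{b_1+k}$, and it removes the $x_3x_1$-generators whose $x_1$-exponent exceeds $a_1-k$. Then analyze by cases on how $f_1$ compares with $a_1-k$.

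Alternatively, kill $x_{1,f_1+1}$ and $x_{1,a_1-k+1}$ when $f_1\ge a_1-k$. The survivors are $x_1^{a_1-k}x_2^{b_1+k}$, the whole $x_2x_3$ chain, and $x_3^{e_1}x_1^{f_1}$ only if $f_1\le a_1-k$.

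The cleanest plan is to combine the pieces instead. Kill $x_{2,c_1-l+1}$ and $x_{2,b_1+1}$ together with the corresponding $x_1$-variables. This yields minimal triangle ideals $(x_1^{p}x_2^{q},x_2^{r}x_3^{s},x_3^{u}x_1^{v})$, which cannot be linear by \Cref{lem:triangle}. Each such killed ideal with a nonzero triangle must therefore vanish, which forces the combinatorial constraints. Concretely, I expect to show:
\begin{itemize}
\item $l=0$ and $t=0$, since otherwise some selection gives a minimal triangle;
\item $d_1=e_1=1$, using \Cref{equivalent} on the star obtained from $x_3$'s two edges after killing the $x_1x_2$ chain partially, and using \Cref{lem:p3} applied to the $P_3$'s $x_1$–$x_2$–$x_3$ and $x_2$–$x_1$–$x_3$.
\end{itemize}
Indeed, after the $x_1x_2$ chain is killed except its endpoint, \Cref{lem:p3} applied at the centre $x_2$ gives $c_1=b_1+k$, $d_1=1$, $a_1-k=1$, $l=0$. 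Symmetrically, applied at the centre $x_1$ with the other endpoint, it gives $f_1=a_1=k+1$, $e_1=1$, $b_1=1$, $t=0$. The main obstacle is verifying that the polarized killing really isolates exactly a $P_3$ ideal of the form in \Cref{lem:p3}. This is the hardest bookkeeping: one must delete the $x_3x_1$ edge while keeping part of the $x_1x_2$ chain, and I would try killing $x_{3,1}$ only partially via $x_{1,\cdot}$-variables, checking the exponent inequalities case by case.

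\textbf{Step 3: the converse.} For $I=(x_1^{k+1}x_2,\ldots,x_1x_2^{k+1},x_2^{k+1}x_3,x_1^{k+1}x_3)$, I will argue as in \Cref{Lem:k3}(1). Write $I=J+K$ with $J=(x_1,x_2)^{k}x_1x_2$, which is linear by \Cref{lem:k2}, and $K=x_3(x_1^{k+1},x_2^{k+1})$. The summand $K$ is not linear, so instead take $K=(x_2^{k+1}x_3,x_1^{k+1}x_3)$ and add generators one at a time. A better approach is linear quotients: order the generators as the $x_1x_2$ chain (which has linear quotients), then $x_2^{k+1}x_3$ with colon $(x_1)$, then $x_1^{k+1}x_3$ with colon $(x_2)$, since $x_1^{k+1}x_2\in I$ and $x_2^{k+1}x_3\cdot x_1^{k+1}$ is divisible by it. Equigeneration then gives linearity by \cite[Proposition 8.2.1]{hh11}.
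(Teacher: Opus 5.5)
Your converse is correct. Linear quotients in the order (the $x_1x_2$-chain, then $x_2^{k+1}x_3$ with colon $(x_1)$, then $x_1^{k+1}x_3$ with colon $(x_2)$) together with equigeneration gives linearity. This is more self-contained than the paper's Betti splitting, which needs the linearity of $(x_1^{k+1}x_2,\ldots,x_1x_2^{k+1},x_2^{k+1}x_3)$; \Cref{lem:p3} only supplies necessary conditions for that. The gap is in the ``only if'' direction, which is never actually carried out. Step 2 cycles through candidate polarized variables to kill and ends with ``I expect to show'' and an admitted unresolved ``main obstacle''. Your $P_3$ step needs to delete \emph{all} generators on $\{x_3,x_1\}$ while keeping the $x_1x_2$-chain. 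You never exhibit variables that do this. Moreover, if only the endpoint of the chain survives, \Cref{lem:p3} does not apply (one would need \Cref{equivalent}). The claim that $l\ge 1$ or $t\ge1$ always produces a minimal triangle is likewise asserted, not proved.

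More seriously, the strategy cannot succeed as stated. Deleting the polarized generators divisible by some $x_{ij}$'s always leaves the subideal generated by those $u\in\G(I)$ with $\deg_{x_i}u<T_i$, for some thresholds $T_i$. Take $k=l=t=1$ and $d=3$, i.e.\ $I=(x_1^2x_2,x_1x_2^2,x_2^2x_3,x_2x_3^2,x_1^2x_3,x_1x_3^2)$. Every proper subideal of this type is linear: it is a tight chain, a single monomial, a star such as $x_3^2(x_1,x_2)$, or, after relabelling, the target ideal $(y_1^2y_2,y_1y_2^2,y_2^2y_3,y_1^2y_3)$. So no minimal triangle and no non-linear $P_3$ ever appears. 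Yet $I$ is not linear: $x_1x_2x_3\notin I$ but $x_ix_1x_2x_3\in I$ for every $i$, so $R/I$ has socle in degree $3$ and $\reg(I)\ge 4$. A genuine regularity computation is unavoidable here. This is why the paper treats $d=3$ separately via the five tight degree-$3$ configurations of \Cref{Lem:k3}, using Betti splittings. Only for $d\ge4$ does it argue by induction on $\mu(I)$:
\begin{itemize}
\item Killing $x_{2(b_1+1)}$ leaves a $K_3$- or $P_3$-ideal with fewer generators, handled by the induction hypothesis, \Cref{lem:triangle}, \Cref{lem:p3} or \Cref{equivalent}; this gives $b_1=1$ and $a_1=f_1+t$.
\item Symmetrically, killing $x_{1(a_1-k+1)}$ gives $a_1-k=1$ and $c_1=b_1+k$.
\item Finally, killing $x_{2(k+1)}$ gives $l=0$, and then $t=0$.
\end{itemize}
Your proposal has neither this base-case computation nor the induction that lets reductions land on smaller $K_3$-ideals.
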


\begin{proof}
Suppose that $I$ has a linear resolution. Then $I$ should be equigenerated in degree $d$ and $\reg(I)=d$. Thus, we have $a_1+b_1=c_1+d_1=e_1+f_1=d$. Since $k\geq 1$, we have $d\geq3$. For $d=3$, the desired conclusion follows from \Cref{Lem:k3} as the ideals listed in \Cref{Lem:k3} constitute all five possible (up to the permutation of variables) non-minimal tight support-two monomial ideals with the base graph $K_3$ and generated in degree $3$. Thus, we may assume $d \geq 4$.  Now, we proceed by mathematical induction on the number of minimal generators of $I$, that is, on $\mu(I)$. Also, we have $k \geq 1$. Then $x_1^{a_1-1}x_2^{b_1+1} \in \mathcal{G}(I)$. First, consider the following two cases.\\
\noindent\textbf{Case-I.} Let $c_1-l \leq b_1$. In this case, we take the monomial ideal $J$ such that $J^{\PP}=(u\in \G(I^{\PP})\mid x_{2(b_1+1)}\nmid u)$. Then one can verify that 
$$J=(x_1^{a_1}x_2^{b_1}, x_2^{c_1-l}x_3^{d_1+l}, x_2^{c_1-l+1}x_3^{d_1+l-1}, \ldots, x_2^{\min\{b_1,c_1\}}x_3^{\max\{a_1,d_1\}}, x_3^{e_1}x_1^{f_1},\ldots, x_3^{e_1-t}x_1^{f_1+t}).$$
Since $I$ has a linear resolution, we obtain by applying \Cref{compntpolariz} and \Cref{proplin} that $J$ also has a linear resolution. Now, $k\geq 1$ implies $\mu(J)<\mu(I)$. Since the base graph of $J$ is $K_3$, $J$ can not be a minimal support-two monomial ideal. Thus, by the induction hypothesis and the fact that $a_1>1$ (as $d\geq 4$ and $k\geq 1$), we have 
$$J=(x_1^{a_1}x_2^{b_1}, x_2^{c_1-l}x_3^{d_1+l},x_3^{e_1}x_1^{f_1}, \ldots, x_3^{e_!-t}x_1^{f_1+t}),$$
where $b_1=c_1-l=1$, $a_1=f_1+t$, $e_1=d_1+l$ and $t\geq 1$ (because of the fact that $J$ is not a minimal support-two monomial ideal).\\
\noindent \textbf{Case-II.} Let $c_1-l>b_1$. Again, we consider the monomial ideal $J$ as defined in Case-I. In this case, we see that
$$J=(x_1^{a_1}x_2^{b_1}, x_3^{e_1}x_1^{f_1}, \ldots, x_3^{e_1-t}x_1^{f_1+t}).$$
Note that $J$ has a linear resolution by \Cref{compntpolariz} and \Cref{proplin} as $I$ has so. Then, using \Cref{equivalent} (if $\mu(J)=\mu(\sqrt{J})$) and \Cref{lem:p3} (if $\mu(J)>\mu(\sqrt{J})$), we obtain that $b_1=1$, $a_1=f_1+t$, and $e_1-t=1$.\\
Combining both the above cases, we surely get $a_1=f_1+t$ and $b_1=1$. Similarly, considering the ideal $K$ such that $K^{\PP}=(u\in\G(I^{\PP})\mid x_{1(a_1-k+1)}\nmid u)$, one can obtain that $a_1-k=1$, $b_1+k=c_1$. Therefore, we have
$$I=(x_1^{k+1}x_2, \ldots,x_1x_2^{k+1}, x_2^{k+1}x_3,\ldots,x_2^{k+1-l}x_3^{l+1},x_3^{1+t}x_1^{k+1-t}, \ldots, x_3x_1^{k+1}).$$
Now, it remains to show that $t=l=0$. Note that $d\geq 4$ implies $k \geq 2$. Now, we consider the ideal $L$ such that $L^{\PP}=(u\in \G(I^{\PP})\mid x_{2(k+1)}\nmid u)$. Since $I$ has a linear resolution, $L$ also has a linear resolution by \Cref{compntpolariz} and \Cref{proplin}. Clearly, we have $\mu(L)<\mu(I)$. Since $k\geq 2$, by the induction hypothesis, the only choice of $L$ is $(x_1^{k+1}x_2, \ldots, x_1^2x_2^k,x_3x_1^{k+1}, \ldots,x_3^{1+t}x_1^{k+1-t})$, i.e. $l=0$. Now, $L$ is a non-minimal tight support-two monomial ideal with the base graph $P_3$. Thus, by \Cref{lem:p3}, we have $t=0$. 
%Then we have 
%%$L=(x_1^{k+1}x_2, \ldots, x_1^2x_2^k, x_2^kx_3^2,\ldots, x_2^{k+1-l}x_3^{1+l}, x_3^{1+t}x_1^{k+1-t}x_1^{k+1-t}, \ldots, x_3x_1^{k+1}).$
%Since $k \geq 2$, then by the induction hypothesis, we obtain that $l=0$ which gives $(I^{\PP},x_{2(k+1)})=(x_1^{k+1}x_2, \ldots, x_1^2x_2^k,x_3x_1^{k+1}, \ldots,x_3^{1+t}x_1{k+1-t})^{\PP}$ has linear resolution. Then by \Cref{lem:p3}, $t=0$.

Conversely, assume $l=0$, $t=0$, $d_1=e_1=1$, $b_1=a_1-k=1$ and $c_1=f_1=k+1$. Then $I=(x_1^{k+1}x_2,\ldots, x_1x_2^{k+1}, x_2^{k+1}x_3,x_3x_1^{k+1})$. 
We can write $I=J+K$, where $J=(x_1^{k+1}x_2,\ldots, x_1x_2^{k+1},x_2^{k+1}x_3)$ and $K=(x_3x_1^{k+1})$. By \Cref{lem:p3}, $J$ has a linear resolution, and $K$ has a linear resolution as it is generated by one monomial. Then by \cite[Corollary 2.4]{fhv09}, $I=J+K$ is a Betti splitting. Thanks to \cite[Corollary 2.2(1)]{fhv09}, we have $\reg(I)=\max\{\reg(J), \reg(K), \reg(J \cap K)-1\}$. Observe that $J \cap K=(x_1^{k+1}x_2x_3)$, which gives $\reg(J\cap K)=k+3$. Since $\reg(J)=\reg(K)=k+2$, we have $\reg(I)=k+2$. Hence, $I$ has a linear resolution.
\end{proof}

% \begin{lemma}
%  Let 
%  \begin{align*}
%  I=&(x_1^{a_1}x_2^{b_1},x_1^{a_1+1}x_2^{b_2-1},\ldots,x_1^{a_1+k}x_2^{b_2-k},x_2^{c_1}x_3^{d_1},x_2^{c_1+1}x_3^{d_1-1}, \ldots, x_2^{c_1+l}x_3^{d_1-l} ,x_3^{e_1}x_4^{f_1}, \\ &
%  x_3^{e_1+1}x_4^{f_1-1}, \ldots, x_3^{e_1+m}x_4^{f_1-m}, x_4^{g_1}x_1^{h_1},x_4^{g_1+1}x_1^{h_1-1},\ldots,x_4^{g_1+n}x_1^{h_1-n})
%  \end{align*}
%  be a support-two monomial ideal with the base graph $C_4$ and $k \geq 1$. If $I$ has linear resolution, then $l=0$, $c_1=b_1+k$, $d_1=1$ and $a_1-k=1$, i.e. $$I=(x_1^{k+1}x_2^{b_1},\ldots,x_1x_2^{b_1+k},x_2^{b_1+k}x_3,x_3x_4^{g_1+n},x_4^{g_1+n}x_1).$$
% \end{lemma}
% \begin{proof}
% Suppose $I$ has linear resolution. Then Note that 
% \end{proof}

\begin{figure}[ht]
    \centering

    % First row
    \basegraph{1/2,2/3,1/3,3/4}{$H_1$}
    \hspace{1.2cm}
    \basegraph{1/2,2/3,3/4,4/1}{$H_2$}
    \hspace{1.2cm}
    \basegraph{1/2,2/3,3/4,4/1,1/3}{$H_3$}

    \vspace{1.2cm}

    % Second row
    \basegraph{1/2,2/3,3/4,4/1,1/3,2/4}{$H_4$}
    \hspace{1.8cm}
    \basegraph{1/2,1/3,2/3,2/4,3/4}{$H_5$}

    \caption{Forbidden base graphs on the vertex set
    $\{x_1,x_2,x_3,x_4\}$.}
    \label{fig-2}
\end{figure}
%------------------------------------------------------------
\begin{lemma}\label{lem:no-LR}
Let $I$ be a tight support-two monomial ideal whose base graph is one of the
forms {$H_1$, $H_2$, $H_3$, $H_4$, $H_5$} in \Cref{fig-2}. Suppose
\[
  \bigl|\{u\in\G(I)\mid \supp(u)=\{x_1,x_2\}\}\bigr|\geq 2 .
\]
Then $I$ has no linear resolution.
\end{lemma}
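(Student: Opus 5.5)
Suppose, towards a contradiction, that $I$ has a linear resolution, equigenerated in degree $d$. Since $I$ is tight and has at least two generators with support $\{x_1,x_2\}$, we have $d\geq 3$. The strategy is to pass to subideals that \Cref{proplin} guarantees are still linear, and to let the earlier lemmas pin down their shape until a contradiction appears. Two operations are available. First, deleting all generators divisible by a variable $x_i$ is allowed directly by \Cref{proplin}. Second, deleting all generators divisible by a polarized variable $x_{ij}$ is allowed by \Cref{compntpolariz} together with \Cref{proplin}.

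\textbf{Restrictions by deleting a vertex.} In each $H_i$ the edge $\{x_1,x_2\}$ is red.
\begin{itemize}
\item Deleting a vertex other than $x_1,x_2$ leaves an induced subgraph through $\{x_1,x_2\}$, with the red edge intact.
\item In $H_1$, deleting $x_3$ gives $P_2$ plus an isolated vertex. Deleting $x_4$ instead gives $K_3$ with red edge $\{x_1,x_2\}$.
\item In $H_2$, $H_3$, $H_5$, deleting a suitable vertex gives an induced $P_3$ or $K_3$ containing $\{x_1,x_2\}$.
\end{itemize}
By \Cref{lem:p3} and \Cref{prop:k3}, the restriction of $I$ to such a triangle or path is forced into the shape $(x_1^{k+1}x_2,\ldots,x_1x_2^{k+1},x_2^{k+1}x_3,x_1^{k+1}x_3)$ or $(x_1^{k+1}x_2,\ldots,x_1x_2^{k+1},x_2^{k+1}x_3)$, possibly with the roles of $x_1,x_2$ swapped. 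Applying this to every such subgraph determines all generators on edges through $x_1$ or $x_2$:
\begin{itemize}
\item the only generator with support $\{x_2,x_j\}$ is $x_2^{k+1}x_j$, or $x_1^{k+1}x_j$ for $\{x_1,x_j\}$;
\item $d=k+2$;
\item the generators on $\{x_1,x_2\}$ run through all of $x_1^{k+1}x_2,\ldots,x_1x_2^{k+1}$.
\end{itemize}

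\textbf{Forcing a minimal forbidden subideal.} With this shape fixed, every graph $H_i$ contains an edge not incident to $\{x_1,x_2\}$ (here $\{x_3,x_4\}$), or a configuration in which $x_3$ and $x_4$ both attach to a common endpoint. Polarize and delete the generators divisible by $x_{12}$ and by $x_{22}$. This leaves, on the red edge, only the extreme generators $x_1^{k+1}x_2$ and $x_1x_2^{k+1}$; more cleverly, delete $x_{12}$ alone so that only $x_1x_2^{k+1}$ survives. Also delete the polarized variable $x_{4(f+1)}$, or the analogous one, so that only one generator on $\{x_3,x_4\}$ remains. The result is a linear \emph{minimal} support-two ideal of degree $k+2>2$ whose base graph is one of the following:
\begin{itemize}
\item in $H_1$, $H_3$, $H_4$, $H_5$: it still contains a triangle, or an induced $P_4$ or $C_4$;
\item in $H_2$: it is $C_4$, or a path $P_4$ after removing the $x_1x_2$ edge via $x_{11}$.
\end{itemize}
In every case the base graph is not a star, contradicting \Cref{equivalent}. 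This is exactly the template already used in \Cref{lem:p4} and \Cref{lem:whiskers}. For instance, in $H_2$ keep $x_1x_2^{k+1}$, $x_2^{k+1}x_3$, one generator on $\{x_3,x_4\}$, and $x_1^{k+1}x_4$. The base graph is then $C_4$, and \Cref{lem:fourcycle} gives the contradiction.

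\textbf{Main obstacle.} The hardest part is the bookkeeping. For each $H_i$ we must choose which polarized variables to delete so that the surviving generators form a minimal support-two ideal with non-star base graph, while verifying that the deletion does not also destroy edges we need. For example, deleting $x_{12}$ removes every generator containing $x_1^2$, including $x_1^{k+1}x_j$. So for graphs where $x_1$ has further neighbours ($H_2$ through $H_4$), I would instead delete $x_{22}$ and keep $x_1^{k+1}x_2$. I would handle the cases separately:
\begin{itemize}
\item $H_1$: reduce to a triangle with a whisker at $x_3$;
\item $H_2$ and $H_3$: reduce to a $C_4$ or a triangle;
\item $H_4$ and $H_5$: reduce to a triangle.
\end{itemize}
In every case \Cref{lem:triangle} or \Cref{lem:fourcycle} then excludes linearity.
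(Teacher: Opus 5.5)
Your first step is essentially the paper's, slightly extended. You delete $x_3$ or $x_4$ and use \Cref{lem:p3} and \Cref{prop:k3} to force three things: $A_{12}=\{x_1^{k+1}x_2,\ldots,x_1x_2^{k+1}\}$, a single generator $x_1^{k+1}x_j$ or $x_2^{k+1}x_j$ on every other edge at $x_1$ or $x_2$, and $d=k+2$.

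The gap is in the second step. This is where the lemma is actually decided, and you defer it as bookkeeping. Worse, the targets you name for $H_1$ and $H_2$ cannot be reached. For $H_1$ you aim at a minimal ideal on the triangle with a whisker at $x_3$. For $H_2$ you aim at a $C_4$, and your explicit example keeps both $x_1x_2^{k+1}$ and $x_1^{k+1}x_4$.

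To remove $x_1^{k+1}x_2$ you must delete a variable dividing $\PP(x_1^{k+1}x_2)$. If it is $x_{21}$, you also remove $x_1x_2^{k+1}$ and every $x_2^{k+1}x_j$. If it is some $x_{1j}$ with $j\le k+1$, you remove every $x_1^{k+1}x_j$. The same holds symmetrically for $x_1x_2^{k+1}$. Hence a lone surviving generator on $\{x_1,x_2\}$ can coexist with the pendant generators at only one of $x_1,x_2$. So no triangle or $4$-cycle through the red edge can ever be isolated.

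This obstruction is unavoidable. By \Cref{prop:k3}, the triangle part $(x_1^{k+1}x_2,\ldots,x_1x_2^{k+1},x_2^{k+1}x_3,x_1^{k+1}x_3)$ is itself linear. Therefore no deletion inside it can produce the non-linear minimal triangle of \Cref{lem:triangle}. Two smaller errors: deleting both $x_{12}$ and $x_{22}$ removes every generator on $\{x_1,x_2\}$, since each has an exponent $\ge 2$. And deleting $x_{11}$ in $H_2$ leaves a $P_3$, not a $P_4$.

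The rule in your last paragraph does work if you apply it uniformly. Delete $x_{22}$, and delete $x_{4(f+1)}$, where $x_3^ex_4^f$ is the generator on $\{x_3,x_4\}$ with smallest $x_4$-exponent. What survives is $x_1^{k+1}x_2$, the generators $x_1^{k+1}x_j$ for each neighbour $x_j\in\{x_3,x_4\}$ of $x_1$ (every $H_i$ has at least one), and $x_3^ex_4^f$. This is a minimal support-two ideal of degree $k+2>2$, linear by \Cref{compntpolariz} and \Cref{proplin}. Its base graph is a $P_4$ or a triangle with a pendant edge. It contains the disjoint edges $\{x_1,x_2\}$ and $\{x_3,x_4\}$, so it is not a star, contradicting \Cref{equivalent}.

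The paper avoids polarized deletions at this stage altogether: after the first step it deletes the actual variable $x_1$. For $H_1,H_2,H_3$ this leaves a $P_3$ centred at $x_3$ whose leaf $x_2$ carries exponent $k+1\ge 2$ in $x_2^{k+1}x_3$. \Cref{equivalent} and \Cref{lem:p3} rule this out. For $H_4,H_5$ it leaves a triangle on $\{x_2,x_3,x_4\}$. There \Cref{lem:triangle} and \Cref{prop:k3} force $\{x_2,x_4\}$ to be the edge with several generators. That is incompatible with the single generator $x_2^{k+1}x_4$ already forced on that edge by the restriction to $\{x_1,x_2,x_4\}$.
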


\begin{proof}
If possible, let $I$ have a linear resolution. For $1\le i\le 4$, we define the ideal
\[
  I_{x_i}:=\{u\in\G(I)\mid x_i\nmid u\}.
\]
By \Cref{proplin}, $I_{x_i}$ has a linear resolution for each $i$. Note that the ideal $I_{x_4}$ has the base graph either $K_3$ or $P_3$.
Then, applying \Cref{lem:p3} if the base graph of $I_{x_4}$ is $P_3$, or applying \Cref{prop:k3} if the base graph of $I_{x_4}$ is $K_3$, we get
\[
  A_{12}:=\{u\in\G(I)\mid \supp(u)=\{x_1,x_2\}\}
  =\{x_1^{k+1}x_2,\ \dots,\ x_1x_2^{k+1}\} \text{ for some } k\ge 1, \text{ and}
\]
\[A_{23}:=\{u\in\G(I)\mid \supp(u)=\{x_2,x_3\}\}=\{x_3x_2^{k+1}\}.\] 
Now, we consider two cases. 

\smallskip
\noindent\textbf{Case I:} Suppose that the base graph of $I$ is one of the forms
{$H_1, H_2, H_3$}.
Then the ideal $I_{x_1}$ has base graph $P_3$. Since $I_{x_1}$ has a linear
resolution and $A_{23}=\{x_3x_2^{k+1}\}$ with $k\ge1$, \Cref{equivalent} and \Cref{lem:p3} give a contradiction.\\
\noindent\textbf{Case II:} Suppose that the base graph of $I$ is one of the forms {$H_4$, $H_5$}. Then the ideal $I_{x_1}$ has base graph $K_3$. Since $I_{x_1}$ has a linear resolution and $A_{23}=\{x_3x_2^{k+1}\}$, by \Cref{equivalent} and \Cref{lem:p3} the only
possibility is
\[
  I_{x_1}=\{\,x_3x_2^{k+1},\ x_3x_4^{k+1},\ x_2^{k+1}x_4,\ \dots,\ x_2x_4^{k+1}\,\}.
\]
% NOTE: for the base graph (4) = K_4 the ideal I_{x_3} below would also contain the
% generators supported on {x_1,x_4}; the handwritten notes omit them.
Therefore, $I_{x_3}=\{\,x_1^{k+1}x_2,\ \dots,\ x_1x_2^{k+1},\ x_2^{k+1}x_4,\ \dots,\ x_2x_4^{k+1}\,\}.$ Since $I_{x_3}$ has a linear resolution with base graph $P_3$ and $k\ge1$, \Cref{lem:p3} gives a contradiction.\par
Hence, $I$ does not have any linear resolution.
\end{proof}

\begin{theorem}\label{thm:linearresolution}
Let $I$ be a non-minimal support-two monomial ideal, i.e. $\mu(I)>\mu(\sqrt{I})$. Then $I$ has a linear resolution if and only if $I$ is one of the following ideals (up to the permutation of variables)
\[
\begin{aligned}
\textup{(1)}\quad
I &=
(x_1^{a_1}x_2^{b_1}, \ldots, x_1^{a_1-k}x_2^{b_1+k});\\[1ex]
\textup{(2)}\quad
I &=
(x_1^{a_1}x_2^{b_1}, \ldots, x_1x_2^{b_1+k},
x_2^{b_1+k}x_3, \ldots, x_2^{b_1+k}x_n);\\[1ex]
\textup{(3)}\quad
I &=
(x_1^{k}x_2,\ldots,x_1x_2^{k},
x_3x_1^{k},\ldots,x_sx_1^{k},
x_{s+1}x_2^{k},\ldots,x_nx_2^{k});\\[1ex]
\textup{(4)}\quad
I &=
(x_1^{k}x_2,\ldots,x_1x_2^{k},
x_3x_1^{k},\ldots,x_nx_1^{k},
x_{3}x_2^{k},\ldots,x_nx_2^{k});\\[1ex]
\textup{(5)}\quad
I &=
(x_1^{k}x_2,\ldots,x_1x_2^{k},
x_3x_1^{k},\ldots,x_px_1^{k},
x_{p+1}x_1^{k},\ldots,x_nx_1^{k},\\
&\qquad\qquad\qquad\qquad\qquad\qquad\qquad\qquad\quad x_{p+1}x_2^{k},\ldots,x_nx_2^{k});\\[1ex]
\textup{(6)}\quad
I &=
(x_1^{k}x_2,\ldots,x_1x_2^{k},
x_3x_1^{k},\ldots,x_px_1^{k},
x_{p+1}x_1^{k},\ldots,x_sx_1^{k},\\
&\qquad\qquad\qquad\qquad\qquad
x_{p+1}x_2^{k},\ldots,x_sx_2^{k},
x_{s+1}x_2^{k},\ldots,x_nx_2^{k}).
\end{aligned}
\]
\end{theorem}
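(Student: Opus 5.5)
We prove both directions, starting with necessity, which carries the weight; sufficiency is a regularity computation.

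\textbf{Necessity, first reductions.} Assume $I$ is non-minimal with a linear resolution. Then $I$ is equigenerated, say in degree $d$. By \Cref{thmcochordal}, $G_I$ is co-chordal. For each edge $e$, the ideal $I_e$ is obtained from $I$ by deleting the generators divisible by the variables outside $e$, so it is linear by \Cref{proplin}. Hence \Cref{lem:k2} makes $I$ tight. Choose an edge, say $\{x_1,x_2\}$, carrying at least two generators; it exists by non-minimality.

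\textbf{Necessity, local structure.} Every induced subgraph $H$ of $G_I$ containing this edge gives a linear ideal $I_{[H]}$, again by \Cref{proplin}. The forbidden-structure lemmas then cut down $G_I$:
\begin{enumerate}
\item \Cref{lem:p4} and \Cref{lem:whiskers} exclude induced $P_4$ starting at $\{x_1,x_2\}$ and $K_3$ with a whisker at $x_2$.
\item \Cref{lem:no-LR} excludes $H_1,\dots,H_5$ containing the edge.
\end{enumerate}
Consequently every other vertex $x_j$ with $j\ge3$ is adjacent to $x_1$ or $x_2$, and there are no edges among $x_3,\dots,x_n$. Next, \Cref{lem:p3} applied to each induced $P_3$ through $\{x_1,x_2\}$ fixes the shape of the generators:
\begin{enumerate}
\item an edge $\{x_2,x_j\}$ carries the single generator $x_2^{b_1+k}x_j$;
\item the $\{x_1,x_2\}$-block ends at $x_1x_2^{b_1+k}$.
\end{enumerate}
If both $x_1$ and $x_2$ have outside neighbours, we apply \Cref{lem:p3} from both ends, with \Cref{prop:k3} handling any triangle. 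This forces $a_1=k$ and $b_1=1$ (after reindexing $k$), so the block is $x_1^kx_2,\dots,x_1x_2^k$ and the outside generators are $x_jx_1^k$ or $x_jx_2^k$. Listing which of the vertices $x_j$ ($j\ge3$) are adjacent to $x_1$ only, to $x_2$ only, or to both gives cases (3)--(6). Cases (1) and (2) are the situations where no outside vertex, or only neighbours of $x_2$, occur.

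\textbf{Remaining necessity checks.} Two further points need care:
\begin{enumerate}
\item Only one edge may carry several generators. An edge $\{x_2,x_j\}$ with several generators, seen inside an induced $P_3$ or $K_3$, contradicts \Cref{lem:p3} or \Cref{prop:k3}. An edge $\{x_i,x_j\}$ with $i,j\ge3$ does not exist, by the previous step.
\item Pairs $x_i,x_j$ with $i,j\ge3$ must be nonadjacent. For this we combine the excluded graphs $H_1,\dots,H_5$ with co-chordality.
\end{enumerate}

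\textbf{Sufficiency.} For each listed ideal we show $\reg(I)=d$. Write $I=J+K$ with $J$ the tight $\{x_1,x_2\}$-block together with the $x_1^k$-multiples, and $K=x_2^k(x_{p+1},\dots,x_n)$ (or the analogous split). Both $J$ and $K$ are linear, so \cite[Corollary 2.4]{fhv09} gives a Betti splitting and $\reg(I)=\max\{\reg J,\reg K,\reg(J\cap K)-1\}$. Then we compute $J\cap K$ as a monomial times a linearly presented ideal, as in the proof of \Cref{prop:k3}, and check that $\reg(J\cap K)=d+1$. An alternative is to exhibit linear quotients in the order: the block, then the $x_1^k$-terms, then the $x_2^k$-terms.

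\textbf{Main obstacle.} The hardest step is the global assembly in the necessity direction. There we must show that every vertex outside $\{x_1,x_2\}$ is forced into the star-like configurations (3)--(6), and that the exponents are uniform in $k$. I would try induction on $n$, deleting a vertex $x_j$ with $j\ge3$ and using \Cref{proplin} to reduce to a smaller case of the classification.
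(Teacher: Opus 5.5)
Your proposal is correct and follows essentially the same route as the paper. Necessity restricts via \Cref{proplin} to small induced configurations through the multi-generator edge and invokes \Cref{lem:k2}, \Cref{lem:p3}, \Cref{lem:p4}, \Cref{lem:whiskers}, \Cref{prop:k3} and \Cref{lem:no-LR}; sufficiency uses the same Betti splitting $I=J+K$ with $J\cap K=x_1x_2^k(x_{p+1},\dots,x_n)$, and your linear-quotient order also works. Your explicit graph assembly is what the paper leaves implicit: connectivity comes from co-chordality, every $x_j$ is adjacent to $x_1$ or $x_2$ by excluding $P_4$ and $H_1$, and $x_3,\dots,x_n$ are pairwise nonadjacent by the whisker lemma in both orientations together with $H_2$--$H_5$ (not by co-chordality, as your second remaining check says). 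So the induction on $n$ you flag as the main obstacle is unnecessary.
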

\begin{proof}
Suppose $I$ has a linear resolution. Then, by \Cref{proplin}(1), $I_{[H]}$ has a linear resolution, where $H$ is an induced subgraph of $G_I$ and $I_{[H]}=\left(f\in\G(I)\mid \supp(f)\in E(H)\right)$. Since $I$ is equigenerated and $\alpha(I)>2$, we have $\alpha(I_{[H]})>2$. Given that $I$ is not a minimal support-two monomial ideal, i.e. $\mu(I)>\mu(\sqrt{I})$. Then without loss of generality, we can assume $|\{u\in \G(I)\mid \supp(u)=\{x_1,x_2\}\}|\geq 2$. Also, due to \Cref{proplin}(1) and \Cref{lem:k2}, the ideal $I$ should be a tight support-two monomial ideal. Hence, using \Cref{lem:k2}, \ref{lem:p3}, \ref{lem:p4}, \ref{lem:whiskers}, \ref{prop:k3}, \ref{lem:no-LR}, we can conclude that $I$ is one of the forms (up to the permutation of the variables) (1), (2), (3), (4), (5), (6); see \Cref{fig-3} for the corresponding base graphs.\par 

Conversely, let $I$ be the ideal of the form (1) to (6). Now, observe that the ideals of the form (1) to (5) can be obtained from the ideal of the form (6) by removing all minimal generators divisible by some variables. Thus, due to \Cref{proplin}(1), it is enough to show that the ideal $I$ of the form given in (6) has a linear resolution. First, we split the $I$ as $I=J+K$, where
\begin{align*}
    J=\{x_1^{k}x_2, \dots, x_1x_2^{k},\ x_3x_1^{k}, \dots, x_sx_1^{k}\}, \text{ and }
    K=\{x_{p+1}x_2^{k}, \dots, x_nx_2^{k}\}.
\end{align*}
Now, we have
\[
  J:x_1=\{x_1^{k-1}x_2, \dots, x_1x_2^{k-1}, x_2^{k}, x_3x_1^{k-1}, \dots, x_sx_1^{k-1}\},
\]
and
\begin{align*}
  (J:x_1):x_2&=\{\,x_1^{k-1},\ x_1^{k-2}x_2,\ \dots,\ x_1x_2^{k-2},\ x_2^{k-1}\,\}\\
  &= (x_1,x_2)^{k-1}.
\end{align*}
Note that $k\geq 2$ as $\alpha(I)>2$. Therefore, by \Cref{prop:reg ci}, $\reg((J:x_1):x_2)=k-1$. Again, note that $((J:x_1),x_2)=(x_3x_1^{k-1},\ldots,x_sx_1^{k-1},x_2)$, which gives $\reg((J:x_1),x_2)=k$ by \Cref{reg-sum} and \Cref{equivalent}. Hence, considering the exact sequence
\[0 \rightarrow R/((J:x_1):x_2)(-1) \rightarrow R/(J:x_1) \rightarrow R/((J:x_1),x_2) \rightarrow 0.\]
and using \Cref{lem:regularity-lemma}, we conclude that $\reg(J:x_1)=k$. Since $(J,x_1)=(x_1)$, we have $\reg(J,x_1)=1$. Thus, applying \Cref{lem:regularity-lemma} to the following exact sequence
\[0 \rightarrow R/(J:x_1)(-1) \rightarrow R/J \rightarrow R/(J,x_1), \rightarrow 0,\]
we obtain that $\reg(J)=k+1$. Consequently, $J$ has a linear resolution. Also, it is clear from \Cref{equivalent} that the ideal $K$ has a linear resolution. Therefore, $I=J+K$ is a Betti splitting by \cite[Corollary 2.4]{fhv09}. Now, one can easily compute that
\[ 
J\cap K=x_1(x_{p+1}x_2^{k},\ \dots,\ x_nx_2^{k}).
\]
Thus, by \cref{reg-sum} and \Cref{equivalent}, we have $\reg(J\cap K)=k+2$.
Consequently, by \cite[Corollary 2.2(1)]{fhv09}, we have
\begin{align*}
  \reg(I)&=\max\{\reg(J), \reg(K), \reg(J \cap K)-1\}\\
  &=\{k+1,k+1,(k+2)-1\}\\
  &=k+1.
\end{align*}
Hence, $I$ has a linear resolution as $I$ is equigenerated in degree $k+1$.
\end{proof}

\begin{figure}[htbp]
\centering
\begin{tikzpicture}[
    scale=1,
    transform shape,
    vtx/.style={
        circle,
        fill=black,
        inner sep=0pt,
        minimum size=2.5mm
    },
    ed/.style={thick},
    dots/.style={
        inner sep=0pt,
        font=\normalsize
    },
     lbl/.style={font=\small},
    red edge/.style={red, very thick}
]

% =========================================================
% G_1
% =========================================================
\begin{scope}[xshift=-5.2cm,yshift=4.0cm]
\node[vtx,label=above:$x_1$] (x1) at (0,0) {};
\node[vtx,label=above:$x_2$] (x2) at (25mm,0) {};
\draw[red edge] (x1)--(x2);
\node at (12.5mm,-8mm) {$G_1$};
\end{scope}

% =========================================================
% G_2
% =========================================================
\begin{scope}[xshift=1.8cm,yshift=4.0cm]
\node[vtx,label=above:$x_1$] (x1) at (0,0) {};
\node[vtx,label=above:$x_2$] (x2) at (18mm,0) {};
\draw[red edge] (x1)--(x2);

\node[vtx,label=below left:$x_3$]  (x3)  at (9mm,-15mm) {};
\node[vtx,label=below:$x_4$]       (x4)  at (12mm,-15mm) {};
\node[dots] at (18mm,-15mm) {$\cdots$};
\node[vtx,label=below:$x_{n-1}$] (xn1) at (25mm,-15mm) {};
\node[vtx,label=below right:$x_n$] (xn) at (32mm,-15mm) {};

\foreach \v in {x3,x4,xn1,xn}
    \draw[ed] (x2)--(\v);

\node at (16mm,-25mm) {$G_2$};
\end{scope}

% =========================================================
% G_3
% =========================================================
\begin{scope}[xshift=-5.2cm,yshift=0cm]
\node[vtx,label=above:$x_1$] (x1) at (0,0) {};
\node[vtx,label=above:$x_2$] (x2) at (30mm,0) {};
\draw[red edge] (x1)--(x2);

\node[vtx,label=below:$x_3$] (x3) at (-12mm,-15mm) {};
\node[dots] at (-2mm,-15mm) {$\cdots$};
\node[vtx,label=below:$x_p$] (xp) at (8mm,-15mm) {};

\foreach \v in {x3,xp}
    \draw[ed] (x1)--(\v);

\node[vtx,label=below:$x_{p+1}$] (xp1) at (22mm,-15mm) {};
\node[dots] at (30mm,-15mm) {$\cdots$};
\node[vtx,label=below:$x_n$] (xn) at (42mm,-15mm) {};

\foreach \v in {xp1,xn}
    \draw[ed] (x2)--(\v);

\node at (15mm,-25mm) {$G_3$};
\end{scope}

% =========================================================
% G_4
% =========================================================
\begin{scope}[xshift=1.8cm,yshift=0cm]
\node[vtx,label=above:$x_1$] (x1) at (0,0) {};
\node[vtx,label=above:$x_2$] (x2) at (28mm,0) {};
\draw[red edge] (x1)--(x2);

\node[vtx,label=below:$x_3$] (x3) at (7mm,-18mm) {};
\node[dots] at (14mm,-18mm) {$\cdots$};
\node[vtx,label=below:$x_n$] (xn) at (24mm,-18mm) {};

\foreach \v in {x3,xn}{
    \draw[ed] (x1)--(\v);
    \draw[ed] (x2)--(\v);
}

\node at (14mm,-28mm) {$G_4$};
\end{scope}

% =========================================================
% G_5
% =========================================================
\begin{scope}[xshift=-5.2cm,yshift=-4.0cm]
\node[vtx,label=above:$x_1$] (x1) at (0,0) {};
\node[vtx,label=above:$x_2$] (x2) at (30mm,0) {};
\draw[red edge] (x1)--(x2);

\node[vtx,label=below:$x_3$] (x3) at (-12mm,-17mm) {};
\node[dots] at (-3mm,-17mm) {$\cdots$};
\node[vtx,label=below:$x_p$] (xp) at (7mm,-17mm) {};

\foreach \v in {x3,xp}
    \draw[ed] (x1)--(\v);

\node[vtx,label=below:$x_{p+1}$] (xp1) at (17mm,-18mm) {};
\node[dots] at (24mm,-18mm) {$\cdots$};
\node[vtx,label=below:$x_n$] (xn) at (34mm,-18mm) {};

\foreach \v in {xp1,xn}{
    \draw[ed] (x1)--(\v);
    \draw[ed] (x2)--(\v);
}

\node at (16mm,-29mm) {$G_5$};
\end{scope}

% =========================================================
% G_6
% =========================================================
\begin{scope}[xshift=1.8cm,yshift=-4.0cm]
\node[vtx,label=above:$x_1$] (x1) at (0,0) {};
\node[vtx,label=above:$x_2$] (x2) at (30mm,0) {};
\draw[red edge] (x1)--(x2);

\node[vtx,label=below:$x_3$] (x3) at (-12mm,-17mm) {};
\node[dots] at (-3mm,-17mm) {$\cdots$};
\node[vtx,label=below:$x_p$] (xp) at (7mm,-17mm) {};

\foreach \v in {x3,xp}
    \draw[ed] (x1)--(\v);

\node[vtx,label=below:$x_{p+1}$] (xp1) at (16mm,-18mm) {};
\node[dots] at (23mm,-18mm) {$\cdots$};
\node[vtx,label=below:$x_s$] (xs) at (29mm,-18mm) {};

\foreach \v in {xp1,xs}{
    \draw[ed] (x1)--(\v);
    \draw[ed] (x2)--(\v);
}

\node[vtx,label=below:$x_{s+1}$] (xs1) at (39mm,-17mm) {};
\node[dots] at (47mm,-17mm) {$\cdots$};
\node[vtx,label=below:$x_n$] (xn) at (56mm,-17mm) {};

\foreach \v in {xs1,xn}
    \draw[ed] (x2)--(\v);

\node at (28mm,-29mm) {$G_6$};
\end{scope}

\end{tikzpicture}
\caption{Base graphs of linear non-minimal support-two monomial ideals}
\label{fig-3}
\end{figure}

\subsection*{Acknowledgements} The second author is partially supported by the Anusandhan National Research Foundation (ANRF), Government of India, under the ARG-MATRICS Grant No. ANRF/ARGM/2025/002203/MTR.

\bibliographystyle{plain}

\bibliography{reference}
\end{document}